\numberwithin{equation}{section}
\title{\bf The Optimal Control Problem of Stochastic Differential System with Extended Mixed Delays and Applications
	\thanks{This work is financially supported by the National Natural Science Foundations of China (12471419, 12271304), and the Shandong Provincial Natural Science Foundation (ZR2024ZD35).}}
\author{\normalsize
	Xinpo Li\thanks{\it Research Center for Mathematics and Interdisciplinary Sciences, Shandong University, Qingdao 266237, P.R. China; and School of Mathematics, Shandong University, Jinan 250100, P.R. China, E-mail: lixinpo@mail.sdu.edu.cn},
	\ Jingtao Shi\thanks{\it Corresponding author, School of Mathematics, Shandong University, Jinan 250100, P.R. China, E-mail: shijingtao@sdu.edu.cn}}
\date{}
\newtheorem{mypro}{Proposition}[section]
\newtheorem{mythm}{Theorem}[section]
\newtheorem{mylem}{Lemma}[section]
\newtheorem{Remark}{Remark}[section]
\begin{document}
	
	\maketitle
	
	\noindent{\bf Abstract:}\quad
	This paper investigates an optimal control problem where the system is described by a stochastic differential equation with extended mixed delays that contain point delay, extended distributed delay, and extended noisy memory. The model is general in that the extended mixed delays of the state variable and control variable are components of all the coefficients, in particular, the diffusion term and the terminal cost. To address the difficulties induced by the extended noisy memory, by stochastic Fubini theorem, we transform the delay variational equation into a Volterra integral equation without delay, and then a kind of backward stochastic Volterra integral equation with Malliavin derivatives is introduced by the developed coefficient decomposition method and the generalized duality principle. Therefore, the stochastic maximum principle and the verification theorem are established. Subsequently, with Clark-Ocone formula, the adjoint equation is expressed as a set of anticipated backward stochastic differential equations. Finally, a nonzero-sum stochastic differential game with extended mixed delays and a linear-quadratic solvable example are discussed, as applications.
	
	\vspace{2mm}
	
	\noindent{\bf Keywords:}\quad Extended noisy memory, Malliavin calculus, stochastic maximum principle, backward stochastic Volterra integral equation, anticipated backward stochastic differential equation
	
	\vspace{2mm}
	
	\noindent{\bf Mathematics Subject Classification:}\quad 93E20, 60H20, 34K50
	
	\section{Introduction}
	
	Stochastic control systems have demonstrated remarkable effectiveness and accuracy in modeling complex phenomena arising in economic finance, public health, social management, information networks, aerospace, artificial intelligence and so on. Consequently, the study of optimal control problems for stochastic systems has consistently garnered significant and sustained interest from the academic community over the past several decades. In 1965, Kusher \cite{Kusher65} pioneered the examination of stochastic optimal control problems where the diffusion term is independent of both state variables and control variables. This issue subsequently triggered a sequence of intensive research developments. Specifically, Bismut \cite{Bimust78} and Bensoussan \cite{Bensoussan81} subsequently established stochastic maximum principles by introducing adjoint processes and applying Girsanov transformation, respectively. However, their results relied on restrictive assumptions, the convexity of the control domain or the degeneracy of the diffusion coefficient when control domain is non-convex. Until 1990, Peng \cite{PSG90} innovatively proposed the second-order variational method and introduced the \emph{backward stochastic differential equations} (BSDEs, in short) as the adjoint equation to present the global maximum principle, in which the theory of nonlinear BSDEs was established in \cite{PP90}. Yong and Zhou \cite{YZ99} systematically reviewed these context.
	
	The systems investigated in the aforementioned studies depend solely on the current state variable and control variable. Nonetheless, in reality, such as signal transmission in communication networks as well as the propagation of light and sound in physical systems, time delay effects are inherently present. In mathematics, this phenomenon are referred to as historical dependency and are commonly modeled by \emph{stochastic delay differential equations} (SDDEs, in short). Mohammed \cite{Mohammed84,Mohammed98} and Mao\cite{MXR08} have conducted a comprehensive and methodical review of the subject for interested readers. Owing to the prevalence of delay effects in practical systems, the examination of stochastic optimal control for delayed systems is of considerable practical significance and profound theoretical value.   
	
	In the meanwhile, there are some difficulties in studying the optimal control problem of delay systems, not only because there is a lack of It\^o's formula to deal with delay terms so far, but also because the \emph{dynamic programming principle} (DPP, in short) of general delay problems essentially gives rise to an infinite dimensional \emph{Hamilton-Jacobi-Bellman} (HJB, in short) equation, which poses significant challenges in its solution.
	
	Alternatively, stochastic maximum principle provides a framework for treating optimal control problems in stochastic delay systems. \O ksendal and Sulem \cite{OS00} investigated a type of stochastic delay systems that contained the average value of the previous duration, referred to as the \emph{distributed delay}, in addition to the present value. To overcome the infinite dimensional difficulties induced by the delay term, a zero value \emph{backward ordinary differential equation} (BODE, in short) was introduced as part of adjoint equation based on its special structure, while the other part is a BSDE in this research, and then the sufficient maximum principle was derived. In 2010, Chen and Wu \cite{CW10} firstly gained stochastic maximum principle for system with the point delays of state variables and control variables by introducing \emph{anticipated backward stochastic differential equations} (ABSDEs, in short) of Peng and Yang \cite{PY09}'s type, as the adjoint equation, where a duality principle between ABSDEs and SDDEs was developed. In 2021, Meng and Shi \cite{MS21} introduced three ABSDEs as adjoint equation through the second-order Taylor expansion, resulting in the general maximum principle that stochastic control systems contained the point delays of state variables and control variables. Please review the literature \cite{ZLX20, WW15, LWW20, WW17, YZY12, MS15, ZF21, YY25, AHOP13, CY15, DNW23, WZLF24, WZX21, ZX17} and the references therein for further relevant developments about point delay and distributed delay. Moreover, it is frequently utilized as the primary tool for obtaining the stochastic maximum principle of both point delay and distributed delay systems.
	\O ksendal and Agram \cite{AO19-2,AO19-3}, Feng et al. \cite{FGWX24} introduced the ABSDEs with dual operator to overcome the challenges posed by \emph{elephant memory}. Han and Li \cite{HL24} studied the stochastic optimal control problem for control systems with \emph{time-varying delay} by introducing a type of ABSDEs. Guo et al. \cite{GXZ24} studied the stochastic maximum principle for a \emph{generalized mean-field delay} control problem, in which the state equation depended on the distribution. Moreover, they established the existence and uniqueness of solutions to the adjoint equation, formulated as a \emph{mean-field anticipated backward stochastic differential equations} (MFABSDEs), which all the derivatives of the coefficients were in Lions’ sense. Guatteri et al. \cite{GMW25} studied the global maximum principle to the case of stochastic delay differential equations of mean-field type. More precisely, the coefficients of stochastic control problem depended on the state, on the past trajectory and on its expected value. Moreover, the control entered the noise coefficient and the control domain could be non-convex. The approach was based on a lifting of the state equation to an infinite dimensional Hilbert space that removed the explicit delay in the state equation. The main ingredient in the proof of the maximum principle was a precise asymptotic for the expectation of the first order variational process, which allowed them to neglect the corresponding
	second order terms in the expansion of the cost functional. Zhang \cite{ZQX25} regarded the delay term of the state as a control process and then the Ekeland’s variational principle was applied to obtain a general maximum principle for the optimal control problems. Meng et al. \cite{MSWZ25} obtained a general maximum principle for a stochastic optimal control problem where the control domain was an arbitrary nonempty set and all the coefficients (especially the diffusion term and the terminal cost) contained the control and state delay. In order to overcome the difficulty of dealing with the cross term of state and its delay in the variational inequality, they proposed a new method: transform a delayed variational equation into a Volterra integral equation without delay, and introduced novel first-order, second-order adjoint equations via the \emph{backward stochastic Volterra integral equations} (BSVIEs, in short) theory. Finally, they expressed these two kinds of adjoint equations in more compact anticipated backward stochastic differential equation types for several special yet typical control systems.
	
	Forward Volterra integral equations were first introduced by Volterra \cite{VV59}. Since then, extensive studies have been devoted to optimal control problems for forward Volterra integral systems. In contrast, relatively little attention has been paid to the optimal control of \emph{forward stochastic Volterra integral euqations} (FSVIEs, in short). One possible reason is that the theory of BSVIEs was not established until 2002, when Lin \cite{LJZ02} developed the framework of Type-I BSVIEs. Subsequently, Yong \cite{YJM06} introduced Type-II BSVIEs in 2006 and derived the first stochastic maximum principle for optimal control problems of forward stochastic Volterra integral systems with convex control domains. More recently, Wang and Yong \cite{WY23} proposed an auxiliary process approach and obtained a general maximum principle allowing for nonconvex control domains. Additional related works can be found in~\cite{HY23,WTX20, WZ17}.
	
	In the aforementioned studies, point delay and distributed delay are the two primary delay types taken into account. While these two types of delays possess some theoretical relevance and economic interpretability in describing the system's historical reliance, they are generally insufficient to capture the influence of random uncertainty elements present in the real dynamical system. For instance, in the financial market environment, the dynamic process exhibits notable non-stationarity and randomness due to monetary policy adjustments, business and investment decisions made by enterprises, investor behavioral deviations, information asymmetry, and the occurrence of abrupt \emph{black swan events}, which are not only affect investors’ asset positions indirectly through indicators like volatility and rate of return, but also they directly influence how stock values fluctuate and change over time. Moreover, stock price fluctuations are influenced by the strategic behaviors adopted by multiple market participants. Investors, trading institutions, and other agents dynamically adjust their decisions not only based on current market information but also considering the decisions of other participants. Consequently, the formation and evolution of stock prices are not governed by a single decision-maker; rather, they emerge from the joint effects of stochastic factors and strategic interactions among multiple agents. In addition, the goals of different participants in the stock market are different, such as profit maximization, risk management, and the maintenance of market stability. This feature of interdependence and inconsistent goals can naturally transform the investment decision-making process in the stock market into a nonzero-sum game model. Within this framework, participants select strategies to maximize or minimize their respective objective functions, thereby capturing the essential features of competitive and cooperative behaviors observed in real financial markets.
	
	From the perspective of nonzero-sum stochastic differential games, the tendency of the stock price is now a dynamic process of interaction between numerous random factors and strategies rather than the product of a single subject's control. With considerable randomness, dependence, and structural complexity, its state variables typically display the form of time-evolving stochastic processes. Thus, the systematic characterization of the statistical properties, temporal dependence structure, and dynamic evolution of stock price processes, as well as the development of effective prediction methodologies based thereon, have become a central issue in both theoretical investigations and practical applications.
	
	Against this background, time series analysis, as a fundamental mathematical tool for studying stochastic dynamical systems, has been widely employed to describe the mechanism of action of uncertain factors in the financial market at different time scales, to reveal the intrinsic structure of stock price games, and to provide a solid theoretical foundation for the design of decision-making strategies. Among various time series models, the \emph{autoregressive moving average} (ARMA, in short) model occupies a prominent position in financial modeling, owing to its ability to simultaneously capture the memory effects of asset dynamics and the influence of random disturbances (for example, Peter and Richard \cite{PR02}). More specifically, the ARMA model characterizes the dependence of the current state on past information, which is referred to as memory effects, by representing the present value as a finite linear combination of past states and a finite linear combination of past random disturbances. Its general form is given by 
	\begin{equation}\label{ARMA}
		X_t=\sum_{j=1}^{p}a_jX_{t-j}+\sum_{j=0}^{q}b_j\epsilon_{t-j},\ t\in\mathbb{Z},
	\end{equation}
	where $\{\epsilon_t\}$ is WN$(0,\sigma^2)$, $a_j, b_j\in\mathbb{R}$ coming from the polynomials $A(z)=1-\sum_{j=1}^pa_jz^j\ne 0,\ |z|\le1 $ with $B(z)=1-\sum_{j=0}^qb_jz^j\ne 0,\ |z|\le1$, respectively. In addition, polynomials $A(z)=0$ and $B(z)=0$ 
	with real coefficients have no common roots with $b_0=1,\ a_pb_q\ne 0$.
	The autoregressive coefficient $a_j$ describes the persistent memory of the sequence to the historical state, while the moving average coefficient $b_j$ represents the lagged memory of the random disturbance to the current state. In time series analysis, these two kinds of coefficients are crucial. The former reflects the intrinsic memory structure of the state evolution and is crucial for examining the model's fitting capability, reversibility, and stability of judgment estimation. The latter, on the other hand, captures the temporal propagation mechanism of stochastic shocks and is primarily used to evaluate the effectiveness of estimation procedures, to guide model order selection, and to conduct statistical hypothesis testing on model parameters.
	
	The summation terms in the ARMA model is essentially a description of the weighted accumulation of historical information within a discrete time framework, which is effective in capturing and analyzing financial time series driven by long-term, low-frequency trading behaviors. However, there are a large number of short-term high-frequency trading behaviours in the actual financial market, which show more complex and rapid characteristics in terms of time scale and information response speed. It may be difficult to fully characterise its internal mechanism by relying only on the above discrete time weighted accumulation structure. Nevertheless, the structural form of the ARMA model and its inherent memory property naturally inspire us to introduce the following integral form of the memory term. Based on the cumulative summation structure characteristics of the autoregressive terms, we naturally extend this formulation to integral type delays as the following
	\begin{equation}\label{extended distributed delay}
		\int_0^t \phi(t,s)x(s)ds,
	\end{equation}
	which is named \emph{extended distributed delays} in \cite{MWZ25}, where kernel  $\phi(t,s),\{(t,s)\in\mathbb{R}^2,0\le s\le t\}$ represents the weight of the influence exerted by the historical state $x(s)$ prior to time $t$ on the system’s evolution at the current time. We refer to $\phi(t,s)$ as an \emph{autoregressive kernel} in view of its close structural analogy to the autoregressive coefficients in ARMA models, which serves to characterize the dependence of the system state on its entire past trajectory in continuous time.
	
	On the other hand, the moving average term in the ARMA model captures the persistent impact of white noise memory on the current state by means of a linear weighting of a finite number of random disturbances. In analogy with the autoregressive term, it is naturally extended to the following stochastic integral term with the kernel function in continuous time
	\begin{equation}\label{extended noisy memory}
		\int_0^t \psi(t,s)x(s)dW(s).
	\end{equation}
	Inspired by \cite{DMOR16}, we denote this term by \emph{extended noisy memory}, where $\psi(t,s),\{(t,s)\in\mathbb{R}^2,0\le s\le t\}$ represents the degree of influence of the historical trajectory $x(s),0\le s\le t$ affected by noise memory prior to time $t$ on the current state, which is the natural extension of the moving average coefficient $b_j$ of the ARMA model in continuous time, and is therefore referred to as the \emph{moving average kernel}.
	
	Furthermore, the dynamic evolution of asset prices (system state) in the investment and asset allocation problem is influenced not only by extended distributed delays and the extended noisy memory, but also the investment strategy (control) of the investors, which has a more direct and substantial impact on the asset changes process. It is challenging to accurately represent the dynamic nature of assets in the context of investment solely from the memory structure of asset prices because investment strategies in the real financial market frequently include continuity, delay effects, and cumulative effects. Consequently, there exists difficult to effectively reflect the dynamic characteristics of assets in the context of investment only from the memory structure of asset prices. It is therefore necessary to incorporate delay effects in the control variables into the modeling framework, so as to characterize the persistent influence of past investment decisions on the current asset state. By analogy with the extension process for extended distributed delays as before, we introduce
	\begin{equation}\label{extended intrgeal control}
		\int_0^t \kappa(t,s)u(s)ds,
	\end{equation}
	where $u(\cdot)$ denotes the investor’s investment strategy (control), while the kernel $\kappa(t,s),\{(t,s)\in \mathbb{R}^2 , 0\le s\le t\}$ reflects the persistent influence of past control actions on the current state of the system.
	
	However, given that random elements like execution failures, liquidity shocks, and information asymmetry frequently accompany control in the real financial market, the control decisions of investors may also exert a persistent stochastic influence on the system through noise. Inspired by the extended noisy memory, this work introduces term as following
	\begin{equation}\label{extended noisy control}
		\int_0^t \eta(t,s)u(s)dW(s).
	\end{equation}
	This model is proposed for the first time to the best of our knowledge, in which the kernel $\eta(t,s), \{(t,s)\in\mathbb{R}^2, 0\le s\le t\}$ has the same structural properties as the moving average kernel. The stochastic maximum principle for systems with noisy memory was first investigated by Malliavin calculus in \cite{DMOR16}. Subsequent related developments were carried out in \cite{CZ26, KWX25, ZF22, Dahl20, ZQX25-2, MJS23, DFT22, ML19}. However, these works did not consider systems incorporating extended distributed delays together and extended noisy memory with (\ref{extended intrgeal control}), (\ref{extended noisy control}). In light of the preceding discussion, in order to give the optimal strategy of investors in a stock framework within financial markets, which naturally leads us to investigate the optimal control of stochastic delay differential systems that incorporate the extended distributed delays and extended noisy memory of the state variable with control variable.
	
	Assume that $(\Omega,\mathcal{F},\mathbb{F},\mathbb{P})$ is a complete filtered probability space with the filtration defined as $\mathbb{F}:=\{\mathcal{F}_t\}_{t\geq 0}=\sigma\{W(s);0 \leq s\leq t\}$, where $\{W(t)\}_{t\geq 0}$ is a one-dimensional 
	standard Brownian motion. Let $\delta>0$ is a given constant time delay parameter, with $U\subset\mathbb{R}^m$ is a nonempty convex set. For $t_0\ge0$, given a time duration $[t_0,T]$, we consider the following system with extended mixed delay
	\begin{equation}\left\{\begin{aligned}\label{state equation}
			dx(t)&=b\big(t,x(t),y(t),z(t),\kappa(t),u(t),\mu(t),\nu(t), \lambda(t)\big)dt\\
			&\quad+\sigma\big(t,x(t),y(t),z(t),\kappa(t),u(t),\mu(t),\nu(t), \lambda(t)\big)dW(t),\ t\in[t_0,T],\\
			x(t)&=\xi(t),\ u(t)=\varsigma(t),\ t\in[t_0-\delta,t_0],
		\end{aligned}\right.\end{equation}
	with the cost functional
	\begin{equation}\begin{aligned}\label{cost1}
			J(u(\cdot))=\mathbb{E}\bigg\{\int_{t_0}^{T}l\big(&t,x(t),y(t),z(t),\kappa(t),u(t),\mu(t),\nu(t),\lambda(t)\big)dt+h\big(x(T),y(T),z(T),\kappa(T)\big)\bigg\},
	\end{aligned}\end{equation}
	where $x(\cdot)\in\mathbb{R}^n,~u(\cdot)\in U$ represents the state variable and the control variable, respectively, and $y(t):=x(t-\delta),\ z(t):=\int_{t_0}^{t}\phi_1(t,s)x(s)ds,\ \kappa(t):=\int_{t_0}^{t}\psi_1(t,s)x(s)dW(s),\ \mu(t):=u(t-\delta),\ \nu(t):=\int_{t_0}^{t}\phi_2(t,s)u(s)ds,\ \lambda(t):=\int_{t_0}^{t}\psi_2(t,s)u(s)dW(s)$, with $\phi_1(\cdot,\cdot), \psi_1(\cdot,\cdot)\in\mathbb{R}^{n\times n},~\phi_2(\cdot,\cdot), \psi_2(\cdot,\cdot)\in\mathbb{R}^{m\times m}$. In the above, $b,\sigma,l,h$ are given coefficients with $b,\sigma: [0,T]\times\mathbb{R}^n\times\mathbb{R}^n\times\mathbb{R}^n\times\mathbb{R}^n\times U\times U\times U\times U\to\mathbb{R}^n,~l: [0,T]\times\mathbb{R}^n\times\mathbb{R}^n\times\mathbb{R}^n\times\mathbb{R}^n\times U\times U\times U\times U\to\mathbb{R},~h: \mathbb{R}^n\times\mathbb{R}^n\times\mathbb{R}^n\times\mathbb{R}^n \to\mathbb{R}$. Continuous function $\xi(\cdot)$ and square integrable function $\varsigma(\cdot)$ are the initial trajectories of the state and the control, respectively. 
	
	Define the admissible control set as follows
	\begin{equation}\begin{aligned}\label{admissible control set}
			\mathcal{U}_{ad}:=\Big\{u(\cdot):[t_0-\delta,T]\to\mathbb{R}^m \big|& u(\cdot)~ \text{is a}~
			U\text{-valued, square-integrable},\mathbb{F}\text{-adapted}\\
			&\text{process and}~u(t)=\varsigma(t),~t\in[t_0-\delta,t_0]\Big\}.  	
	\end{aligned}\end{equation}
	
	We state the optimal control problem as follows.
	
	\textbf{Problem (P)}: Our object is to find a control $u^\ast(\cdot)$ over $\mathcal{U}_{ad}$ such that (\ref{state equation}) is satisfied and (\ref{cost1}) is maximized, i.e.,
	\begin{equation*}
		J(u^\ast(\cdot))=\underset{u(\cdot)\in\mathcal{U}_{ad}}{\sup}J(u(\cdot)).
	\end{equation*}
	Any $u^\ast(\cdot)\in\mathcal{U}_{ad}$ that achieve the above supremum is called an \emph{optimal control} and the corresponding solution $x^\ast(\cdot)$ is called the \emph{optimal trajectory}.  
	The optimal pair is denoted as $(x^\ast(\cdot),u^\ast(\cdot))$.
	
	In what follows, we investigate \textbf{Problem (P)}. The main innovations and contributions of this paper can be summarized as follows.
	
	(1) The model is general. Inspired by the ARMA($p,q$) model in time series analysis, we study the extended noise memory (\ref{extended noisy memory}) which can cover the noise memory proposed in \cite{DMOR16} and provide an interpretation of extended distribute delay (\ref{extended distributed delay}), where the kernel functions $\phi(\cdot, \cdot)$ and $\psi(\cdot, \cdot)$ in (\ref{extended distributed delay}) and (\ref{extended noisy memory}) are termed as the autoregressive kernel and the moving average kernel, respectively. Together with point delay, these two types of delays constitute the extended mixed delay considered in this paper. The extended mixed delays of both the state and control variables are component of the coefficients of the stochastic control system, in particular the diffusion term and the terminal cost. It is worth mentioning that, to the best of our knowledge, the controlled extended noisy memory (\ref{extended noisy control}) is first proposed and studied in this paper.
	
	(2) Three methods are employed in this paper to overcome the difficulties induced by the extended noisy memory. Firstly, inspired by \cite{DMOR16}, the B-D-G inequality is applied to handle the challenges arising from the extended noisy memory, which enables us to derive a priori estimates for the solution and to introduce the variational equation. Secondly, with the help of the stochastic Fubini theorem, a novel transformation method is proposed to successfully convert the variational equation into SVIE without delay. Finally, the appearance of extended noisy memory results in the coefficients of SVIE failing to satisfy the conditions of Theorem 5.1 in \cite{YJM08}. To address this issue, we develop a \emph{coefficient decomposition} method and introduce a class of BSVIEs involving Malliavin derivatives, together with a generalized duality principle, as a part of the adjoint equations. Based on this framework, a stochastic maximum principle and a corresponding verification theorem are established.
	
	(3) The adjoint equation is expressed in a more compact form. With the help of the Clark-Ocone formula, the essential difficulty brought by Malliavin derivatives in the adjoint equation is solved, which allows the adjoint process to be expressed as a class of ABSDEs with Malliavin derivatives. Motivated by the heterogeneity of investment strategies and objectives among different market participants in the stock market, we further investigate a nonzero-sum stochastic differential game with extended mixed delays and a linear-quadratic (LQ, in short) example to demonstrate the simplicity and effectiveness of the aforementioned conclusions.
	
	The remainder of this paper is organized as follows. Section 2 introduces the fundamental notation and preliminary results. In Section 3, the variational equation with extended mixed delays is transformed into a Volterra integral equation without delay. Section 4 is devoted to the derivation of the stochastic maximum principle and the verification theorem, where the adjoint equation is constructed via coefficient decomposition and the generalized duality principle. In Section 5, the adjoint equation is linked with a class of ABSDEs. Applications to nonzero-sum stochastic differential game with extended mixed delays and LQ problems are presented in Section 6. Finally, concluding remarks are given in Section 7.
	
	Given finite time duration $T>0$,  $\mathbb{R}^{n\times m}$ represents the Euclidean space of all $n\times m$ real matrices, while $\mathbb{S}^n$ denotes the space containing all $n\times n$ symmetric matrices. $\mathbb{R}^{n\times m}$ is abbreviated as $\mathbb{R}^n$ when $m=1$. The symbols $|\cdot|$ and $\langle\cdot,\cdot\rangle$ indicate the norm in $\mathbb{R}^n$ and the inner product, respectively. The superscript $^\top$ frequently appears in the rest of content, denoting the transpose of a vector or matrix. $I$ represents the identity matrix of appropriate dimensions and $\mathbb{I}$ is indicator function. 	
	
	\section{Problem statement and preliminaries}
	
	Some preliminaries used in this work are presented in this section.
	
	Combining with the standard Brownian motion $\{W(t)\}_{t\geq 0}$ introduced in Section 1, under the probability measure $\mathbb{P}$, $\mathbb{E}[\cdot]$ designates the expectation, consequently, the conditional expectation is symbolized as $\mathbb{E}^{\mathcal{F}_t}[\cdot]:=\mathbb{E}[\cdot|\mathcal{F}_t]$.
	
	Then, we present the following spaces. For an integer $p>0$, we define
	\begin{equation*}\begin{aligned}
			&L^p([0,T];\mathbb{R}^n):=\bigg\{\mathbb{R}^n\mbox{-valued function }\phi(\cdot);\ \int_0^T|\phi(t)|^pdt<\infty\bigg\},\\
			&L^{\infty}([0,T];\mathbb{R}^n):=\bigg\{\mathbb{R}^n\mbox{-valued function }\phi(\cdot);\ \sup\limits_{0\leq t\leq T}|\phi(t)|dt<\infty\bigg\},\\
			&C([0,T];\mathbb{R}^n):=\bigg\{\mathbb{R}^n\mbox{-valued continuous function }\phi(\cdot);\ \sup\limits_{0\leq t\leq T}|\phi(t)|<\infty\bigg\},\\
			&L^2_{\mathcal{F}_t}(\Omega;\mathbb{R}^n):=\bigg\{\mathbb{R}^n\mbox{-valued }\mathcal{F}_t\mbox{-measurable random variable }\xi;\ \mathbb{E}|\xi|^2<\infty\bigg\},\\
			&L^2_\mathbb{F}([0,T];\mathbb{R}^n):=\bigg\{\mathbb{R}^n\mbox{-valued }\mathbb{F}\mbox{-adapted process }\phi(\cdot);\ \mathbb{E}\int_0^T|\phi(t)|^2dt<\infty  \bigg\},\\
			&L^2_\mathbb{F}\big(\Omega;C([0,T];\mathbb{R}^n)\big):=\bigg\{\mathbb{R}^n\mbox{-valued }\mathbb{F}\mbox{-adapted continuous process }\phi(\cdot);\ \mathbb{E}\Big[\sup_{0\le t\le T}|\phi(t)|^2\Big]<\infty\bigg\},\\
			&L^2\big([0,T];L^2_\mathbb{F} (0,T;\mathbb{R}^n\big):=\bigg\{\mathbb{R}^n\mbox{-valued process }\phi(\cdot,\cdot)\mbox{ such that for almost all }t\in[0,T],\\ 
			&\qquad\qquad\qquad\qquad\qquad\qquad\phi(t,\cdot)\in L^2_\mathbb{F}([0,T];\mathbb{R}^n);\ \mathbb{E}\int_0^T\int_0^T|\phi(t,s)|^2dsdt<\infty\bigg\},\\
	\end{aligned}\end{equation*}
	\begin{equation*}\begin{aligned}	
			&L_{\mathcal{F}_t}^{\infty}\big([0,T];L_{\mathbb{F}}^{\infty}([0,T];\mathbb{R}^{n\times n})\big):=\bigg\{\mathbb{R}^{n\times n}\mbox{-valued and measurable process }\phi(\cdot,\cdot)\mbox{ such that for}\\ 
			&\quad \mbox{almost all }t\in[0,T],\ \phi(t,\cdot)\mbox{ is }\mathbb{F}\mbox{-progressively}\mbox{ measurable};\ \mathbb{E}\Big[\sup_{0\le t\le T}\sup_{0\le s\le t}|\phi(t,s)|\Big]<\infty\bigg\},\\
			&L^{\infty}\big([0,T];L_{\mathbb{F}}^{\infty}([0,T];\mathbb{R}^{n\times n})\big):=\bigg\{\mathbb{R}^{n\times n}\mbox{-valued process }\phi(\cdot,\cdot)\mbox{ such that for each }t\in[0,T],\\ 
			&\quad\phi(t,\cdot)\mbox{ is }\mathbb{F}\mbox{-progressively measurable }[0,T];\sup_{0\le t\le T}\mathbb{E}\Big[\sup_{0\le s\le t}|\phi(t,s)|\Big]<\infty\bigg\},\\
			&L^{\infty}\big([0,T];L_{\mathbb{F}}^{\infty}\big(\Omega,C( [0,T];\mathbb{R}^{n\times n})\big)\big):=\bigg\{\mathbb{R}^{n\times n}\mbox{-valued process }\phi(\cdot,\cdot)\mbox{ such that for each }\\
			&\quad t\in[0,T],\phi(t,\cdot)\mbox{ is }\mathbb{F}\mbox{-progressively measurable and continuous};\ \sup_{0\le t\le T}\sup_{0\le s\le t}\mathbb{E}[|\phi(t,s)|]<\infty\bigg\}.
	\end{aligned}\end{equation*}
	
	Consider the following SDDE
	\begin{equation}\left\{\begin{aligned}\label{SDDE2}
			d\tilde{x}(t)&=\tilde{b}\Big(t,\tilde{x}(t),\tilde{x}(t-\delta),\int_{t_0}^t\tilde{\phi}(t,s)\tilde{x}(s)ds,\int_{t_0}^t\tilde{\psi}(t,s)\tilde{x}(s)dW(s)\Big)dt\\
			&\quad+\tilde{\sigma}\Big(t,\tilde{x}(t),\tilde{x}(t-\delta),\int_{t_0}^t\tilde{\phi}(t,s)\tilde{x}(s)ds,\int_{t_0}^t\tilde{\psi}(t,s)\tilde{x}(s)dW(s)\Big)dW(t),\ t\in[t_0,T],\\
			\tilde{x}(t)&=\tilde{\xi}(t),\ t\in[t_0-\delta,t_0],
		\end{aligned}\right.\end{equation}
	where $0\le t_0<T<\infty$, $\delta>0$ is given finite time delay, $\tilde{\xi}(\cdot)\in C\big([t_0-\delta,t_0];\mathbb{R}^n\big)$ is the given initial path of the state $\tilde{x}(\cdot)$, and $\tilde{\phi}(\cdot,\cdot),\tilde{\psi}(\cdot,\cdot)\in L_{\mathcal{F}_t}^{\infty}\big([t_0,T];L_{\mathbb{F}}^{\infty}(t_0,T;\mathbb{R}^{n\times n})\big)$. In the above, $\tilde{b},\tilde{\sigma}:[t_0,T]\times\mathbb{R}^n\times\mathbb{R}^n\times\mathbb{R}^n\times\mathbb{R}^n\to\mathbb{R}^n$ are given functions satisfying:
	
	(\textbf{H2.1}) there exists some $L>0$, such that for all $t\in[t_0,T]$,
	\begin{equation*}
		\begin{aligned}
			&|\tilde{b}(t,x_1,y_1,z_1,\kappa_1)-\tilde{b}(t,x_2,y_2,z_2,\kappa_2)|+|\tilde{\sigma}(t,x_1,y_1,z_1,\kappa_1)-\tilde{\sigma}(t,x_2,y_2,z_2,\kappa_2)|\\
			&\le L\big(|x_1-x_2|+|y_1-y_2|+|z_1-z_2|+|\kappa_1-\kappa_2|\big),\quad \forall x_1,x_2,y_1,y_2,z_1,z_2,\kappa_1,\kappa_2\in\mathbb{R}^n.
		\end{aligned}
	\end{equation*}
	
	(\textbf{H2.2}) $\text{sup}_{0\le t\le T}\big(|\tilde{b}(t,0,0,0,0)|+|\tilde{\sigma}(t,0,0,0,0)|\big)<+\infty$.
	
	It is established as follows that the existence and uniqueness result for SDDE (\ref{SDDE2}).
	
	\begin{mypro}\label{themSSDE}
		Assuming (\textbf{H2.1}) and (\textbf{H2.2}) are satisfied, let $\tilde{\xi}(\cdot):\Omega\to C\big([t_0-\delta,t_0]; \mathbb{R}^n\big)$ be $\mathcal{F}_{t_0}$-measurable and $\mathbb{E}\big[\sup_{t\in [t_0-\delta,t_0]}|\tilde{\xi}(t)|^2\big]<\infty$, with $\tilde{\phi}(\cdot,\cdot),\tilde{\psi}(\cdot,\cdot)\in L_{\mathcal{F}_t}^{\infty}\big(t_0,T;L_{\mathbb{F}}^{\infty}(t_0,T;\mathbb{R}^{n\times n})\big)$. Then the SDDE (\ref{SDDE2}) admits a unique solution $\tilde{x}(\cdot)\in L^2_\mathbb{F}\big(\Omega;C([t_0,T];\mathbb{R}^n)\big)$.	
	\end{mypro}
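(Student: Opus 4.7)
The plan is to establish existence and uniqueness via a Picard iteration in the Banach space $\mathcal{X} := L^2_\mathbb{F}\bigl(\Omega; C([t_0,T];\mathbb{R}^n)\bigr)$ equipped with the norm $\|x\|_\mathcal{X}^2 := \mathbb{E}\bigl[\sup_{t_0 \le t \le T}|x(t)|^2\bigr]$. I would define the iterates $\{\tilde{x}^k(\cdot)\}_{k \ge 0}$ by setting $\tilde{x}^0(t) := \tilde{\xi}(t_0)$ on $[t_0,T]$ and $\tilde{x}^0(t) := \tilde{\xi}(t)$ on $[t_0-\delta,t_0]$, and then inductively
\begin{equation*}
\begin{aligned}
\tilde{x}^{k+1}(t) &:= \tilde{\xi}(t_0) + \int_{t_0}^t \tilde{b}\Bigl(s,\tilde{x}^k(s),\tilde{x}^k(s-\delta),\textstyle\int_{t_0}^s \tilde{\phi}(s,r)\tilde{x}^k(r)dr,\int_{t_0}^s \tilde{\psi}(s,r)\tilde{x}^k(r)dW(r)\Bigr)ds \\
&\quad + \int_{t_0}^t \tilde{\sigma}\Bigl(s,\tilde{x}^k(s),\tilde{x}^k(s-\delta),\textstyle\int_{t_0}^s \tilde{\phi}(s,r)\tilde{x}^k(r)dr,\int_{t_0}^s \tilde{\psi}(s,r)\tilde{x}^k(r)dW(r)\Bigr)dW(s),
\end{aligned}
\end{equation*}
for $t \in [t_0,T]$, extended by $\tilde{\xi}$ on $[t_0-\delta,t_0]$.

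The first step is to verify $\tilde{x}^k \in \mathcal{X}$ for each $k$ by induction, using (\textbf{H2.1})-(\textbf{H2.2}), the essential boundedness of $\tilde{\phi},\tilde{\psi}$ built into the function spaces, the hypothesis $\mathbb{E}[\sup_{[t_0-\delta,t_0]}|\tilde{\xi}|^2] < \infty$, the Burkholder-Davis-Gundy (B-D-G) inequality for the It\^o integral, and Cauchy-Schwarz for the drift. The crux is then a contraction-type estimate on the differences $\Delta^k(t) := \tilde{x}^{k+1}(t) - \tilde{x}^k(t)$. Applying B-D-G, Cauchy-Schwarz, and the Lipschitz condition, one obtains a bound of the form
\begin{equation*}
\mathbb{E}\sup_{t_0 \le r \le t}|\Delta^k(r)|^2 \le C \int_{t_0}^t \Bigl(\mathbb{E}|\Delta^{k-1}(s)|^2 + \mathbb{E}|\Delta^{k-1}(s-\delta)|^2 + \mathbb{E}|I_\phi^{k-1}(s)|^2 + \mathbb{E}|I_\psi^{k-1}(s)|^2\Bigr)ds,
\end{equation*}
where $I_\phi^{k-1}(s)$ and $I_\psi^{k-1}(s)$ denote the distributed delay and noisy memory integrals of $\Delta^{k-1}$ with kernels $\tilde{\phi},\tilde{\psi}$ respectively. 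Iterating, $\|\Delta^k\|_\mathcal{X}^2 \le \frac{(CT)^k}{k!}\|\Delta^0\|_\mathcal{X}^2$, so $\{\tilde{x}^k\}$ is Cauchy in $\mathcal{X}$; passing to the limit yields a solution, and applying the same estimate to the difference of any two solutions gives uniqueness via Gronwall's inequality.

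The step I expect to be the main obstacle is controlling the noisy memory term $I_\psi^{k-1}(s) = \int_{t_0}^s \tilde{\psi}(s,r)\Delta^{k-1}(r)dW(r)$. Because the integrand $\tilde{\psi}(s,\cdot)$ depends explicitly on the outer variable $s$, the map $s \mapsto I_\psi^{k-1}(s)$ is \emph{not} a martingale, so B-D-G cannot be used directly to obtain an $\mathbb{E}\sup_s$-bound. My plan is to estimate it pointwise in $s$ via It\^o's isometry and the essential boundedness of $\tilde{\psi}$, obtaining $\mathbb{E}|I_\psi^{k-1}(s)|^2 \le L^2\int_{t_0}^s \mathbb{E}|\Delta^{k-1}(r)|^2 dr$, and then couple this $\sup_s\mathbb{E}$-type bound to the outer $\mathbb{E}\sup_s$-bound through the integration in $s$ on the right-hand side of the preceding display. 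Combining this with the elementary observation $\mathbb{E}|\Delta^{k-1}(s-\delta)|^2 \le \|\Delta^{k-1}\|_\mathcal{X}^2$ for the point-delay shift, the iteration closes and the standard series argument concludes.
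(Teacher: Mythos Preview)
Your proposal is correct and follows a genuinely different route from the paper. The paper does \emph{not} use Picard iteration in the sup-norm; instead it equips the same set $L^2_\mathbb{F}\bigl(\Omega;C([t_0,T];\mathbb{R}^n)\bigr)$ with a weighted $L^2$-in-time norm $\|x\|_{\mathcal{M}_\beta}^2 = \mathbb{E}\int_{t_0-\delta}^T e^{-\beta s}|x(s)|^2\,ds$, applies It\^o's formula to $e^{-\beta t}|\breve{X}(t)|^2$ for the difference of two iterates, and shows that for $\beta$ chosen large enough the solution map is a strict $\tfrac12$-contraction in one step. Both approaches handle the noisy-memory term the same way---pointwise in the outer variable via It\^o isometry and Fubini, exploiting the boundedness of $\tilde\psi$---so your identification of the main obstacle and its resolution matches the paper's. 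What the paper's weighted-norm trick buys is a single contraction step rather than the $(CT)^k/k!$ series; what your approach buys is that you work directly in the sup-norm space asserted in the statement, so path-continuity of the limit is immediate, whereas the paper's fixed point is a priori obtained only in a weighted $L^2$-in-time topology and one must separately observe (as is routine) that the integral representation forces continuous paths.
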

	\begin{proof}
		For any $\beta>0$, denote
		\begin{equation*}
			\mathcal{M}_\beta\big[t_0,T\big]\equiv L_\mathbb{F}^2\big(\Omega;C([t_0,T];\mathbb{R}^n)\big),
		\end{equation*}
		whose norm is defined by
		\begin{equation*}
			||x(\cdot)||_{\mathcal{M}_\beta[t_0,T]}\equiv\Big(\mathbb{E}\Big[\int_{t_0-\delta}^Te^{-\beta s}|x(s)|^2ds\Big]\Big)^{\frac{1}{2}}.
		\end{equation*}
		For any $\tilde{x}(\cdot)\in\mathcal{M}_\beta[t_0,T]$ with $\tilde{x}(t)=\tilde{\xi}(t-t_0), t\in[t_0-\delta,t_0]$, constructing the mapping 
		\begin{equation*}\begin{aligned}
				\mathcal{T}:\mathcal{M}_\beta[t_0,T]&\to\mathcal{M}_\beta[t_0,T],\\
				\tilde{x}(\cdot)&\mapsto\tilde{X}(\cdot),	
		\end{aligned}\end{equation*}
		where $\tilde{X}(\cdot)$ corresponds to the solution of the following SDDE
		\begin{equation*}\left\{\begin{aligned}
				d\tilde{X}(t)&=\tilde{b}\Big(t,\tilde{x}(t),\tilde{x}(t-\delta),\int_{t_0}^t\tilde{\phi}(t,s)\tilde{x}(s)ds,\int_{t_0}^t\tilde{\psi}(t,s)\tilde{x}(s)dW(s)\Big)dt\\
				&\quad +\tilde{\sigma}\Big(t,\tilde{x}(t),\tilde{x}(t-\delta),\int_{t_0}^t\tilde{\phi}(t,s)\tilde{x}(s)ds,\int_{t_0}^t\tilde{\psi}(t,s)\tilde{x}(s)dW(s)\Big)dW(t),\ t\in[t_0,T],\\
				\tilde{X}(t)&=\tilde{\xi}(t-t_0),\ t\in[t_0-\delta, t_0].
			\end{aligned}\right.\end{equation*}
		For any $\tilde{x}_1(\cdot),\tilde{x}_2(\cdot)\in\mathcal{M}_\beta[t_0,T]$, and $\tilde{x}_1(t),\tilde{x}_2(t)=\tilde{\xi}(t-t_0),t\in[t_0-\delta, t_0]$, let
		\begin{equation*}
			\tilde{X}_1(\cdot)=\mathcal{T}\big(\tilde{x}_1(\cdot)\big),\ \tilde{X}_2(\cdot)=\mathcal{T}\big(\tilde{x}_2(\cdot)\big),\ \breve{X}(\cdot)=\tilde{X}_1(\cdot)-\tilde{X}_2(\cdot),\ \breve{x}(\cdot)=\tilde{x}_1(\cdot)-\tilde{x}_2(\cdot).
		\end{equation*}
		Then, applying It\^o formula to $t\mapsto e^{-\beta t}|\breve{X}(t)|^2$ on $[t_0,T]$ results in
		\begin{equation*}\begin{aligned}
				&\beta\mathbb{E}\Big[\int_{t_0}^T\big<e^{-\beta t},|\breve{X}(t)|^2\big>dt\Big]\\ 
				\le&\ 2\mathbb{E}\int_{t_0}^T\Big<e^{-\beta t},\breve{X}(t)\Big|\tilde{b}\Big(t,\tilde{x}_1(t),\tilde{x}_1(t-\delta),\int_{t_0}^t\tilde{\phi}(t,s)\tilde{x}_1(s)ds,\int_{t_0}^t\tilde{\psi}(t,s)\tilde{x}_1(s)dW(s)\Big)\\
				&\qquad-\tilde{b}\Big(t,\tilde{x}_2(t),\tilde{x}_2(t-\delta),\int_{t_0}^t\tilde{\phi}(t,s)\tilde{x}_2(s)ds,\int_{t_0}^t\tilde{\psi}(t,s)\tilde{x}_2(s)dW(s)\Big)   \Big|\Big>dt\\
				&+\mathbb{E}\int_{t_0}^Te^{-\beta t}\Big|\tilde{\sigma}\Big(t,\tilde{x}_1(t),\tilde{x}_1(t-\delta),\int_{t_0}^t\tilde{\phi}(t,s)\tilde{x}_1(s)ds,\int_{t_0}^t\tilde{\psi}(t,s)\tilde{x}_1(s)dW(s)\Big)\\
				&\qquad-\tilde{\sigma}\Big(t,\tilde{x}_2(t),\tilde{x}_2(t-\delta),\int_{t_0}^t\tilde{\phi}(t,s)\tilde{x}_2(s)ds,\int_{t_0}^t\tilde{\psi}(t,s)\tilde{x}_2(s)dW(s)\Big)\Big|^2dt.
		\end{aligned}\end{equation*}
		Notice that
		\begin{equation*}\begin{aligned}
				&\int_{t_0}^Te^{-\beta t}|\breve{x}(t-\delta)|^2dt\le \hat{L}\int_{t_0-\delta}^Te^{-\beta t}|\breve{x}(t)|^2dt,\\
				&\mathbb{E}\int_{t_0}^Te^{-\beta t}\Big|\int_{t_0}^t\tilde{\phi}(t,s)\breve{x}(s)ds\Big|^2dt\le\mathbb{E}\int_{t_0}^Te^{-\beta t}\int_{t_0}^t\big|\tilde{\phi}(t,s)\breve{x}(s)\big|^2dsdt\\
				\le&\ \mathbb{E}\int_{t_0}^T\big|\breve{x}(t)\big|^2\int_t^Te^{-\beta s}\big|\tilde{\phi}(s,t)\big|^2dsdt\le \bar{L}^2T\mathbb{E}\int_{t_0}^Te^{-\beta t}\big|\breve{x}(t)\big|^2dt,\\
				&\ \mathbb{E}\int_{t_0}^Te^{-\beta t}\Big|\int_{t_0}^t\tilde{\psi}(t,s)\breve{x}(s)dW(s)\Big|^2ds= \mathbb{E}\int_{t_0}^Te^{-\beta t}\int_{t_0}^t\big|\tilde{\psi}(t,s)\breve{x}(s)\big|^2dsdt\\
				\le&\ \mathbb{E}\int_{t_0}^T\big|\breve{x}(t)\big|^2\int_t^Te^{-\beta s}\big|\tilde{\psi}(s,t)\big|^2dsdt\le\bar{L}^2T\mathbb{E}\int_{t_0}^Te^{-\beta t}\big|\breve{x}(t)\big|^2dt,
		\end{aligned}\end{equation*}
		where $\hat{L}, \bar{L}$ are finite constants. Then, by (\textbf{H2.1}) and Cauchy-Schwartz inequality, one has
		\begin{equation*}\begin{aligned}
				&(\beta-1)\mathbb{E}\int_{t_0}^T\big<e^{-\beta t},|\breve{X}(t)|^2\big>dt\\
				\le &\ 8L^2\mathbb{E}\int_{t_0}^T\Big<e^{-\beta t},|\breve{x}(t)|^2+|\breve{x}(t-\delta)|^2+\Big|\int_{t_0}^t\tilde{\phi}(t,s)\breve{x}(s)ds\Big|^2+\Big|\int_{t_0}^t\tilde{\psi}(t,s)\breve{x}(s)dW(s)\Big|^2\Big>dt\\
				\le &\ 8L^2(1+\hat{L}+2\bar{L}^2T)\mathbb{E}\int_{t_0-\delta}^T\big<e^{-\beta t},|\breve{x}(t)|^2\big>dt.
		\end{aligned}\end{equation*}
		Choose $\beta=16L^2(1+\hat{L}+2\bar{L}^2T)+1$, then
		\begin{equation*}
			\mathbb{E}\int_{t_0-\delta}^T\big<e^{-\beta t},|\breve{X}(t)|^2\big>dt\le \frac{1}{2}\mathbb{E}\int_{t_0-\delta}^T\big<e^{-\beta t},|\breve{x}(t)|^2\big>dt,
		\end{equation*}
		which implies that $\mathcal{T}$ is a strict contraction mapping. Then the SDDE (\ref{SDDE2}) admits a unique solution in
		$L^2_\mathbb{F}\big(\Omega;C([t_0,T];\mathbb{R}^n)\big)$ by the fixed point theorem.
	\end{proof}	
	
	Let $\mathbb{R}^+$ denote the set of nonnegative real numbers. We consider the following ABSDEs
	\begin{equation}\left\{\begin{aligned}\label{ABSDE1}
			-dY(t)&=\tilde{g}\big(t,Y(t),Z(t),Y(t+\zeta_1(t)),Z(t+\zeta_2(t))\big)dt-Z(t)dW(t),\ t\in[0,T],\\
			Y(t)&=\alpha(t),\ Z(t)=\beta(t),\ t\in[T,T+K],
		\end{aligned}\right.\end{equation}
	where the terminal conditions $\alpha(\cdot)\in L^2_\mathbb{F}\big(\Omega;C([T,T+K];\mathbb{R}^n)\big)$ and $\beta(\cdot)\in L^2_\mathbb{F}(\Omega;T,T+K;\mathbb{R}^{n})$ are given. Let $\zeta_1(\cdot)$ and $\zeta_2(\cdot)$ defined on $[0,T]$ be given $\mathbb{R}^+$-valued functions such that
	
	(\textbf{H2.3}) (i) there exists a constant $K\ge0$ such that for all $s\in [0,T]$, $s+\zeta_1(s)\le T+K$, $s+\zeta_2(s)\le T+K$.
	
	\hspace{1.2cm}(ii) there exists a constant $M\ge0$ such that for all $t\in [0,T]$ and all nonnegative with integrable function $\tilde{l}(\cdot)$, 
	\begin{equation*}
		\int_t^T\tilde{l}\big(s+\zeta_1(s)\big)ds\le M\int_t^{T+K}\tilde{l}(s)ds,\ \int_t^T\tilde{l}\big(s+\zeta_2(s)\big)ds\le M\int_t^{T+K}\tilde{l}(s)ds.
	\end{equation*}
	
	The conditions imposed on the generator $\tilde{g}$ of ABSDEs (\ref{ABSDE1}) are as follows:
	
	(\textbf{H2.4}) $\tilde{g}(s,\omega,y,z,\alpha,\beta):\Omega\times\mathbb{R}^n\times\mathbb{R}^{n}\times L^2_{\mathcal{F}_{r_1}}(\Omega;\mathbb{R}^n)\times L^2_{\mathcal{F}_{r_2}}(\Omega;\mathbb{R}^{n})\to L^2_{\mathcal{F}_s}(\Omega;\mathbb{R}^n)$ for all $s\in[0,T]$, where $r_1,r_2\in[s,T+K]$, and $\mathbb{E}\big[\int_0^T|\tilde{g}(s,0,0,0,0)|^2ds\big]<+\infty$.
	
	(\textbf{H2.5}) There exists a constant $C>0$ such that for all $s\in[0,T]$, $y_1,y_2,z_1, z_2\in\mathbb{R}^{n}$, $\alpha_1(\cdot), \alpha_2(\cdot)\in L^2_\mathbb{F}\big(\Omega;C\big([s,T+K];\mathbb{R}^n\big)\big)$, $\beta_1(\cdot), \beta_2(\cdot)\in L^2_\mathbb {F}(\Omega;s,T+K;\mathbb{R}^{n})$, $r_1,r_2\in[s,T+K]$, the following holds
	\begin{equation*}\begin{aligned}
			&\big|g\big(s,y_1,z_1,\alpha_1(r_1),\beta_1(r_2)\big)-g\big(s,y_2,z_2,\alpha_2(r_1),\beta_2(r_2)\big)\big|\\
			\le~& C\big(|y_1-y_2|+|z_1-z_2|+\mathbb{E}^{\mathcal{F}_s}\big[|\alpha_1(r_1)-\alpha_2(r_1)|+|\beta_1(r_2)-\beta_2(r_2)|\big]\big).
	\end{aligned}\end{equation*}
	
	\begin{mypro}\label{ABSDE exist}
		(see \cite{PY09})
		For any given $\big(\alpha(\cdot),~\beta(\cdot)\big)\in L^2_\mathbb{F}\big(\Omega;C\big([T,T+K];\mathbb{R}^n\big)\big)\times  L^2_\mathbb{F}(\Omega;T,T+K;\mathbb{R}^{n})$, suppose that (\textbf{H2.3})-(\textbf{H2.5}) are satisfied. Then,  (\ref{ABSDE1}) admits a unique $\mathcal{F}_t$-adapted solution pair $\big(Y(\cdot), Z(\cdot)\big)\in L^2_\mathbb{F}\big(\Omega;C\big([0,T+K];\mathbb{R}^n\big)\big)\times L^2_\mathbb{F}(\Omega;0,T+K;\mathbb{R}^{n})$.	
	\end{mypro}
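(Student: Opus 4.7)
The plan is to prove existence and uniqueness by a Picard iteration on the Banach space
$$\mathcal{H}_\beta:=L^2_\mathbb{F}\bigl(\Omega;C([0,T+K];\mathbb{R}^n)\bigr)\times L^2_\mathbb{F}(\Omega;0,T+K;\mathbb{R}^{n}),$$
equipped with the weighted norm $\|(Y,Z)\|_\beta^2=\mathbb{E}\int_0^{T+K}e^{\beta s}\bigl(|Y(s)|^2+|Z(s)|^2\bigr)ds$ for a large parameter $\beta>0$ to be chosen at the end. The space is restricted to pairs that coincide with the prescribed terminal data $(\alpha,\beta)$ on $[T,T+K]$, so that the only freedom lies on $[0,T]$. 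This mirrors the exponential-weight contraction scheme just used in the proof of Proposition \ref{themSSDE}.

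For a fixed input $(\bar Y(\cdot),\bar Z(\cdot))\in\mathcal{H}_\beta$, I would substitute its values for the anticipated arguments and consider the classical BSDE
$$-dY(t)=\tilde g\bigl(t,Y(t),Z(t),\bar Y(t+\zeta_1(t)),\bar Z(t+\zeta_2(t))\bigr)dt-Z(t)dW(t),\ \ t\in[0,T],$$
with terminal condition $Y(T)=\alpha(T)$. By (\textbf{H2.5}) this generator is Lipschitz in $(Y,Z)$ and the remaining part $\tilde g(t,0,0,\bar Y(t+\zeta_1(t)),\bar Z(t+\zeta_2(t)))$ is $\mathbb{F}$-adapted; its square-integrability follows from (\textbf{H2.4}) combined with (\textbf{H2.3})(ii), which gives
$$\mathbb{E}\int_0^T|\bar Y(s+\zeta_1(s))|^2ds\le M\,\mathbb{E}\int_0^{T+K}|\bar Y(s)|^2ds<\infty,$$
and similarly for $\bar Z$. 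The Pardoux--Peng theorem then produces a unique pair $(Y,Z)$ on $[0,T]$, which we extend by $(\alpha,\beta)$ onto $[T,T+K]$. This defines a map $\mathcal{T}:(\bar Y,\bar Z)\mapsto(Y,Z)$ from $\mathcal{H}_\beta$ into itself.

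To establish contraction, set $(Y^i,Z^i)=\mathcal{T}(\bar Y^i,\bar Z^i)$ for $i=1,2$ and write $(\hat Y,\hat Z):=(Y^1-Y^2,Z^1-Z^2)$, $(\tilde Y,\tilde Z):=(\bar Y^1-\bar Y^2,\bar Z^1-\bar Z^2)$. Applying It\^o's formula to $t\mapsto e^{\beta t}|\hat Y(t)|^2$ on $[0,T]$, using (\textbf{H2.5}) to bound the cross term, and absorbing the quadratic $|\hat Z|^2$ into the martingale part via Young's inequality, yields an estimate of the form
$$\mathbb{E}\!\int_0^T\!e^{\beta s}\bigl(|\hat Y(s)|^2+|\hat Z(s)|^2\bigr)ds\le\frac{C_1}{\beta}\,\mathbb{E}\!\int_0^{T+K}\!e^{\beta s}\bigl(|\tilde Y(s)|^2+|\tilde Z(s)|^2\bigr)ds,$$
where the change-of-variable inequalities in (\textbf{H2.3})(ii) are precisely what permit the pull-back of the post-composed integrands $|\tilde Y(s+\zeta_1(s))|^2,|\tilde Z(s+\zeta_2(s))|^2$ to integrals on $[0,T+K]$. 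Because $\hat Y,\hat Z$ vanish on $[T,T+K]$, the left side upgrades to the full $\mathcal{H}_\beta$-norm, and choosing $\beta$ large enough that $C_1/\beta<1$ makes $\mathcal{T}$ a strict contraction. The Banach fixed-point theorem yields the unique solution of (\ref{ABSDE1}) in $\mathcal{H}_\beta$.

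The main obstacle lies in controlling the anticipated arguments $\bar Y(\cdot+\zeta_1(\cdot)),\bar Z(\cdot+\zeta_2(\cdot))$ inside the contraction estimate: without (\textbf{H2.3})(ii) the time-shift maps $s\mapsto s+\zeta_i(s)$ could be highly non-injective, concentrating mass on small subsets of $[0,T+K]$ and destroying any hope of absorbing these post-composed terms into the ambient $\mathcal{H}_\beta$-norm. The quantitative bound $\int_t^T\tilde l(s+\zeta_i(s))ds\le M\int_t^{T+K}\tilde l(s)ds$ is the exact ingredient that closes the estimate, and verifying it is the essential step beyond the classical BSDE argument.
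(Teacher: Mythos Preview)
The paper does not supply its own proof of this proposition; it is stated with the parenthetical ``(see \cite{PY09})'' and the result is simply quoted from Peng and Yang. Your sketch reconstructs, in outline, exactly the contraction-mapping argument that \cite{PY09} uses: freeze the anticipated arguments, solve a classical Pardoux--Peng BSDE to define the map $\mathcal{T}$, and then show $\mathcal{T}$ is a strict contraction in an exponentially weighted norm by exploiting (\textbf{H2.3})(ii) to pull back the time-shifted integrands. So in spirit your approach coincides with the source the paper cites.

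One small inaccuracy worth noting: the contraction constant does not come out as a clean $C_1/\beta$. In the standard estimate, after applying It\^o's formula and (\textbf{H2.5}), the cross term $2\langle\hat Y(s),\Delta\tilde g(s)\rangle$ is bounded via Young's inequality with a free parameter $\epsilon$; the anticipated contributions then pick up a factor $\epsilon$ (times the constant $M$ from (\textbf{H2.3})(ii)), while the compensating $\epsilon^{-1}$ lands on $|\hat Y|^2$ and is absorbed by choosing $\beta$ large. Thus the contraction factor is made small by first fixing $\epsilon$ small and only afterwards taking $\beta$ large, rather than by a direct $1/\beta$ decay. Also, when you invoke (\textbf{H2.3})(ii) with the weighted integrand you implicitly use $e^{\beta s}\le e^{\beta(s+\zeta_i(s))}$ (since $\zeta_i\ge0$) before applying the change-of-variable bound; this step is worth making explicit. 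These are cosmetic issues---the argument is sound and matches the cited reference.
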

	
	Consider the following BSVIE
	\begin{equation}\label{BSVIE}
		\tilde{Y}(t)=\tilde{\psi}(t)+\int_t^T\check{g}\big(t,s,\tilde{Y}(s),\tilde{Z}(t,s),\tilde{Z}(s,t)\big)ds-\int_t^T\tilde{Z}(t,s)dW(s),\ t\in[0,T],
	\end{equation}
	where $\check{g}$ is the given function satisfying
	
	(\textbf{H2.6}) $\check{g}$ is $\mathcal{B}\big([0,T]^2\times\mathbb{R}^n\times\mathbb{R}^{n}\times\mathbb{R}^{n} \big)\otimes\mathcal{F}_T$-measurable such that $s\mapsto \check{g}\big(t,s,y,z,\eta\big)$ is progressively measurable and for all $ 0\le t\le s\le T,\ y_1, y_2, z_1,z_2,\eta_1,\eta_2\in\mathbb{R}^{n}$, it holds true
	\begin{equation*}
		\big|\check{g}\big(t,s,y_1,z_1,\eta_1\big)-\check{g}\big(t,s,y_2,z_2,\eta_2\big)\big|\le L(t,s)\big(|y_1-y_2|+|z_1-z_2|+|\eta_1-\eta_2| \big),~a.s.,   	
	\end{equation*}
	where $L$ is a deterministic function such that for some $\varepsilon >0$,
	\begin{equation*}
		\sup_{t\in[0,T]}\int_t^TL(t,s)^{2+\varepsilon}ds<\infty.
	\end{equation*}
	
	In addition, for all $(t,y,z,\eta)\in [0,T]\times\mathbb{R}^n\times\mathbb{R}^{n}\times\mathbb{R}^{n}$, one has
	\begin{equation*}
		\mathbb{E}\int_0^T\bigg(\int_t^T|\check{g}(t,s,0,0,0)|ds \bigg)^2dt<\infty.
	\end{equation*}

	\begin{mypro}\label{BSVIE exist}
		(see \cite{YJM08}) Suppose that (\textbf{H2.6}) holds. Then, for any $\mathcal{B}\big([0,T]\big)\otimes\mathcal{F}_T$-measurable process $\tilde{\psi}(\cdot)$ such that $\mathbb{E}\int_0^T|\tilde{\psi}(t)|^2dt<\infty$, the BSVIE (\ref{BSVIE}) admits a unique adapted solution pair $\big(\tilde{Y}(\cdot), \tilde{Z}(\cdot,\cdot)\big)\in L^2_\mathbb{F} (0,T;\mathbb{R}^n)\times L^2\big(0,T;L^2_\mathbb{F}(0,T;\mathbb{R}^{n})\big)$ satisfying 
		\begin{equation*}
			\tilde{Y}(t)=\mathbb{E}^{\mathcal{F}_s}[\tilde{Y}(t)]+\int_s^t\tilde{Z}(t,r)dW(r),\ a.e.~t\in [s,T].
		\end{equation*}
	\end{mypro}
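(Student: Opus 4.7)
The plan is to establish existence and uniqueness via a Banach fixed-point argument in a suitably weighted Hilbert space. First I fix $\beta>0$ (to be chosen later) and endow
\[
\mathcal{H}_\beta:=L^2_\mathbb{F}(0,T;\mathbb{R}^n)\times L^2\big(0,T;L^2_\mathbb{F}(0,T;\mathbb{R}^n)\big)
\]
with the norm $\|(Y,Z)\|_\beta^2:=\mathbb{E}\int_0^T e^{\beta t}\big(|Y(t)|^2+\int_0^T|Z(t,s)|^2\,ds\big)\,dt$. Given $(Y,Z)\in\mathcal{H}_\beta$, I define the map $\Phi(Y,Z)=(\hat Y,\hat Z)$ as follows: for each fixed $t\in[0,T]$, the pair $\big(\hat Y(t),\hat Z(t,\cdot)|_{[t,T]}\big)$ is the unique solution of the classical BSDE on $[t,T]$
\[
\hat Y(t)=\tilde\psi(t)+\int_t^T\check g\big(t,s,Y(s),\hat Z(t,s),Z(s,t)\big)\,ds-\int_t^T\hat Z(t,s)\,dW(s),
\]
whose existence is guaranteed by (\textbf{H2.6}) and the Pardoux-Peng theory, since for fixed $t$ the generator is Lipschitz in $(\hat y,\hat z)$ while $s\mapsto Y(s)$ and $s\mapsto Z(s,t)$ are $\mathcal{F}_s$-adapted inputs. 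For $s\in[0,t)$, I take $\hat Z(t,s)$ to be the integrand in the Brownian martingale representation of the $\mathcal{F}_t$-measurable random variable $\hat Y(t)$; this construction immediately yields the final identity $\hat Y(t)=\mathbb{E}^{\mathcal{F}_s}[\hat Y(t)]+\int_s^t\hat Z(t,r)\,dW(r)$ for the candidate solution.

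Second, I verify that $\Phi$ sends $\mathcal{H}_\beta$ into itself using the standard BSDE $L^2$ a priori estimate applied with parameter $t$, combined with Fubini's theorem and the integrability hypotheses on $\tilde\psi$ and $\check g(\cdot,\cdot,0,0,0)$. To show $\Phi$ is a contraction, take $(Y^i,Z^i)\in\mathcal{H}_\beta$ with images $(\hat Y^i,\hat Z^i)=\Phi(Y^i,Z^i)$ for $i=1,2$, apply It\^o's formula to $s\mapsto e^{\beta s}|\hat Y^1(t)-\hat Y^2(t)|^2$ (extended as a conditional-expectation martingale on $[t,T]$), invoke the Lipschitz bound of (\textbf{H2.6}) together with Young's inequality on the cross terms, and integrate in $t\in[0,T]$.

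The main obstacle is the $Z(s,t)$ term in $\check g$, whose two arguments are swapped with respect to the natural backward direction of the BSDE; consequently its contribution to the contraction estimate is an off-diagonal term of the form $\mathbb{E}\int_0^T e^{\beta t}\int_t^T L(t,s)^2|Z^1(s,t)-Z^2(s,t)|^2\,ds\,dt$. Swapping the order of integration by Fubini and then applying H\"older's inequality on the inner integral, the higher integrability $\sup_{t\in[0,T]}\int_t^T L(t,s)^{2+\varepsilon}ds<\infty$ from (\textbf{H2.6}) is precisely what allows this off-diagonal term to be absorbed, leaving behind a coefficient that can be made arbitrarily small by choosing $\beta$ large enough. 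Combining all estimates gives $\|\Phi(Y^1,Z^1)-\Phi(Y^2,Z^2)\|_\beta\leq\theta\|(Y^1-Y^2,Z^1-Z^2)\|_\beta$ with $\theta<1$ for $\beta$ sufficiently large, and Banach's fixed-point theorem produces the unique solution pair $(\tilde Y,\tilde Z)\in\mathcal{H}_\beta$.
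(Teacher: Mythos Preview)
The paper does not prove this proposition; it is quoted directly from Yong~\cite{YJM08} as a black-box result, so there is no ``paper's own proof'' to compare against beyond the citation itself.

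Your overall architecture---defining $\hat Z(t,\cdot)$ on $[t,T]$ via a parametrised BSDE and on $[0,t)$ via the martingale representation of $\hat Y(t)$, then running a fixed-point argument---is exactly the $M$-solution framework Yong introduces, so the strategy is sound and matches the cited source in spirit.

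Where your sketch has a genuine gap is the contraction estimate for the swapped term $Z(s,t)$. You assert that after Fubini and H\"older the coefficient in front of this term ``can be made arbitrarily small by choosing $\beta$ large enough,'' but this is not justified. After Fubini the off-diagonal term carries the weight $e^{\beta t}$ with $t<s$, i.e.\ a weight \emph{smaller} than the $e^{\beta s}$ appearing in the norm of $Z(s,\cdot)$; increasing $\beta$ widens this gap in your favour, but it does not produce a vanishing coefficient, because the factor $L(t,s)^2$ is not pointwise bounded---only $\sup_t\int_t^T L(t,s)^{2+\varepsilon}ds<\infty$ is assumed. Applying H\"older to separate $L(t,s)^2$ from $|\Delta Z(s,t)|^2$ would require higher-moment integrability of $\Delta Z$ that you do not have. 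Yong's actual argument in \cite{YJM08} does not rely on a single global weighted-norm contraction; instead he first obtains a contraction on a sufficiently small terminal interval $[T-h,T]$ (where the smallness of $h$, combined with the $L^{2+\varepsilon}$ bound, controls the $Z(s,t)$ contribution), and then extends the solution backward step by step. Your sketch would need either this small-interval-plus-continuation structure or a more careful choice of weight to close.
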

	
	In what follows, we recall several concepts related to the generalized Malliavin calculus associated with Brownian motion, which will be used in the subsequent analysis.
	
	Since we prefer not to conduct a lengthy survey on the literature of Malliavin calculus, let us just list some books \cite{DN06, SS05, NOP09}, where good surveys and tutorials along with extensive references can be found. Indeed, the framework developed by Aase et al. \cite{AOPU00} has been extended from the classical Malliavin differentiability space $\mathbb{D}_{1,2}$ to $L^2_{\mathcal{F}_T}(\Omega;\mathbb{R}^n)$,
	where $\mathbb{D}_{1,2}$ denotes the space of Malliavin differentiable ${\mathcal{F}_T}$-measurable random variables. The extension is such that for all $F\in L^2_{\mathcal{F}_T}(\Omega;\mathbb{R}^n)$, the following assertions are satisfied
	
	(i) $D_tF\in(\mathcal{S})^\ast$, where $(\mathcal{S})^\ast\supseteq L^2_{\mathcal{F}_T}(\Omega;\mathbb{R}^n)$ denotes the Hida space of stochastic distributions.
	
	(ii) The map $(t,\omega)\mapsto\mathbb{E}^{\mathcal{F}_t}[D_tF]\in L^2_{\mathbb{F}}(0,T;\mathbb{R}^n)$.
	
	(iii) In addition, the following  \emph{generalized Clark-Ocone Theorem} holds
	\begin{equation*}
		F=\mathbb{E}[F]+\int_0^T\mathbb{E}^{\mathcal{F}_t}[D_tF]dW(t).
	\end{equation*}
	
	Then, combining with It\^o's isometry and the Clark-Ocone Theorem result in 
	\begin{equation}\label{Clarke-Ocone theorem}
		\mathbb{E}\Big[\int_0^T\mathbb{E}\big[D_tF|\mathcal{F}_t\big]^2dt\Big]=\mathbb{E}\Big[\Big(\int_0^T\mathbb{E}^{\mathcal{F}_t}[D_tF]dW(t)\Big)^2\Big]=\mathbb{E}\big[F^2-\mathbb{E}[F]^2\big].
	\end{equation}
	
	\begin{mypro}
		(see \cite{NOP09}) (Generalized duality formula) Suppose that $F\in L^2_{\mathcal{F}_T}(\Omega;\mathbb{R}^n)$ with $\tilde{\varphi}(\cdot)\in L^2_{\mathbb{F}}(0,T;\mathbb{R}^n)$, then it holds 
		\begin{equation}\label{duality formula}
			\mathbb{E}\Big[F\int_0^T\tilde{\varphi}(t)dW(t)\Big]=\mathbb{E}\Big[\int_0^T \mathbb{E}^{\mathcal{F}_t}[D_tF\big]\tilde{\varphi}(t)dt\Big].
		\end{equation}
	\end{mypro}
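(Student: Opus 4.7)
The plan is to combine the generalized Clark-Ocone representation just stated with the bilinear (polarized) form of It\^o's isometry. Both sides of the claimed identity are bilinear in $(F,\tilde{\varphi})$ with $F\in L^2_{\mathcal{F}_T}(\Omega;\mathbb{R}^n)$ and $\tilde{\varphi}(\cdot)\in L^2_{\mathbb{F}}(0,T;\mathbb{R}^n)$, so essentially the whole argument amounts to an $L^2$ orthogonality computation once $F$ has been written as a Brownian stochastic integral.

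Concretely, I would first apply the generalized Clark-Ocone formula to obtain
$$F=\mathbb{E}[F]+\int_0^T\mathbb{E}^{\mathcal{F}_t}[D_tF]dW(t),$$
noting that by assertion (ii) preceding the proposition the integrand $\mathbb{E}^{\mathcal{F}_t}[D_tF]$ is a genuine element of $L^2_{\mathbb{F}}(0,T;\mathbb{R}^n)$, so the stochastic integral is well defined in the classical It\^o sense. Next I would multiply through by $\int_0^T\tilde{\varphi}(t)dW(t)$ and take expectation on both sides. The constant contribution from $\mathbb{E}[F]$ drops out because $\tilde{\varphi}\in L^2_{\mathbb{F}}$ makes $\int_0^T\tilde{\varphi}(t)dW(t)$ a centered random variable. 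The remaining cross term between the two stochastic integrals is evaluated by the polarized It\^o isometry,
$$\mathbb{E}\Big[\int_0^T\mathbb{E}^{\mathcal{F}_t}[D_tF]dW(t)\cdot\int_0^T\tilde{\varphi}(t)dW(t)\Big]=\mathbb{E}\Big[\int_0^T\mathbb{E}^{\mathcal{F}_t}[D_tF]\tilde{\varphi}(t)dt\Big],$$
which is exactly the desired right-hand side. This is just the polarization of the energy identity (\ref{Clarke-Ocone theorem}) already recorded in the excerpt, so no new analytic ingredient is needed; finiteness of both members is ensured by Cauchy-Schwarz together with the two $L^2$ assumptions.

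The main conceptual obstacle, rather than any hard computation, is justifying that the Clark-Ocone representation used above is valid for a general $F\in L^2_{\mathcal{F}_T}(\Omega;\mathbb{R}^n)$ that is not assumed to be Malliavin differentiable in the classical sense $\mathbb{D}_{1,2}$: in this generalized setting $D_tF$ lives only in the Hida space $(\mathcal{S})^*$ of stochastic distributions. The saving feature, recorded in assertion (ii), is that the conditional expectation $\mathbb{E}^{\mathcal{F}_t}[D_tF]$ is nevertheless a bona fide square-integrable $\mathbb{F}$-adapted process, which is precisely what is needed to feed into It\^o's isometry. Once this identification is taken for granted, as it is in the excerpt via the Aase-\O ksendal-Proske-Ub\o e extension, the duality formula follows immediately from the three lines above. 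If one preferred a self-contained argument, the alternative would be to verify the identity first for $F\in\mathbb{D}_{1,2}$ and $\tilde{\varphi}$ simple and then pass to the limit by the density of $\mathbb{D}_{1,2}$ in $L^2_{\mathcal{F}_T}$; the bilinearity and continuity in $L^2\times L^2$ of both sides make the extension automatic.
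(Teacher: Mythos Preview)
Your proof is correct and follows the standard route: apply the generalized Clark-Ocone representation to $F$, multiply by the It\^o integral of $\tilde{\varphi}$, take expectations, and invoke the polarized It\^o isometry. Note, however, that the paper does not actually prove this proposition; it is simply quoted from \cite{NOP09} with the tag ``(see \cite{NOP09})'' and no argument is given. Your write-up is essentially the classical derivation one finds in that reference, so there is nothing to compare against in the paper itself.
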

	
	\begin{mypro}\label{pro2.5}
		see(\cite{KPQ97}) For any given terminal condition $\gamma\in L^2_{\mathcal{F}_T}(\Omega,\mathbb{R}^n)$, assume that the adapted process pair  $\big(\breve{Y}(\cdot),\breve{Z}(\cdot)\big)\in L^2\big(\Omega;C([0,T];\mathbb{R}^n)\big)\times L^2\big(\Omega;0,T;\mathbb{R}^{n}\big)$ is the solution of the following BSDE
		\begin{equation}\label{EPQ1997 BSDE}
			\left\{\begin{aligned}
				-d\breve{Y}(t)&=\breve{g}\big(t,\breve{Y}(t),\breve{Z}(t)\big)dt+\breve{Z}(t)dW(t),\ t\in[0,T],\\
				\breve{Y}(T)&=\gamma.
			\end{aligned}
			\right.\end{equation}
		Then, it holds 
		\begin{equation*}
			\breve{Z}(t)=D_t\breve{Y}(t),
		\end{equation*}
		where $D$ denotes Malliavin derivative.
	\end{mypro}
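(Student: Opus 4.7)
The plan is to derive the identity $\breve{Z}(t)=D_t\breve{Y}(t)$ by applying the Malliavin derivative $D_r$ to the BSDE~\eqref{EPQ1997 BSDE} and reading off the boundary term produced by the Malliavin derivative of the It\^o stochastic integral, along the diagonal $r=t$. This is the standard Pardoux-Peng / El Karoui-Peng-Quenez scheme, and the only real work is to justify the trace.

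First I would strengthen the data to Malliavin-differentiable objects: $\gamma\in\mathbb{D}_{1,2}$ together with $\breve{g}$ Lipschitz in $(y,z)$ whose $\omega$-dependence lies in $\mathbb{D}_{1,2}$ in a suitable $L^2$-sense. Under these hypotheses, a Picard iteration for~\eqref{EPQ1997 BSDE}, combined with uniform estimates in the Malliavin-Sobolev norm and the closability of $D$, shows that the unique adapted solution satisfies $(\breve{Y},\breve{Z})\in\mathbb{D}_{1,2}$ componentwise.

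Next, rewrite~\eqref{EPQ1997 BSDE} in integral form between $s$ and $t$ with $0\le s\le t\le T$,
\begin{equation*}
\breve{Y}(t)=\breve{Y}(s)-\int_s^t\breve{g}(u,\breve{Y}(u),\breve{Z}(u))du-\int_s^t\breve{Z}(u)dW(u),
\end{equation*}
and apply $D_r$ for $s<r<t$. The term $D_r\breve{Y}(s)$ vanishes since $\breve{Y}(s)$ is $\mathcal{F}_s$-measurable with $s<r$; the chain rule handles the $du$-integral through $D_r\breve{g}(u,\breve{Y},\breve{Z})=\partial_y\breve{g}\,D_r\breve{Y}(u)+\partial_z\breve{g}\,D_r\breve{Z}(u)$; and the standard Malliavin rule for the It\^o integral,
\begin{equation*}
D_r\int_s^t\breve{Z}(u)dW(u)=\breve{Z}(r)+\int_r^tD_r\breve{Z}(u)dW(u),
\end{equation*}
produces the crucial boundary contribution $\breve{Z}(r)$. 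This identifies $(D_r\breve{Y}(\cdot),D_r\breve{Z}(\cdot))$ as the adapted solution on $[r,T]$ of a linear BSDE. Letting $t\downarrow r$ kills both integrals over $[r,t]$, leaving (up to the sign convention of~\eqref{EPQ1997 BSDE}) the diagonal relation $D_r\breve{Y}(r)=\breve{Z}(r)$, which is precisely the claim.

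The main obstacle is the rigorous interpretation of the trace $D_r\breve{Y}(r)$: Malliavin derivatives are a priori only elements of $L^2(\Omega\times[0,T])$, and the diagonal has Lebesgue measure zero. I would resolve this by constructing a continuous modification of $t\mapsto D_r\breve{Y}(t)$ from the a priori $L^2$-estimate for the linear BSDE it satisfies, together with $L^2$-uniform convergence to zero of the $du$ and $dW$ integrals on $[r,t]$ as $t\downarrow r$. A secondary technical point is propagating Malliavin differentiability of $\breve{Z}$ (not just $\breve{Y}$) through the Picard scheme, which requires uniform $\mathbb{D}_{1,2}$-bounds on the iterates $\breve{Z}^{(n)}$; this is where the bounded-derivative assumption on $\breve{g}$ together with the closability of $D$ on $L^2$ plays its essential role.
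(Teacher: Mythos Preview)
The paper does not supply its own proof of this proposition; it merely cites \cite{KPQ97} and states the result. Your outline is correct and is precisely the argument given in that reference: propagate Malliavin differentiability through the Picard iterates, differentiate the integral form of the BSDE using the commutation rule $D_r\int_s^t\breve{Z}\,dW=\breve{Z}(r)+\int_r^t D_r\breve{Z}\,dW$, identify $(D_r\breve{Y},D_r\breve{Z})$ as the solution of a linear BSDE on $[r,T]$, and read off the trace at $t=r$.
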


\normalsize

\section{Transform SDDE into SVIE}

In this section, we derive the variational equation to be studied, then the \textbf{Problem (P)} is converted into an optimal control problem without delay, which is driven by an SVIE. Similar transformation also presented in \cite{HY23, MSWZ25}.

The following assumptions are imposed in this work.

(\textbf{H3.1})
(i) The map $(x,y,z,\kappa,u,\mu,\nu,\lambda)\mapsto b=b(t,x,y,z,\kappa,u,\mu,\nu,\lambda),\  \sigma=\sigma(t,x,y,z,\kappa,u,\\\mu,\nu,\lambda),\  l=l(t,x,y,z,\kappa,u,\mu,\nu,\lambda),\  h=h(t,x,y,z,\kappa)$ are continuous differentiable in $(x,y,z,\kappa,\\u,\mu,\nu,\lambda)$. They with their derivatives $f_\varrho$ are continuous and uniformly bounded in $(x,y,z,\kappa,u,\mu,\\\nu,\lambda)$, where $\varrho=x,y,z,\kappa,u,\mu,\nu,\lambda$, $f=b,\sigma,l,h$.

(ii) $|b(t,0,0,0,0,u,\mu,\nu,\lambda)|+|\sigma(t,0,0,0,0,u,\mu,\nu,\lambda)|+|l(t,0,0,0,0,u,\mu,\nu,\lambda)|+|h(0,0,0,0)|+|l_{\varrho}(t,0,0,0,0,u,\mu,\nu,\lambda)|+|h_{\varrho}(0,0,0,0)|\le C$ for some $C$, where $\varrho=x,y,z,\kappa,u,\mu,\nu,\lambda$.

(iii) The initial trajectory of the state $\xi(\cdot)$ is a deterministic continuous function, and the initial trajectory of the control $\varsigma(\cdot)$ is a deterministic square integrable function. The autoregressive kernel $\phi_1(\cdot,\cdot)$ and the moving average kernel $\psi_1(\cdot,\cdot)$ belong to $L^{\infty}\big([t_0,T];L_{\mathbb{F}}^{\infty}([t_0,T];\mathbb{R}^{n\times n})\big),$ and $\phi_2(\cdot,\cdot),\psi_2(\cdot,\cdot)\in L^{\infty}\big([t_0,T];L_{\mathbb{F}}^{\infty}\big(\Omega,C( [t_0,T];\mathbb{R}^{n\times n})\big)\big)$.

(iv) There exists a constant $C$ such that
$\big|f(t,x,y,z,\kappa,u,\mu,\nu,\lambda)\big|\le C\big(|x|+|y|+|z|+|\kappa|+|u|+|\mu|+|\nu|+|\lambda|\big)$ for any $x,y,z,\kappa, u,\mu,\nu,\lambda$, where $f=b,\sigma,l,h$.

(v) $b=b(t,x,y,z,\kappa,u,\mu,\nu,\lambda),\  \sigma=\sigma(t,x,y,z,\kappa,u,\mu,\nu,\lambda),\  l=l(t,x,y,z,\kappa,u,\mu,\nu,\lambda)$ is $\mathbb{F}$-progressively measurable for any $x,y,z,\kappa,u,\mu,\nu,\lambda$. $h=h(x,y,z,\kappa)$ is $\mathcal{F}_T$-measurable for any $x,y,z,\kappa$.

Under (\textbf{H3.1}), the SDDE (\ref{state equation}) admits unique solution by Proposition \ref{themSSDE}. In addition, the cost function (\ref{cost1}) is well-define and \textbf{Problem (P)} is meaningful.

The convexity of the control domain $U$ allows us to employ the convex variation technique in the study of \textbf{Problem (P)}. Suppose that $u^{\ast}(\cdot)$ is an optimal control with $x^{\ast}(\cdot)$ corresponding to the optimal trajectory. Let $v(\cdot)$ be such that $u^{\ast}(\cdot)+v(\cdot)\in\mathcal{U}_{ad}$, then the variational control $u^{\rho}(\cdot)$ defined below also belongs to $\mathcal{U}_{ad}$:
\begin{equation}\label{u rho}
	u^{\rho}(\cdot)=u^{\ast}(\cdot)+\rho v(\cdot),\ 0\le\rho\le 1,
\end{equation}
and the corresponding trajectory is denoted as $x^{\rho}(\cdot)$. 

For simplification, we use the following notations in this paper:
\begin{equation*}\begin{aligned}
		f^\rho(t):&=f\big(t,x^\rho(t),y^\rho(t),z^\rho(t),\kappa^\rho(t),u^\rho(t),\mu^\rho(t),\nu^\rho(t),\kappa^\rho(t)\big),\\
		f(t):&=f\big(t,x^\ast(t),y^\ast(t),z^\ast(t),\kappa^\ast(t),u^\ast(t),\mu^\ast(t),\nu^\ast(t),\kappa^\ast(t)\big),\\	
		f_{\varrho}(t):&=f_{\varrho}\big(t,x^\ast(t),y^\ast(t),z^\ast(t),\kappa^\ast(t),u^\ast(t),\mu^\ast(t),\nu^\ast(t),\kappa^\ast(t)\big),	
\end{aligned}\end{equation*}
where $f=b,\sigma,l,h$ and $\varrho=x,y,z,\kappa,u,\mu,\nu,\lambda$.

Inspired by \cite{CW10}, we introduce the variational equation:
\begin{equation}\left\{\begin{aligned}\label{variational equation}
		d\hat{x}(t)&=\Big[b_x(t)\hat{x}(t)+b_y(t)\hat{y}(t)+b_z(t)\hat{z}(t)+b_{\kappa}(t)\hat{\kappa}(t)+b_u(t)v(t)+b_{\mu}(t)v(t-\delta)\\
		&\qquad+b_{\nu}(t)\int_{t_0}^t\phi_2(t,s)v(s)ds+b_{\lambda}(t)\int_{t_0}^t\psi_2(t,s)v(s)dW(s)\Big]dt\\
		&\quad+\Big[\sigma_x(t)\hat{x}(t)+\sigma_y(t)\hat{y}(t)+\sigma_z(t)\hat{z}(t)+\sigma_{\kappa}(t)\hat{\kappa}(t)+\sigma_u(t)v(t)+\sigma_{\mu}(t)v(t-\delta)\\
		&\qquad+\sigma_{\nu}(t)\int_{t_0}^t\phi_2(t,s)v(s)ds+\sigma_{\lambda}(t)\int_{t_0}^t\psi_2(t,s)v(s)dW(s)\Big]dW(t),\ t\in[t_0,T],\\
		\hat{x}(t)&=0,\ t\in[t_0-\delta,t_0],
	\end{aligned}\right.\end{equation}
which admits a unique solution $\hat{x}(\cdot)$ by the assumption \textbf{(H3.1)} together with Proposition \ref{themSSDE}. 

We notice that, for the variational equation (\ref{variational equation}), when $\phi_2(\cdot,s),\ \psi_2(\cdot,s)\equiv0,\ s< t_0$ holds, 
then it can be viewed as an extension of the SDDE as follows:
\begin{equation}\left\{\begin{aligned}\label{exvariational equation}
		d\hat{x}(t)&=\Big[b_x(t)\hat{x}(t)+b_y(t)\hat{y}(t)+b_z(t)\check{z}(t)+b_{\kappa}(t)\check{\kappa}(t)+b_u(t)v(t)+b_{\mu}(t)v(t-\delta)\\
		&\qquad+b_{\nu}(t)\int_{t-\delta}^t\phi_2(t,s)v(s)ds+b_{\lambda}(t)\int_{t-\delta}^t\psi_2(t,s)v(s)dW(s)\Big]dt\\
		&\quad+\Big[\sigma_x(t)\hat{x}(t)+\sigma_y(t)\hat{y}(t)+\sigma_z(t)\check{z}(t)+\sigma_{\kappa}(t)\check{\kappa}(t)+\sigma_u(t)v(t)+\sigma_{\mu}(t)v(t-\delta)\\
		&\qquad+\sigma_{\nu}(t)\int_{t-\delta}^t\phi_2(t,s)v(s)ds+\sigma_{\lambda}(t)\int_{t-\delta}^t\psi_2(t,s)v(s)dW(s)\Big]dW(t),\ t\in[t_0,T],\\
		\hat{x}(t)&=0,\ t\in[t_0-\delta,t_0],\\
		\check{z}(t)&=\int_{t-\delta}^t\phi_1(t,s)\hat{x}(s)ds,\ \check{\kappa}(t)=\int_{t-\delta}^t\psi_1(t,s)\hat{x}(s)dW(s),\ t\in[t_0,T],
	\end{aligned}\right.\end{equation}
where $\hat{y}(\cdot)=\hat{x}(\cdot-\delta)$. In fact,
\begin{equation*}\begin{aligned}
		\check{z}(t)=\int_{t_0}^t\tilde{\phi}_1(t,s)\hat{x}(s)ds,&\quad \check{\kappa}(t)=\int_{t_0}^t\tilde{\psi}_1(t,s)\hat{x}(s)dW(s),\\
		\int_{t-\delta}^t\phi_2(t,s)v(s)ds=\int_{t_0}^t\tilde{\phi}_2(t,s)v(s)ds,&\quad \int_{t-\delta}^t\psi_2(t,s)v(s)dW(s)=\int_{t_0}^t\tilde{\psi}_2(t,s)v(s)dW(s),
\end{aligned}\end{equation*}
where
\begin{equation*}\begin{aligned}
		\tilde{\phi}_1(t,s)&\equiv\big[\mathbb{I}_{[t_0,t_0+\delta)}(t)+\mathbb{I}_{[t+\delta,T]}(t)\mathbb{I}_{[t-\delta,t)}(s)\big]\phi_1(t,s),\\
		\tilde{\psi}_1(t,s)&\equiv\big[\mathbb{I}_{[t_0,t_0+\delta)}(t)+\mathbb{I}_{[t+\delta,T]}(t)\mathbb{I}_{[t-\delta,t)}(s)  \big]\psi_1(t,s),\\
		\tilde{\phi}_2(t,s)&\equiv\big[\mathbb{I}_{[t_0,t_0+\delta)}(t)+\mathbb{I}_{[t+\delta,T]}(t)\mathbb{I}_{[t-\delta,t)}(s)  \big]\phi_2(t,s),\\	
		\tilde{\psi}_2(t,s)&\equiv\big[\mathbb{I}_{[t_0,t_0+\delta)}(t)+\mathbb{I}_{[t+\delta,T]}(t)\mathbb{I}_{[t-\delta,t)}(s)  \big]\psi_2(t,s).
\end{aligned}\end{equation*}
In view of the above analysis, (\ref{state equation}) is termed as a \emph{(generalized) controlled SDDE with extended mixed delays}. It is worth noting that the case $t_0+\delta\le T$ is included in our formulation.

The following estimate is developed in the spirit of Lemma 4.3 in \cite{DMOR16} and Lemma 3.1 in \cite{MSWZ25}.

\begin{mylem}\label{lem3.1}
	Let \textbf{(H3.1)} hold. Then the following result is obtained
	\begin{equation}
		\lim_{\rho\to 0}\mathbb{E}\Big[\sup_{t\in[t_0,T]}|\check{X}(t)|^2\Big]=0,
	\end{equation}
	where $\check{X}(t):=\frac{x^{\rho}(t)-x^{\ast}(t)}{\rho}-\hat{x}(t),\ t\in[t_0,T].$
\end{mylem}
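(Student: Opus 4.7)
The plan is to first derive an SDE for $\check{X}(t)$ by expanding $b^\rho-b$ and $\sigma^\rho-\sigma$ via the mean value theorem, and then obtain the desired convergence by applying the B-D-G inequality together with Gronwall's lemma, exactly in the spirit of Lemma 4.3 in \cite{DMOR16}. Writing $\Delta x^\rho(t):=x^\rho(t)-x^*(t)$ and using $\frac{\Delta x^\rho}{\rho}=\check X+\hat x$, the fundamental theorem of calculus applied to each coefficient gives, for $f=b,\sigma$,
\begin{equation*}
\frac{f^\rho(s)-f(s)}{\rho}=\sum_{\varrho}\int_0^1 f_\varrho^{\rho,\lambda}(s)\,\Delta_\varrho^\rho(s)\,d\lambda,
\end{equation*}
where $f_\varrho^{\rho,\lambda}(s)$ denotes the derivative evaluated at the convex combination $\lambda(x^\rho,y^\rho,\ldots)+(1-\lambda)(x^*,y^*,\ldots)$, and $\Delta_\varrho^\rho(s)$ is the normalised increment in the corresponding argument (e.g.\ $\check X(s)+\hat x(s)$ for $\varrho=x$, $\int_{t_0}^s\psi_1(s,r)(\check X(r)+\hat x(r))\,dW(r)$ for $\varrho=\kappa$, and $v(s),v(s-\delta),\int_{t_0}^s\phi_2(s,r)v(r)dr,\int_{t_0}^s\psi_2(s,r)v(r)dW(r)$ for $\varrho=u,\mu,\nu,\lambda$).

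Subtracting the variational equation (\ref{variational equation}) from $\rho^{-1}d\Delta x^\rho$, I would split each integrand into a \emph{linear} part, in which I multiply $f_\varrho(s)$ by $\check X$-type quantities, and an \emph{error} part of the form
\begin{equation*}
A^\rho(s):=\sum_\varrho\int_0^1\bigl[f_\varrho^{\rho,\lambda}(s)-f_\varrho(s)\bigr]\Delta_\varrho^{\rho,\mathrm{opt}}(s)\,d\lambda,
\end{equation*}
where $\Delta_\varrho^{\rho,\mathrm{opt}}(s)$ is the $\varrho$-increment evaluated along the optimal trajectory plus the $v$-perturbation (so it is $\rho$-independent in law). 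The a priori estimate for the linearised SDDE from Proposition \ref{themSSDE} and the boundedness in \textbf{(H3.1)} ensure $\sup_\rho \mathbb{E}[\sup_{t\in[t_0,T]}|\hat x(t)|^2+\sup_{t\in[t_0,T]}|\check X(t)+\hat x(t)|^2]<\infty$, and dominated convergence, combined with the continuity and uniform boundedness of $f_\varrho$, then yields $\mathbb{E}\int_{t_0}^T|A^\rho(s)|^2\,ds\to 0$ as $\rho\to 0$.

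Next I would estimate $\mathbb{E}[\sup_{s\in[t_0,t]}|\check X(s)|^2]$. For the point delay term $\check X(s-\delta)$ the trivial bound $\int_{t_0}^t\mathbb{E}[\sup_{r\le s}|\check X(r-\delta)|^2]ds\le \int_{t_0}^t\mathbb{E}[\sup_{r\le s}|\check X(r)|^2]ds$ works (since $\check X\equiv 0$ on $[t_0-\delta,t_0]$); the extended distributed delay is controlled by Cauchy--Schwarz and the $L^\infty$-bound on $\phi_1$. The key step is the extended noisy memory: for the $\kappa$-contributions I would swap the order of integration and apply Itô's isometry,
\begin{equation*}
\mathbb{E}\Bigl|\int_{t_0}^s\psi_1(s,r)\check X(r)\,dW(r)\Bigr|^2\le \|\psi_1\|_\infty^2\int_{t_0}^s\mathbb{E}|\check X(r)|^2\,dr,
\end{equation*}
and then absorb the outer $ds$-integral (for the drift) or invoke B-D-G (for the diffusion) to bound the whole expression by $C\int_{t_0}^t\mathbb{E}[\sup_{r\le s}|\check X(r)|^2]ds$. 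After applying B-D-G once more to the overall stochastic integral in the $\check X$-equation, I arrive at
\begin{equation*}
\mathbb{E}\Bigl[\sup_{s\in[t_0,t]}|\check X(s)|^2\Bigr]\le C\int_{t_0}^t\mathbb{E}\Bigl[\sup_{r\in[t_0,s]}|\check X(r)|^2\Bigr]ds+\varepsilon(\rho),
\end{equation*}
with $\varepsilon(\rho)\to 0$. Gronwall's inequality then finishes the proof. The only delicate point is the extended noisy memory: I must be careful to apply B-D-G \emph{after} exchanging the Lebesgue and stochastic integrals, so that the double stochastic integral is not estimated naïvely; this is precisely where the argument of \cite{DMOR16} is adapted to the present setting.
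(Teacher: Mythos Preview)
Your proposal is correct and follows essentially the same route as the paper: derive the SDE for $\check X$, split drift and diffusion into a linear-in-$\check X$ part plus a remainder that vanishes as $\rho\to 0$, control the delay terms by It\^o isometry/B-D-G and Fubini, and close with Gronwall. Two small remarks: (i) in your decomposition the linear coefficients multiplying $\check X$-type quantities should be the \emph{averaged} derivatives $\int_0^1 f_\varrho^{\rho,\lambda}d\lambda$ (the paper's $A^\rho,B^\rho,\ldots$), not $f_\varrho$ itself---otherwise your error term would still contain a $\check X$ contribution; (ii) the paper reaches the Gronwall inequality by applying It\^o's formula to $|\check X(t)|^2$ rather than estimating the integral form directly via B-D-G, but this is a purely cosmetic difference.
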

\begin{proof}
	By the definition of $\check{X}(t)$, we can derive 
	\begin{equation}\left\{\begin{aligned}
			d\check{X}(t)=&\Big[A^{\rho}(t)\check{X}(t)+B^{\rho}(t)\check{X}(t-\delta)+C^{\rho}(t)\int_{t_0}^t\phi_1(t,s)\check{X}(s)ds\\
			&\ +D^{\rho}(t)\int_{t_0}^t\psi_1(t,s)\check{X}(s)dW(s)+R(t)\Big]dt\\
			&+\Big[I^{\rho}(t)\check{X}(t)+K^{\rho}(t)\check{X}(t-\delta)+L^{\rho}(t)\int_{t_0}^t\phi_1(t,s)\check{X}(s)ds\\
			&\ +M^{\rho}(t)\int_{t_0}^t\psi_1(t,s)\check{X}(s)dW(s)+S(t)\Big]dW(t),\ t\in [t_0,T],\\
			\check{X}(t)=&\ 0,\ t\in[t_0-\delta, t_0],
		\end{aligned}\right.\end{equation}
	where
	\begin{equation*}\begin{aligned}
			A^{\rho}(t):=&\int_0^1 b_x\big(t,x^{\ast}(t)+\theta[x^{\rho}(t)-x^{\ast}(t)],y^{\rho}(t),z^{\rho}(t),\kappa^{\rho}(t),u^{\rho}(t),\mu^{\rho}(t),\nu^{\rho}(t),\lambda^{\rho}(t)\big)d\theta,\\
			B^{\rho}(t):=&\int_0^1 b_y\big(t,x^{\ast}(t),y^{\ast}(t)+\theta[y^{\rho}(t)-y^{\ast}(t)],z^{\rho}(t),\kappa^{\rho}(t),u^{\rho}(t),\mu^{\rho}(t),\nu^{\rho}(t),\lambda^{\rho}(t)\big)d\theta,\\
			C^{\rho}(t):=&\int_0^1 b_z\big(t,x^{\ast}(t),y^{\ast}(t),z^{\ast}(t)+\theta[z^{\rho}(t)-z^{\ast}(t)],\kappa^{\rho}(t),u^{\rho}(t),\mu^{\rho}(t),\nu^{\rho}(t),\lambda^{\rho}(t)\big)d\theta,\\
	\end{aligned}\end{equation*}
	\begin{equation*}\begin{aligned}
			D^{\rho}(t):=&\int_0^1 b_{\kappa}\big(t,x^{\ast}(t),y^{\ast}(t),z^{\ast}(t),\kappa^{\ast}(t)+\theta[\kappa^{\rho}(t)-\kappa^{\ast}(t)],u^{\rho}(t),\mu^{\rho}(t),\nu^{\rho}(t),\lambda^{\rho}(t)\big)d\theta,\\
			E^{\rho}(t):=&\int_0^1 b_{u}\big(t,x^{\ast}(t),y^{\ast}(t),z^{\ast}(t),\kappa^{\ast}(t),u^{\ast}(t)+\rho\theta v(t),\mu^{\rho}(t),\nu^{\rho}(t),\lambda^{\rho}(t)\big)d\theta,\\ 
			F^{\rho}(t):=&\int_0^1 b_{\mu}\big(t,x^{\ast}(t),y^{\ast}(t),z^{\ast}(t),\kappa^{\ast}(t),u^{\ast}(t),\mu^{\ast}(t)+\rho\theta v(t-\delta),\nu^{\rho}(t),\lambda^{\rho}(t)\big)d\theta,\\
			G^{\rho}(t):=&\int_0^1 b_{\nu}\Big(t,x^{\ast}(t),y^{\ast}(t),z^{\ast}(t),\kappa^{\ast}(t),u^{\ast}(t),\mu^{\ast}(t),\nu^{\ast}(t)+\rho\theta\int_{t_0}^t\phi_2(t,s)v(s)ds,\lambda^{\rho}(t)\Big)d\theta,\\
			H^{\rho}(t):=&\int_0^1 b_{\lambda}\Big(t,x^{\ast}(t),y^{\ast}(t),z^{\ast}(t),\kappa^{\ast}(t),u^{\ast}(t),\mu^{\ast}(t),\nu^{\ast}(t),\lambda^{\ast}(t)+\rho\theta\int_{t_0}^t\psi_2(t,s)v(s)dW(s)\Big)d\theta,\\
			R^{\rho}(t):=&\ A^{\rho}(t)\hat{x}(t)+B^{\rho}(t)\hat{y}(t)+C^{\rho}(t)\hat{z}(t)+D^{\rho}(t)\hat{\kappa}(t)+E^{\rho}(t)v(t)+F^{\rho}(t)v(t-\delta)\\
			& +G^{\rho}(t)\int_{t_0}^{t}\phi_2(t,s)v(s)ds+H^{\rho}(t)\int_{t_0}^{t}\psi_2(t,s)v(s)dW(s)-b_x(t)\hat{x}(t)-b_y(t)\hat{y}(t)\\
			& -b_z(t)\hat{z}(t)-b_{\kappa}(t)\hat{\kappa}(t)-b_u(t)v(t)-b_{\mu}(t)v(t-\delta)-b_{\nu}(t)\int_{t_0}^{t}\phi_2(t,s)v(s)ds\\
			& -b_{\lambda}(t)\int_{t_0}^{t}\psi_2(t,s)v(s)dW)(s),\\
			S^{\rho}(t):=&\ I^{\rho}(t)\hat{x}(t)+K^{\rho}(t)\hat{y}(t)+L^{\rho}(t)\hat{z}(t)+M^{\rho}(t)\hat{\kappa}(t)+N^{\rho}(t)v(t)+O^{\rho}(t)v(t-\delta)\\
			&+P^{\rho}(t)\int_{t_0}^{t}\phi_2(t,s)v(s)ds+Q^{\rho}(t)\int_{t_0}^{t}\psi_2(t,s)v(s)dW(s)-\sigma_x(t)\hat{x}(t)-\sigma_y(t)\hat{y}(t)\\
			& -\sigma_z(t)\hat{z}(t)-\sigma_{\kappa}(t)\hat{\kappa}(t)-\sigma_u(t)v(t)-\sigma_{\mu}(t)v(t-\delta)-\sigma_{\nu}(t)\int_{t_0}^{t}\phi_2(t,s)v(s)ds\\
			& -\sigma_{\lambda}(t)\int_{t_0}^{t}\psi_2(t,s)v(s)dW(s),
	\end{aligned}\end{equation*}
	and $I^{\rho}(t)$, $K^{\rho}(t)$, $L^{\rho}(t)$, $M^{\rho}(t)$, $N^{\rho}(t)$, $O^{\rho}(t)$, $P^{\rho}(t)$, $Q^{\rho}(t)$ are denoted similar to $A^{\rho}(t)$, $B^{\rho}(t)$, $C^{\rho}(t)$, $D^{\rho}(t)$, $E^{\rho}(t)$, $F^{\rho}(t)$, $G^{\rho}(t)$, $H^{\rho}(t)$ respectively, and just need to replace $b$ by $\sigma$.
	
	Applying It\^o's formula to $t\mapsto|\check{X}(t)|^2$, one has
	\begin{equation}\begin{aligned}\label{esx2}
			|\check{X}(t)|^2\le&\ \int_{t_0}^t\Big[2\Big<\check{X}(s),A^\rho(s)\check{X}(s)+B^\rho(s)\check{X}(s-\delta)+C^\rho(s)\int_{t_0}^s\phi_1(s,r)\check{X}(r)dr\\
			&+D^\rho(s)\int_{t_0}^s\psi_1(s,r)\check{X}(r)dW(r)+R(s)\Big>+\Big<I^\rho(s)\check{X}(s)+K^\rho(s)\check{X}(s-\delta)\\
			&+L^\rho(s)\int_{t_0}^s\phi_1(s,r)\check{X}(r)dr+M^\rho(s)\int_{t_0}^s\psi_1(s,r)\check{X}(r)dW(r)+S(s),I^\rho(s)\check{X}(s)\\
			&+K^\rho(s)\check{X}(s-\delta)+L^\rho(s)\int_{t_0}^s\phi_1(s,r)\check{X}(r)dr+M^\rho(s)\int_{t_0}^s\psi_1(s,r)\check{X}(r)dW(r)\\
			&+S(s)\Big>\Big]ds+\int_{t_0}^t\Big[2\Big<\check{X}(s),I^\rho(s)\check{X}(s)+K^\rho(s)\check{X}(s-\delta)\\
			&+L^\rho(s)\int_{t_0}^s\phi_1(s,r)\check{X}(r)dr+M^\rho(s)\int_{t_0}^s\psi_1(s,r)\check{X}(r)dW(r)+S(s)\Big>\Big]dW(s).
	\end{aligned}\end{equation}
	By B-D-G's inequality, we gain
	\begin{equation}\hspace{-27mm}\begin{aligned}\label{esphix2}
			&\mathbb{E}\int_{t_0}^T\Big|\int_{t_0}^s\phi_1(s,r)\check{X}(r)dr\Big|^2ds\le\mathbb{E}\int_{t_0}^T\int_{t_0}^T\big|\phi_1(s,r)\check{X}(r)\big|^2drds\\
			\le&\ \mathbb{E}\int_{t_0}^T\sup_{t\in[t_0,T]}\big|\check{X}(t)\big|^2\int_{t_0}^T\big|\phi_1(s,t)\big|^2dsdt\le C(\bar{L},T)\int_{t_0}^T\mathbb{E}\Big[\sup_{t\in[t_0,T]}\big|\check{X}(t)\big|^2\Big]dt.
	\end{aligned}\end{equation}
	\begin{equation}\hspace{-7mm}\begin{aligned}\label{espsix2}
			&\mathbb{E}\int_{t_0}^T\Big|\int_{t_0}^s\psi_1(s,r)\check{X}(r)dW(r)\Big|^2ds\le C(2)\mathbb{E}\int_{t_0}^T\int_{t_0}^T\big|\psi_1(s,r)\check{X}(r)\big|^2drds\\
			\le&\ C(2)\mathbb{E}\int_{t_0}^T\sup_{t\in[t_0,T]}\big|\check{X}(t)\big|^2\int_{t_0}^T\big|\psi_1(s,t)\big|^2dsdt\le C(2,\bar{L},T)\int_{t_0}^T\mathbb{E}\Big[\sup_{t\in[t_0,T]}\big|\check{X}(t)\big|^2\Big]dt.
	\end{aligned}\end{equation}
	\begin{equation}\begin{aligned}\label{esdiffx2}
			&\mathbb{E}\bigg[\sup_{t\in[t_0,T]}\Big|\int_{t_0}^t2\Big<\check{X}(s),I^\rho(s)\check{X}(s)+K^\rho(s)\check{X}(s-\delta)+L^\rho(s)\int_{t_0}^s\phi_1(s,r)\check{X}(r)dr\\
			&\qquad\qquad+M^\rho(s)\int_{t_0}^s\psi_1(s,r)\check{X}(r)dW(r)+S(s)\Big>dW(s)\Big|\bigg]\\
			\le&\ \mathbb{E}\bigg[\int_{t_0}^T\Big<4|\check{X}(s)|^2,\Big| I^\rho(s)\check{X}(s)+K^\rho(s)\check{X}(s-\delta)+L^\rho(s)\int_{t_0}^s\phi_1(s,r)\check{X}(r)dr\\
			&\qquad\qquad+M^\rho(s)\int_{t_0}^s\psi_1(s,r)\check{X}(r)dW(r)+S(s)\Big|^2\Big>ds\bigg]^{\frac{1}{2}}\\
			\le&\ \frac{1}{4}\mathbb{E}\Big[\sup_{t\in[t_0,T]}|\check{X}(t)|^2\Big]+4\mathbb{E}\int_{t_0}^T\Big| I^\rho(s)\check{X}(s)+K^\rho(s)\check{X}(s-\delta)+L^\rho(s)\int_{t_0}^s\phi_1(s,r)\check{X}(r)dr\\
			&\qquad\qquad+M^\rho(s)\int_{t_0}^s\psi_1(s,r)\check{X}(r)dW(r)+S(s)\Big|^2ds\\
			\le&\ \frac{1}{4}\mathbb{E}\Big[\sup_{t\in[t_0,T]}|\check{X}(t)|^2\Big]+C(2,\bar{L},T)\int_{t_0}^T\mathbb{E}\Big[\sup_{t\in[t_0,T]}\big|\check{X}(t)\big|^2\Big]dt+C(2,\bar{L},T)\mathbb{E}\int_{t_0}^T|S(t)|^2dt.
	\end{aligned}\end{equation}
	Inserting (\ref{esphix2})-(\ref{esdiffx2}) into (\ref{esx2}), we deduce
	\begin{equation}\begin{aligned}
			\mathbb{E}\Big[\sup_{t\in[t_0,T]}|\check{X}(t)|^2\Big]\le&\ C(2,\bar{L},T)\int_{t_0}^T\mathbb{E}\Big[\sup_{t\in[t_0,T]}\big|\check{X}(t)\big|^2\Big]dt\\
			&+C(2,\bar{L},T)\mathbb{E}\int_{t_0}^T|S(t)|^2dt+C(2,\bar{L},T)\mathbb{E}\int_{t_0}^T|R(t)|^2dt,	
	\end{aligned}\end{equation}
	where $C(2,\bar{L},T)$ is a finite constant. Further, combine with the fact 
	that $\mathbb{E}\big[|S(t)|^2\big]$, $\mathbb{E}\big[|R(t)|^2\big]\to 0$ as $\rho\to0$ for all $t\in [t_0,T]$, one has by Gronwall's inequality,
	\begin{equation*}
		\lim_{\rho\to 0}\mathbb{E}\Big[\sup_{t\in[t_0,T]}|\check{X}(t)|^2\Big]=0.
	\end{equation*}
	This proof is completed.
\end{proof}

By the optimality of $u^{\ast}(\cdot)$ for the stochastic controlled system with extended mixed delays, it follows that
\begin{equation*}
	\frac{1}{\rho}\big[J(u^\rho(\cdot))-J(u^\ast(\cdot))\big]\le0.
\end{equation*}
From this, we have the following lemma, whose derivation is classical and we omit it.
\begin{mylem}
	Suppose that \textbf{(H3.1)} holds. Let $(x^\ast(\cdot),u^\ast(\cdot))$ be an optimal pair, and $x^\rho(\cdot)$ denotes the state trajectory corresponding to the control $u^\rho(\cdot)$ by (\ref{u rho}), then the following variational inequality holds:
	\begin{equation}\begin{aligned}\label{variational inequality}
			\mathbb{E}\bigg\{&\int_{t_0}^T\Big[l_x(t)\hat{x}(t)+l_y(t)\hat{y}(t)+l_z(t)\hat{z}(t)+l_{\kappa}(t)\hat{\kappa}(t)+l_u(t)v(t)+l_{\mu}(t)v(t-\delta)\\
			&\qquad+l_{\nu}(t)\int_{t_0}^t\phi_2(t,s)v(s)ds+l_{\lambda}(t)\int_{t_0}^t\psi_2(t,s)v(s)dW(s) \Big]dt\\
			&\qquad+h_x(T)\hat{x}(T)+h_y(T)\hat{y}(T)+h_z(T)\hat{z}(T)+h_{\kappa}(T)\hat{\kappa}(T) \bigg\}\le0.
	\end{aligned}\end{equation}
\end{mylem}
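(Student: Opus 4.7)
The plan is to proceed by the classical variational argument applied to the cost functional (\ref{cost1}). By optimality of $u^\ast(\cdot)$, we have $\rho^{-1}\bigl[J(u^\rho(\cdot))-J(u^\ast(\cdot))\bigr]\le 0$ for every $\rho\in(0,1]$, and the goal is to show that the left-hand side converges to the expression in (\ref{variational inequality}) as $\rho\to 0^+$. First, I would write
\begin{equation*}
\frac{J(u^\rho(\cdot))-J(u^\ast(\cdot))}{\rho}=\mathbb{E}\Big\{\int_{t_0}^T\frac{l^\rho(t)-l(t)}{\rho}dt+\frac{h(x^\rho(T),y^\rho(T),z^\rho(T),\kappa^\rho(T))-h(x^\ast(T),y^\ast(T),z^\ast(T),\kappa^\ast(T))}{\rho}\Big\},
\end{equation*}
and apply a first-order Taylor expansion with integral remainder to both the running-cost difference $l^\rho(t)-l(t)$ and the terminal difference, entirely analogous to the coefficient expansions $A^\rho,B^\rho,\ldots,H^\rho$ and $I^\rho,\ldots,Q^\rho$ used in the proof of Lemma \ref{lem3.1}.

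The passage to the limit then reduces to verifying, term-by-term, that the normalized increments of the process variables converge in $L^2$ to the corresponding components of the variational equation (\ref{variational equation}). For the state, Lemma \ref{lem3.1} already yields $(x^\rho-x^\ast)/\rho\to\hat{x}$ in $L^2_\mathbb{F}\bigl(\Omega;C([t_0,T];\mathbb{R}^n)\bigr)$, which immediately gives the convergence $(y^\rho-y^\ast)/\rho\to\hat{y}$ via the time shift. For $(z^\rho-z^\ast)/\rho\to\hat{z}$ and $(\kappa^\rho-\kappa^\ast)/\rho\to\hat{\kappa}$ I would invoke the stochastic Fubini theorem together with It\^o's isometry, exploiting the essential boundedness of $\phi_1$ and $\psi_1$ from (\textbf{H3.1}). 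The control-type increments $(\mu^\rho-\mu^\ast)/\rho$, $(\nu^\rho-\nu^\ast)/\rho$, $(\lambda^\rho-\lambda^\ast)/\rho$ are exact and identically equal to $v(t-\delta)$, $\int_{t_0}^t\phi_2(t,s)v(s)ds$, $\int_{t_0}^t\psi_2(t,s)v(s)dW(s)$ respectively, so no limiting argument is needed for them.

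Once these convergences are established, I would interchange expectation-integration with the limit in $\rho$ via dominated convergence, justified by the uniform boundedness of the partial derivatives $l_\varrho,h_\varrho$ in (\textbf{H3.1}) combined with the $L^2$-estimates on the difference quotients. The Taylor remainder terms vanish because the averaged derivatives $\int_0^1 l_\varrho\bigl(\cdots+\theta(\cdots)\bigr)d\theta$ converge to $l_\varrho(t)$ pointwise almost everywhere while being bounded by the same constant $C$.

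The main obstacle is controlling the two stochastic-integral contributions, namely the terms carrying $l_\kappa(t)\hat{\kappa}(t)$ and $l_\lambda(t)\int_{t_0}^t\psi_2(t,s)v(s)dW(s)$: to pass to the limit inside the expectation, one needs uniform-in-$\rho$ $L^2(\Omega\times[t_0,T])$ bounds on the corresponding difference quotients, which in turn rely on the Burkholder-Davis-Gundy inequality and the essential boundedness of the moving-average kernels $\psi_1,\psi_2$, exactly as was done in the estimates (\ref{espsix2}) and (\ref{esdiffx2}). After handling these two delicate terms, collecting all limits yields precisely the variational inequality (\ref{variational inequality}).
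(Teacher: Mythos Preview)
Your proposal is correct and follows exactly the classical variational argument the paper has in mind; indeed, the paper simply states ``whose derivation is classical and we omit it'' immediately before this lemma. You have supplied the details (Taylor expansion, Lemma~\ref{lem3.1} for the state increments, exactness of the control increments, BDG and dominated convergence for the stochastic-integral terms) that the authors chose to suppress.
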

Next, we transform the variational equation SDDE (\ref{variational equation}) and variational inequality (\ref{variational inequality}) into SVIE. By Fubini's theorem,
\begin{equation}\begin{aligned}\label{z(t)tansform}
		\hat{z}(t)&=\int_{t_0}^t\phi_1(t,s)\hat{x}(s)ds\\
		&=\int_{t_0}^t\phi_1(t,s)\bigg\{\int_{t_0}^s\Big[b_x(r)\hat{x}(r)+b_y(r)\hat{y}(r)+b_z(r)\hat{z}(r)+b_{\kappa}(r)\hat{\kappa}(r)+b_u(r)v(r)+b_{\mu}(r)v(r-\delta)\\
		&\qquad+b_{\nu}(r)\int_{t_0}^r\phi_2(r,\tau)v(\tau)d\tau+b_{\lambda}(r)\int_{t_0}^r\psi_2(r,\tau)v(\tau)dW(\tau)\Big]dr\\
		&\quad+\int_{t_0}^s\Big[\sigma_x(r)\hat{x}(r)+\sigma_y(r)\hat{y}(r)+\sigma_z(r)\hat{z}(r)+\sigma_{\kappa}(r)\hat{\kappa}(r)+\sigma_u(r)v(r)+\sigma_{\mu}(r)v(r-\delta)\\
		&\qquad+\sigma_{\nu}(r)\int_{t_0}^r\phi_2(r,\tau)v(\tau)d\tau+\sigma_{\lambda}(r)\int_{t_0}^r\psi_2(r,\tau)v(\tau)dW(\tau)\Big]dW(r)\bigg\}ds\\
		&=\int_{t_0}^t\mathcal{E}_1(t,s)\Big[b_x(s)\hat{x}(s)+b_y(s)\hat{y}(s)+b_z(s)\hat{z}(s)+b_{\kappa}(s)\hat{\kappa}(s)+b_u(s)v(s)+b_{\mu}(s)v(s-\delta)\\
		&\qquad\qquad\qquad+b_{\nu}(s)\int_{t_0}^s\phi_2(s,\tau)v(\tau)d\tau+b_{\lambda}(s)\int_{t_0}^s\psi_2(s,\tau)v(\tau)dW(\tau)\Big]ds\\
		&\quad+\int_{t_0}^t\mathcal{E}_1(t,s)\Big[\sigma_x(s)\hat{x}(s)+\sigma_y(s)\hat{y}(s)+\sigma_z(s)\hat{z}(s)+\sigma_{\kappa}(s)\hat{\kappa}(s)+\sigma_u(s)v(s)+\sigma_{\mu}(s)v(s-\delta)\\
		&\qquad\qquad\qquad\quad+\sigma_{\nu}(s)\int_{t_0}^s\phi_2(s,\tau)v(\tau)d\tau+\sigma_{\lambda}(s)\int_{t_0}^s\psi_2(s,\tau)v(\tau)dW(\tau)\Big]dW(s),
\end{aligned}\end{equation}
where $\mathcal{E}_1(t,s):=\int_s^t\phi_1(t,r)dr$.

Similarly, by stochastic Fubini's theorem, we deduce
\begin{equation*}\begin{aligned}
		\hat{\kappa}(t)&=\int_{t_0}^t\psi_1(t,s)\hat{x}(s)dW(s)\\
		&=\int_{t_0}^t\mathcal{E}_2(t,s)\Big[b_x(s)\hat{x}(s)+b_y(s)\hat{y}(s)+b_z(s)\hat{z}(s)+b_{\kappa}(s)\hat{\kappa}(s)+b_u(s)v(s)\\
		&\qquad\qquad+b_{\mu}(s)v(s-\delta)+b_{\nu}(s)\int_{t_0}^s\phi_2(s,\tau)v(\tau)d\tau+b_{\lambda}(s)\int_{t_0}^s\psi_2(s,\tau)v(\tau)dW(\tau)\Big]ds\\
\end{aligned}\end{equation*}
\begin{equation}\begin{aligned}\label{kappa(t)transform}
		&\quad+\int_{t_0}^t\mathcal{E}_2(t,s)\Big[\sigma_x(s)\hat{x}(s)+\sigma_y(s)\hat{y}(s)+\sigma_z(s)\hat{z}(s)+\sigma_{\kappa}(s)\hat{\kappa}(s)+\sigma_u(s)v(s)\\
		&\qquad\quad+\sigma_{\mu}(s)v(s-\delta)+\sigma_{\nu}(s)\int_{t_0}^s\phi_2(s,\tau)v(\tau)d\tau+\sigma_{\lambda}(s)\int_{t_0}^s\psi_2(s,\tau)v(\tau)dW(\tau)\Big]dW(s),
\end{aligned}\end{equation}
where $\mathcal{E}_2(t,s):=\int_s^t\psi_1(t,r)dW(r)$.

Define
\begin{equation}\label{X(t)}
	X(t)^\top:=\begin{bmatrix}
		\hat{x}(t)^\top &
		\hat{y}(t)^\top &
		\hat{z}(t)^\top &
		\hat{\kappa}(t)^\top
	\end{bmatrix},
\end{equation}
and
\small
\begin{equation*}\begin{aligned}
		\mathbb{A}(t,s):&=\begin{bmatrix}
			b_x(s) & b_y(s) & b_z(s) & b_{\kappa}(s)\\
			\mathbb{I}_{(\delta,\infty)}(t-s)b_x(s) & \mathbb{I}_{(\delta,\infty)}(t-s)b_y(s) & \mathbb{I}_{(\delta,\infty)}(t-s)b_z(s) & \mathbb{I}_{(\delta,\infty)}(t-s)b_{\kappa}(s)\\
			\mathcal{E}_1(t,s)b_x(s) & \mathcal{E}_1(t,s)b_y(s) & \mathcal{E}_1(t,s)b_z(s) & \mathcal{E}_1(t,s)b_{\kappa}(s)\\
			\mathcal{E}_2(t,s)b_x(s) & \mathcal{E}_2(t,s)b_y(s) & \mathcal{E}_2(t,s)b_z(s) & \mathcal{E}_2(t,s)b_{\kappa}(s)
		\end{bmatrix},\\
		\mathbb{C}(t,s):&=\begin{bmatrix}
			\sigma_x(s) & \sigma_y(s) & \sigma_z(s) & \sigma_{\kappa}(s)\\
			\mathbb{I}_{(\delta,\infty)}(t-s)\sigma_x(s) & \mathbb{I}_{(\delta,\infty)}(t-s)\sigma_y(s) & \mathbb{I}_{(\delta,\infty)}(t-s)\sigma_z(s) & \mathbb{I}_{(\delta,\infty)}(t-s)\sigma_{\kappa}(s)\\
			\mathcal{E}_1(t,s)\sigma_x(s) & \mathcal{E}_1(t,s)\sigma_y(s) & \mathcal{E}_1(t,s)\sigma_z(s) & \mathcal{E}_1(t,s)\sigma_{\kappa}(s)\\
			\mathcal{E}_2(t,s)\sigma_x(s) & \mathcal{E}_2(t,s)\sigma_y(s) & \mathcal{E}_2(t,s)\sigma_z(s) & \mathcal{E}_2(t,s)\sigma_{\kappa}(s)
		\end{bmatrix},\\
		\mathbb{A}_1(t,s):&=\begin{bmatrix}
			b_x(s) & b_y(s) & b_z(s) & b_{\kappa}(s)\\
			\mathbb{I}_{(\delta,\infty)}(t-s)b_x(s) & \mathbb{I}_{(\delta,\infty)}(t-s)b_y(s) & \mathbb{I}_{(\delta,\infty)}(t-s)b_z(s) & \mathbb{I}_{(\delta,\infty)}(t-s)b_{\kappa}(s)\\
			\mathcal{E}_1(t,s)b_x(s) & \mathcal{E}_1(t,s)b_y(s) & \mathcal{E}_1(t,s)b_z(s) & \mathcal{E}_1(t,s)b_{\kappa}(s)\\
			0 & 0 & 0 & 0
		\end{bmatrix},\\
		\mathbb{C}_1(t,s):&=\begin{bmatrix}
			\sigma_x(s) & \sigma_y(s) & \sigma_z(s) & \sigma_{\kappa}(s)\\
			\mathbb{I}_{(\delta,\infty)}(t-s)\sigma_x(s) & \mathbb{I}_{(\delta,\infty)}(t-s)\sigma_y(s) & \mathbb{I}_{(\delta,\infty)}(t-s)\sigma_z(s) & \mathbb{I}_{(\delta,\infty)}(t-s)\sigma_{\kappa}(s)\\
			\mathcal{E}_1(t,s)\sigma_x(s) & \mathcal{E}_1(t,s)\sigma_y(s) & \mathcal{E}_1(t,s)\sigma_z(s) & \mathcal{E}_1(t,s)\sigma_{\kappa}(s)\\
			0 & 0 & 0 & 0
		\end{bmatrix},\\
		\mathbb{A}_2(t,s):&=\begin{bmatrix}
			0 & 0 & 0 & 0\\
			0 & 0 & 0 & 0\\
			0 & 0 & 0 & 0\\
			\mathcal{E}_2(t,s)b_x(s) & \mathcal{E}_2(t,s)b_y(s) & \mathcal{E}_2(t,s)b_z(s) & \mathcal{E}_2(t,s)b_{\kappa}(s)
		\end{bmatrix},\\
		\mathbb{C}_2(t,s):&=\begin{bmatrix}
			0 & 0 & 0 & 0\\
			0 & 0 & 0 & 0\\
			0 & 0 & 0 & 0\\
			\mathcal{E}_2(t,s)\sigma_x(s) & \mathcal{E}_2(t,s)\sigma_y(s) & \mathcal{E}_2(t,s)\sigma_z(s) & \mathcal{E}_2(t,s)\sigma_{\kappa}(s)
		\end{bmatrix},\\
		\hat{\mathbb{A}}_2(t):&=\begin{bmatrix}
			0 & 0 & 0 & 0\\
			0 & 0 & 0 & 0\\ 
			0 & 0 & 0 & 0\\
			b_x(t) & b_y(t) & b_z(t) & b_{\kappa}(t)
		\end{bmatrix}, \quad 
		\hat{\mathbb{C}}_2(t):=\begin{bmatrix}
			0 & 0 & 0 & 0\\
			0 & 0 & 0 & 0\\ 
			0 & 0 & 0 & 0\\
			\sigma_x(t) & \sigma_y(t) & \sigma_z(t) & \sigma_{\kappa}(t)
		\end{bmatrix},\\
\end{aligned}\end{equation*}	
\begin{equation*}\begin{aligned}\label{coefficient1}
		\mathbb{B}(t,s):&=\begin{bmatrix}
			\Xi_b(s) \\
			\mathbb{I}_{(\delta,\infty)}(t-s)\Xi_b(s) \\
			\mathcal{E}_1(t,s)\Xi_b(s) \\
			\mathcal{E}_2(t,s)\Xi_b(s) 
		\end{bmatrix},\
		\mathbb{D}(t,s):=\begin{bmatrix}
			\Xi_{\sigma}(s) \\
			\mathbb{I}_{(\delta,\infty)}(t-s)\Xi_{\sigma}(s) \\
			\mathcal{E}_1(t,s)\Xi_{\sigma}(s) \\
			\mathcal{E}_2(t,s)\Xi_{\sigma}(s) 
		\end{bmatrix},\
		\mathbb{B}_2(t,s):=\begin{bmatrix}
			0\\ 0\\ 0\\ \mathcal{E}_2(t,s)\Xi_b(s) 
		\end{bmatrix},\\
		\mathbb{B}_1(t,s):&=\begin{bmatrix}
			\Xi_b(s) \\
			\mathbb{I}_{(\delta,\infty)}(t-s)\Xi_b(s) \\
			\mathcal{E}_1(t,s)\Xi_b(s) \\
			0
		\end{bmatrix},\
		\mathbb{D}_1(t,s):=\begin{bmatrix}
			\Xi_{\sigma}(s) \\
			\mathbb{I}_{(\delta,\infty)}(t-s)\Xi_{\sigma}(s) \\
			\mathcal{E}_1(t,s)\Xi_{\sigma}(s) \\
			0 
		\end{bmatrix},
		\mathbb{D}_2(t,s):=\begin{bmatrix}
			0\\ 0\\ 0\\
			\mathcal{E}_2(t,s)\Xi_{\sigma}(s) 
		\end{bmatrix},
\end{aligned}\end{equation*}
\normalsize
with
\begin{equation*}
	\Xi_f(t):=f_u(t)v(t)+f_{\mu}(t)v(t-\delta)+f_{\nu}(t)\int_{t_0}^t\phi_2(t,s)v(s)ds+f_{\lambda}(t)\int_{t_0}^t\psi_2(t,s)v(s)dW(s),\ f=b,\sigma,l.
\end{equation*}

Then, base on (\ref{variational equation}), (\ref{z(t)tansform}) and (\ref{kappa(t)transform}), $X(\cdot)$ satisfies the following SVIE:
\begin{equation}\begin{aligned}\label{FSVIE}
		X(t)&=\int_{t_0}^t\big[\mathbb{A}(t,s)X(s)+\mathbb{B}(t,s)\big]ds+\int_{t_0}^t\big[\mathbb{C}(t,s)X(s)+\mathbb{D}(t,s)\big]dW(s)\\
		:&=\varphi(t)+\int_{t_0}^t\mathbb{A}(t,s)X(s)ds+\int_{t_0}^t\mathbb{C}(t,s)X(s)dW(s):=\hat{X}(t)+\tilde{X}(t),
\end{aligned}\end{equation}
where
\begin{equation*}\begin{aligned}
		\varphi(t):&=\int_{t_0}^t\mathbb{B}(t,s)ds+\int_{t_0}^t\mathbb{D}(t,s)dW(s):=\hat{\varphi}(t)+\tilde{\varphi}(t),\\
		\hat{X}(t):
		%&=\int_{t_0}^t\Big[\mathbb{A}_1(t,s)X(s)+\mathbb{B}_1(t,s)\Big]ds+\int_{t_0}^t\Big[\mathbb{C}_1(t,s)X(s)+\mathbb{D}_1(t,s)\Big]dW(s)\\
		&=\hat{\varphi}(t)+\int_{t_0}^t\mathbb{A}_1(t,s)X(s)ds+\int_{t_0}^t\mathbb{C}_1(t,s)X(s)dW(s),\\
		\tilde{X}(t):
		%&=\int_{t_0}^t\Big[\mathbb{A}_2(t,s)X(s)+\mathbb{B}_2(t,s)\Big]ds+\int_{t_0}^t\Big[\mathbb{C}_2(t,s)X(s)+\mathbb{D}_2(t,s)\Big]dW(s)\\
		&=\tilde{\varphi}(t)+\int_{t_0}^t\mathbb{A}_2(t,s)X(s)ds+\int_{t_0}^t\mathbb{C}_2(t,s)X(s)dW(s),\\
\end{aligned}\end{equation*}
and 
\begin{equation*}
	\hat{\varphi}(t):=\int_{t_0}^t\mathbb{B}_1(t,s)ds+\int_{t_0}^t\mathbb{D}_1(t,s)dW(s),\quad
	\tilde{\varphi}(t):=\int_{t_0}^t\mathbb{B}_2(t,s)ds+\int_{t_0}^t\mathbb{D}_2(t,s)dW(s).
\end{equation*}

By Proposition 2.1 in \cite{WY23} and the assumption \textbf{(H3.1)}, (\ref{FSVIE}) admits unique solution. Therefore, the above variatioanl inequality (\ref{variational inequality}) can be converted into
\begin{equation}\label{variational inequality-1}
	\frac{1}{\rho}\big[J\big(u^{\rho}(\cdot)\big)-J\big(u^{\ast}(\cdot)\big)\big]\le\mathbb{E}\bigg\{\int_{t_0}^T\big[\mathbb{L}(t)X(t)+\Xi_l(t)\big]dt+\mathbb{H}X(T)\bigg\}\le0,
\end{equation}
where 
\begin{equation}\begin{aligned}\label{cost coefficient3}
		\mathbb{L}(t):&=\begin{bmatrix}
			l_x(t) & l_y(t) & l_z(t) & l_{\kappa}(t) 
		\end{bmatrix},\quad
		\mathbb{H}:=\begin{bmatrix}
			h_x(T) & h_y(T) & h_z(T) & h_{\kappa}(T) 
		\end{bmatrix},\\
		\mathbb{H}_1:&=\begin{bmatrix}
			h_x(T) & h_y(T) & h_z(T) & 0
		\end{bmatrix},\quad
		\mathbb{H}_2:=\begin{bmatrix}
			0 & 0 & 0 & h_{\kappa}(T) 
		\end{bmatrix}.	
\end{aligned}\end{equation}

\section{Stochastic optimal control problem with extended mixed delays}

We treat the term about $X(\cdot)$ in (\ref{FSVIE}). Inspired by \cite{YJM08}, we introduce the adjoint equation as follows:
\begin{equation}\left\{\begin{aligned}\label{adjoint equation}
		(a)\ \eta(t)&=\mathbb{H}^\top-\int_t^T\zeta(s)dW(s),\quad t\in[t_0,T],\\
		(b)\ Y(t)&=\mathbb{L}(t)^\top+\mathbb{A}_1(T,t)^\top\mathbb{H} ^\top+\mathbb{C}_1(T,t)^\top \zeta(t)\\
		&\quad+\int_t^T\Big\{\hat{\mathbb{A}}_2(t)^\top\psi_1(T,s)^\top\mathbb{E}^{\mathcal{F}_s}\big[D_s\mathbb{H}^\top\big] +\hat{\mathbb{C}}_2(t)^\top \mathbb{E}^{\mathcal{F}_t}\big[D_t\big(\psi_1(T,s)^\top\zeta(s)\big)\big]\Big\}ds\\
		&\quad+\int_t^T\bigg\{\mathbb{A}_1(s,t)^\top Y(s)+\mathbb{C}_1(s,t)^\top Z(s,t)+\int_t^s\Big\{\hat{\mathbb{A}}_2(t)^\top\psi_1(s,r)^\top\\
		&\quad\qquad \times \mathbb{E}^{\mathcal{F}_r}\big[D_rY(s)\big]+\hat{\mathbb{C}}_2(t)^\top \mathbb{E}^{\mathcal{F}_t}\big[D_t\big(\psi_1(s,r)^\top Z(s,r)\big)\big]\Big\}dr\bigg\}ds\\
		&\quad-\int_t^TZ(t,s)dW(s),\quad t\in[t_0,T],\\
		(c)\ Y(t)&=\mathbb{E}^{\mathcal{F}_{t_0}}\big[Y(t)\big]+\int_{t_0}^tZ(t,s)dW(s),\quad t\in[t_0,T].\\
	\end{aligned}\right.\end{equation}

\begin{Remark}
	By Theorem 4.1 in \cite{PP90}, BSDE (4.1a) admits a unique adapted solution $(\eta(\cdot),\zeta(\cdot))$. On the other hand, equation (4.1b) is a linear BSVIE with Malliavin derivatives, especially the term $D_t\big(\psi_1(s,r)^\top Z(s,r)\big)$, where the presence of this successfully overcome challenges induced by the extended noisy memory, but gives rise to substantial difficulties in establishing the existence and uniqueness of an adapted M-solution satisfying (4.1c) under the assumption \textbf{(H3.1)}. In this paper, we therefore assume that (4.1b) admits a unique adapted M-solution satisfying (4.1c). However, in the special case where \textcolor{blue}{$\psi_1(\cdot,\cdot)\equiv 0$}, equation (4.1b) indeed admits a unique adapted M-solution satisfying (4.1c) under the assumption \textbf{(H3.1)} by the Proposition~\ref{BSVIE exist}.
	
\end{Remark}

To facilitate the proof, we denote
\begin{equation}\begin{aligned}
		\psi(t):=&\ \mathbb{L}(t)^\top+\mathbb{A}_1(T,t)^\top\mathbb{H} ^\top+\mathbb{C}_1(T,t)^\top \zeta(t),\\	
		&+\int_t^T\Big\{\hat{\mathbb{A}}_2(t)^\top\psi_1(T,s)^\top\mathbb{E}^{\mathcal{F}_s}\big[D_s\mathbb{H}^\top\big] +\hat{\mathbb{C}}_2(t)^\top \mathbb{E}^{\mathcal{F}_t}\big[D_t\big(\psi_1(T,s)^\top\zeta(s)\big)\big]\Big\}ds.\\
\end{aligned}\end{equation}
Combine with the definition of (\ref{X(t)}), and let
\begin{equation}\label{adjointeqvector}
	Y(t):=\begin{bmatrix}
		Y^1(t)\\
		Y^2(t)\\
		Y^3(t)\\
		Y^4(t)
	\end{bmatrix}, \quad 
	Z(t,s):=\begin{bmatrix}
		Z^1(t,s)\\
		Z^2(t,s)\\
		Z^3(t,s)\\
		Z^4(t,s)
	\end{bmatrix},\quad 
	\eta(t):=\begin{bmatrix}
		\eta^1(t)\\
		\eta^2(t)\\
		\eta^3(t)\\
		\eta^4(t)
	\end{bmatrix}, \quad	
	\zeta(t):=\begin{bmatrix}
		\zeta^1(t)\\
		\zeta^2(t)\\
		\zeta^3(t)\\
		\zeta^4(t)
	\end{bmatrix}.
\end{equation}

\subsection{Stochastic maximum principle}

The appearance of $\mathcal{E}_2(\cdot,\cdot)$ causes the coefficients of the SVIE (\ref{FSVIE}) do not satisfy the conditions required in Theorem 5.1 of \cite{YJM08}.  To overcome this difficulty, we employ a technique referred to as \emph{coefficient decomposition} in this paper, by which the adjoint equation (\ref{adjoint equation}) is introduced and stochastic maximum principle is derived.

\begin{mylem}
	Let \textbf{(H3.1)} be satisfied. Suppose that $(x^\ast(\cdot),u^\ast(\cdot))$ is an optimal pair to \textbf{Problem (P)}, $(\eta(\cdot),\zeta(\cdot),Y(\cdot),Z(\cdot,\cdot))$ is a solution to (\ref{adjoint equation}). Then, the variational inequality (\ref{variational inequality-1}) can be derived as follows
	\begin{equation}\begin{aligned}\label{variational inequality-2}
			&\rho^{-1}\big[J(u^{\rho}(\cdot))-J(u^{\ast}(\cdot)) \big]\\
			=&\ \mathbb{E}\int_{t_0}^T\bigg\{\Xi_l(s)+\Big<\Xi_b(s),\int_s^T Y^1(t)dt+\int_{s+\delta}^T Y^2(t)dt\mathbb{I}_{[t_0,T-\delta)}(s)+\int_s^T \mathcal{E}_1(t,s)^\top Y^3(t)dt\\
			&\quad+\int_s^T\int_s^t\psi_1(t,r)^\top\mathbb{E}^{\mathcal{F}_r}\big[D_rY^4(t)\big]drdt+h_x(T)^\top+h_y(T)^\top\mathbb{I}_{[t_0,T-\delta)}(s)+\mathcal{E}_1(T,s)^\top h_z(T)^\top\\
			&\quad+\int_s^T\psi_1(T,r)^\top\mathbb{E}^{\mathcal{F}_r}\big[D_rh_{\kappa}(T)^\top\big]dr \Big>+\Big<\Xi_{\sigma}(s),\int_s^T Z^1(t,s)dt+\int_{s+\delta}^T Z^2(t,s)dt\mathbb{I}_{[t_0,T-\delta)}(s)\\
			&\quad+\int_s^T \mathcal{E}_1(t,s)^\top Z^3(t,s)dt+\int_s^T\int_s^t\mathbb{E}^{\mathcal{F}_s}\big[D_s\big(\psi_1(t,r)^\top Z^4(t,r)\big)\big]drdt+\zeta^1(s)\\
			&\quad+\zeta^2(s)\mathbb{I}_{[t_0,T-\delta)}(s)+\mathcal{E}_1(T,s)^\top\zeta^3(s)+\int_s^T\mathbb{E}^{\mathcal{F}_s}\big[D_s\big(\psi_1(T,r)^\top\zeta^4(r)\big) \big]dr\bigg\}ds\le0.
	\end{aligned}\end{equation}
\end{mylem}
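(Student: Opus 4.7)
The plan is to derive (\ref{variational inequality-2}) by testing the forward SVIE (\ref{FSVIE}) for $X(\cdot)$ against the adjoint processes $(\eta(\cdot),\zeta(\cdot),Y(\cdot),Z(\cdot,\cdot))$ from (\ref{adjoint equation}) and exploiting the coefficient decomposition $\mathbb{A}=\mathbb{A}_1+\mathbb{A}_2$, $\mathbb{C}=\mathbb{C}_1+\mathbb{C}_2$. First I would split the middle expression of (\ref{variational inequality-1}) into the terminal piece $\mathbb{E}[\mathbb{H}X(T)]$, the running piece $\mathbb{E}\int_{t_0}^T\mathbb{L}(t)X(t)\,dt$, and the already-present free term $\mathbb{E}\int_{t_0}^T\Xi_l(t)\,dt$, which passes through unchanged into the right-hand side.

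For the terminal piece I replace $X(T)$ by $\hat X(T)+\tilde X(T)$ and eliminate $\mathbb{H}$ using the BSDE in (\ref{adjoint equation})(a). By Proposition \ref{pro2.5} and the Clark-Ocone representation, $\eta(t)=\mathbb{E}^{\mathcal{F}_t}[\mathbb{H}^\top]$ with $\zeta(t)=\mathbb{E}^{\mathcal{F}_t}[D_t\mathbb{H}^\top]$. Splitting $\mathbb{H}=\mathbb{H}_1+\mathbb{H}_2$ as in (\ref{cost coefficient3}), the $\mathbb{H}_1$ contribution is absorbed via It\^o's isometry and the classical BSVIE duality, yielding the boundary terms $h_x(T)^\top$, $h_y(T)^\top\mathbb{I}_{[t_0,T-\delta)}(s)$, $\mathcal{E}_1(T,s)^\top h_z(T)^\top$, together with $\zeta^1,\zeta^2,\zeta^3$. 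The $\mathbb{H}_2$ contribution carries the stochastic kernel $\mathcal{E}_2(T,\cdot)$; expanding $\mathcal{E}_2(T,s)=\int_s^T\psi_1(T,r)\,dW(r)$ and applying the generalized duality formula (\ref{duality formula}) produces precisely the Malliavin-type terms $\int_s^T\psi_1(T,r)^\top\mathbb{E}^{\mathcal{F}_r}[D_r h_\kappa(T)^\top]\,dr$ and $\int_s^T\mathbb{E}^{\mathcal{F}_s}[D_s(\psi_1(T,r)^\top\zeta^4(r))]\,dr$ appearing in (\ref{variational inequality-2}).

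For the running piece $\mathbb{E}\int_{t_0}^T\mathbb{L}(t)X(t)\,dt$, I substitute $X(t)=\hat X(t)+\tilde X(t)$ and pair against $Y(\cdot),Z(\cdot,\cdot)$ from (\ref{adjoint equation})(b). On the deterministic-kernel block $(\mathbb{A}_1,\mathbb{C}_1)$ the hypotheses of Theorem~5.1 of Yong (2008) are met, so I invoke the standard BSVIE duality between (\ref{adjoint equation})(b) and the SVIE piece $\hat X(\cdot)$, using the M-solution relation (\ref{adjoint equation})(c) and It\^o's isometry to cancel cross stochastic integrals. A stochastic-Fubini exchange of the $(t,s)$ order then groups the output under a single $ds$-integral and produces the contributions $\int_s^T Y^1(t)\,dt$, $\int_{s+\delta}^T Y^2(t)\,dt\,\mathbb{I}_{[t_0,T-\delta)}(s)$, $\int_s^T\mathcal{E}_1(t,s)^\top Y^3(t)\,dt$, $\zeta^1(s)$, $\zeta^2(s)\mathbb{I}_{[t_0,T-\delta)}(s)$, $\mathcal{E}_1(T,s)^\top\zeta^3(s)$, paired with $\Xi_b(s)$ and $\Xi_\sigma(s)$ respectively, together with the corresponding $Z^1,Z^2,Z^3$ cross terms.

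The main obstacle, and the raison d'\^etre of the coefficient decomposition, is the stochastic-kernel block $(\mathbb{A}_2,\mathbb{C}_2)$, because $\mathcal{E}_2(t,s)=\int_s^t\psi_1(t,r)\,dW(r)$ is only $\mathcal{F}_t$-measurable, so the Yong (2008) duality fails directly. Here I would expand $\mathcal{E}_2(t,s)$ explicitly inside the pairings with $Y^4(t)$ and $Z^4(t,r)$ and apply the generalized duality formula (\ref{duality formula}) to each resulting product of the form $\mathbb{E}[F\int_{t_0}^T\tilde\varphi(r)\,dW(r)]$, which transfers the It\^o integral onto a Malliavin derivative and yields $\int_s^t\psi_1(t,r)^\top\mathbb{E}^{\mathcal{F}_r}[D_r Y^4(t)]\,dr$ and $\int_s^t\mathbb{E}^{\mathcal{F}_s}[D_s(\psi_1(t,r)^\top Z^4(t,r))]\,dr$, i.e.\ exactly the Malliavin terms inserted into the fourth component of (\ref{adjoint equation}) and of (\ref{variational inequality-2}). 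The delicate point in this step is the repeated interchange of $dt,ds,dr$ in the presence of $\omega$-dependent kernels and Malliavin derivatives, which I would justify by stochastic Fubini together with the integrability bounds in \textbf{(H3.1)}, the identity (\ref{Clarke-Ocone theorem}), and the assumed well-posedness of the adapted M-solution to (\ref{adjoint equation})(b). Collecting all coefficients of $\Xi_b(s)$, $\Xi_\sigma(s)$ and of the free term $\Xi_l(s)$ then yields the equality in (\ref{variational inequality-2}), while the $\le 0$ sign is inherited directly from (\ref{variational inequality-1}).
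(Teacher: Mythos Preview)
Your proposal is correct and follows essentially the same approach as the paper: you correctly identify the coefficient decomposition $\mathbb{A}=\mathbb{A}_1+\mathbb{A}_2$, $\mathbb{C}=\mathbb{C}_1+\mathbb{C}_2$ as the key device, apply the standard BSVIE duality of Yong~(2008) on the $(\mathbb{A}_1,\mathbb{C}_1)$ block and the generalized Malliavin duality formula~(\ref{duality formula}) on the stochastic-kernel $(\mathbb{A}_2,\mathbb{C}_2)$ block, and use stochastic Fubini to reorganize the integrals. The paper organizes the computation slightly differently by first establishing the intermediate duality identity $\mathbb{E}[\mathbb{H}X(T)]+\mathbb{E}\int_{t_0}^T\mathbb{L}(t)X(t)\,dt = \mathbb{E}[\langle\mathbb{H}^\top,\varphi(T)\rangle]+\mathbb{E}\int_{t_0}^T\langle\varphi(t),Y(t)\rangle\,dt$ and then applying the coefficient decomposition a second time to $\varphi$ via $\mathbb{B}=\mathbb{B}_1+\mathbb{B}_2$, $\mathbb{D}=\mathbb{D}_1+\mathbb{D}_2$, but the underlying steps and the handling of the Malliavin terms are the same as what you outline.
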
 
\begin{proof}
	Inspired by \cite{YJM08}, recalled back SVIE (\ref{FSVIE}) and (\ref{adjoint equation}), by Fubini's theorem, we deduce
	\begin{equation}\begin{aligned}\label{varphiY}
			&\mathbb{E}\int_{t_0}^T\big<\varphi(t),Y(t)\big>dt\\
			=&\ \mathbb{E}\int_{t_0}^T\big<X(t),Y(t)\big>dt-\mathbb{E}\int_{t_0}^T\Big<\int_{t_0}^t\mathbb{A}_1(t,s)X(s)ds+\int_{t_0}^t\mathbb{C}_1(t,s)X(s)dW(s),Y(t)\Big>dt\\
			&\ -\mathbb{E}\int_{t_0}^T\Big<\int_{t_0}^t\mathbb{A}_2(t,s)X(s)ds+\int_{t_0}^t\mathbb{C}_2(t,s)X(s)dW(s),Y(t)\Big>dt\\
			=&\ \mathbb{E}\int_{t_0}^T\big<X(t),Y(t)\big>dt-\mathbb{E}\int_{t_0}^T\Big<\int_{t}^T\big[\mathbb{A}_1(s,t)^\top Y(s)+\mathbb{C}_1(s,t)^\top Z(s,t)\big]ds,X(t)\Big>dt\\
			&\ -\mathbb{E}\int_{t_0}^T\Big<\int_{t_0}^t\mathbb{A}_2(t,s)X(s)ds+\int_{t_0}^t\mathbb{C}_2(t,s)X(s)dW(s),Y(t)\Big>dt.
	\end{aligned}\end{equation}
	Moreover, by stochastic Fubini's theorem and generalized duality principle, we derive
	\small\begin{equation*}
		\begin{aligned}
			&\mathbb{E}\int_{t_0}^T\int_{t_0}^t\big<\mathbb{A}_2(t,s)X(s),Y(t)\big>dsdt\\
			=&\ \mathbb{E}\int_{t_0}^T\int_{t_0}^t\Big<Y^3(t),\int_s^t\psi_1(t,r)dW(r)\big[b_x(s)\hat{x}(s)+b_y(s)\hat{y}(s)+b_z(s)\hat{z}(s)+b_{\kappa}(s)\hat{\kappa}(s)\big] \Big>dsdt\\
		\end{aligned}
	\end{equation*}
	\begin{equation}\label{Malliavin on A2}
		\begin{aligned}
			=&\ \mathbb{E}\int_{t_0}^T\int_{t_0}^t\int_{t_0}^r\Big<Y^3(t),\psi_1(t,r)\big[b_x(s)\hat{x}(s)+b_y(s)\hat{y}(s)+b_z(s)\hat{z}(s)+b_{\kappa}(s)\hat{\kappa}(s)\big] \Big>dsdW(r)dt\\	
			=&\ \mathbb{E}\int_{t_0}^T\Big<Y^3(t),\int_{t_0}^t\Big[\psi_1(t,s)\int_{t_0}^s\big[b_x(r)\hat{x}(r)+b_y(r)\hat{y}(r)+b_z(r)\hat{z}(r)+b_{\kappa}(r)\hat{\kappa}(r)\big]dr\Big] dW(s)\Big>dt\\
			=&\ \mathbb{E}\int_{t_0}^T\int_{t_0}^t\Big<\mathbb{E}^{\mathcal{F}_s}\big[D_sY^3(t)\big],\psi_1(t,s)\int_{t_0}^s\big[b_x(r)\hat{x}(r)+b_y(r)\hat{y}(r)+b_z(r)\hat{z}(r)+b_{\kappa}(r)\hat{\kappa}(r)\big]dr\Big>dsdt\\
			=&\ \mathbb{E}\int_{t_0}^T\int_{t_0}^t\int_r^t\Big<\psi_1(t,s)^\top\mathbb{E}^{\mathcal{F}_s}\big[D_sY^3(t)\big],b_x(r)\hat{x}(r)+b_y(r)\hat{y}(r)+b_z(r)\hat{z}(r)+b_{\kappa}(r)\hat{\kappa}(r)\Big>dsdrdt\\
			=&\ \mathbb{E}\int_{t_0}^T\int_{t_0}^t\Big<\int_s^t\psi_1(t,r)^\top\mathbb{E}^{\mathcal{F}_r}\big[D_rY^3(t)\big]dr,b_x(s)\hat{x}(s)+b_y(s)\hat{y}(s)+b_z(s)\hat{z}(s)+b_{\kappa}(s)\hat{\kappa}(s)\Big>dsdt\\
			=&\ \mathbb{E}\int_{t_0}^T\int_s^T\Big<\int_s^t\psi_1(t,r)^\top\mathbb{E}^{\mathcal{F}_r}\big[D_rY^3(t)\big]dr,b_x(s)\hat{x}(s)+b_y(s)\hat{y}(s)+b_z(s)\hat{z}(s)+b_{\kappa}(s)\hat{\kappa}(s)\Big>dtds\\
			=&\ \mathbb{E}\int_{t_0}^T\Big<\int_t^T\int_t^s\psi_1(s,r)^\top\mathbb{E}^{\mathcal{F}_r}\big[D_rY^3(s)\big]drds,b_x(t)\hat{x}(t)+b_y(t)\hat{y}(t)+b_z(t)\hat{z}(t)+b_{\kappa}(t)\hat{\kappa}(t)\Big>dt\\
			=&\ \mathbb{E}\int_{t_0}^T\Big<\hat{\mathbb{A}}_2(t)^\top\int_t^T\int_t^s\psi_1(s,r)^\top\mathbb{E}^{\mathcal{F}_r}\big[D_rY^3(s)\big]drds,X(t)\Big>dt.
		\end{aligned}
	\end{equation}
	\normalsize
	With the (4.16) of Theorem 4.1 in \cite{YJM08}, one has
	\small
	\begin{equation}\label{Malliavin on C2}
		\begin{aligned}
			&\mathbb{E}\int_{t_0}^T\int_{t_0}^t\big<\mathbb{C}_2(t,s)X(s),Y(t)\big>dW(s)dt\\
			=&\ \mathbb{E}\int_{t_0}^T\int_{t_0}^t\Big<Y^3(t),\int_s^t\psi_1(t,r)dW(r)\big[\sigma_x(s)\hat{x}(s)+\sigma_y(s)\hat{y}(s)+\sigma_z(s)\hat{z}(s)+\sigma_{\kappa}(s)\hat{\kappa}(s)\big] \Big>dW(s)dt\\
			%=&\mathbb{E}\int_{t_0}^T\int_{t_0}^t\int_{t_0}^r\Big<Y^3(t),\psi(t,r)\big[\sigma_x(s)\hat{x}(s)+\sigma_y(s)\hat{y}(s)+\sigma_z(s)\hat{z}(s)+\sigma_{\kappa}(s)\hat{\kappa}(s)\big] \Big>dW(s)dW(r)dt\\
			=&\ \mathbb{E}\int_{t_0}^T\Big<Y^3(t),\int_{t_0}^t\Big[\psi_1(t,s)\int_{t_0}^s\big[\sigma_x(r)\hat{x}(r)+\sigma_y(r)\hat{y}(r)+\sigma_z(r)\hat{z}(r)+\sigma_{\kappa}(r)\hat{\kappa}(r)\big]dW(r)\Big] dW(s)\Big>dt\\
			=&\ \mathbb{E}\int_{t_0}^T\int_{t_0}^t\Big<\mathbb{E}^{\mathcal{F}_s}\big[D_sY^3(t)\big],\psi_1(t,s)\int_{t_0}^s\big[\sigma_x(r)\hat{x}(r)+\sigma_y(r)\hat{y}(r)+\sigma_z(r)\hat{z}(r)+\sigma_{\kappa}(r)\hat{\kappa}(r)\big]dW(r)\Big>dsdt\\
			=&\ \int_{t_0}^T\int_{t_0}^t\mathbb{E}\Big<\psi_1(t,s)^\top Z^3(t,s),\int_{t_0}^s\big[\sigma_x(r)\hat{x}(r)+\sigma_y(r)\hat{y}(r)+\sigma_z(r)\hat{z}(r)+\sigma_{\kappa}(r)\hat{\kappa}(r)\big]dW(r)\Big>dsdt\\
			=&\ \mathbb{E}\int_{t_0}^T\int_{t_0}^t\int_{t_0}^s\Big<\mathbb{E}^{\mathcal{F}_r}\big[D_r\big(\psi_1(t,s)^\top Z^3(t,s)\big)\big],\sigma_x(r)\hat{x}(r)+\sigma_y(r)\hat{y}(r)+\sigma_z(r)\hat{z}(r)+\sigma_{\kappa}(r)\hat{\kappa}(r)\Big>drdsdt\\
			%=&\mathbb{E}\int_{t_0}^T\int_{t_0}^t\int_r^t\Big<\mathbb{E}^{\mathcal{F}_r}\big[D_r\big(\psi(t,s)^\top Z^3(t,s)\big)\big],\sigma_x(r)\hat{x}(r)+\sigma_y(r)\hat{y}(r)+\sigma_z(r)\hat{z}(r)+\sigma_{\kappa}(r)\hat{\kappa}(r)\Big>dsdrdt\\
			=&\ \mathbb{E}\int_{t_0}^T\int_{t_0}^t\Big<\int_s^t\mathbb{E}^{\mathcal{F}_s}\big[D_s\big(\psi_1(t,r)^\top Z^3(t,r)\big)\big]dr,\sigma_x(s)\hat{x}(s)+\sigma_y(s)\hat{y}(s)+\sigma_z(s)\hat{z}(s)+\sigma_{\kappa}(s)\hat{\kappa}(s)\Big>dsdt\\
			%=&\mathbb{E}\int_{t_0}^T\int_s^T\Big<\int_s^t\mathbb{E}^{\mathcal{F}_s}\big[D_s\big(\psi(t,r)^\top Z^3(t,r)\big)\big]dr,\sigma_x(s)\hat{x}(s)+\sigma_y(s)\hat{y}(s)+\sigma_z(s)\hat{z}(s)+\sigma_{\kappa}(s)\hat{\kappa}(s)\Big>dtds\\
			=&\ \mathbb{E}\int_{t_0}^T\Big<\int_t^T\int_t^s\mathbb{E}^{\mathcal{F}_t}\big[D_t\big(\psi_1(s,r)^\top Z^3(s,r)\big)\big]drds,\sigma_x(t)\hat{x}(t)+\sigma_y(t)\hat{y}(t)+\sigma_z(t)\hat{z}(t)+\sigma_{\kappa}(t)\hat{\kappa}(t)\Big>dt\\
			=&\ \mathbb{E}\int_{t_0}^T\Big<\hat{\mathbb{C}}_2(t)^\top\int_t^T\int_t^s\mathbb{E}^{\mathcal{F}_t}\big[D_t\big(\psi_1(s,r)^\top Z^3(s,r)\big)\big]drds,X(t)\Big>dt.
		\end{aligned}
	\end{equation}
	\normalsize
	Substituting (\ref{Malliavin on A2}) and (\ref{Malliavin on C2}) into the (\ref{varphiY}) yields
	\begin{equation*}
		\begin{aligned}
			&\mathbb{E}\int_{t_0}^T\big<\varphi(t),Y(t)\big>dt\\
			=&\ \mathbb{E}\int_{t_0}^T\big<X(t),Y(t)\big>dt-\mathbb{E}\int_{t_0}^T\Big<\int_{t}^T\big[\mathbb{A}_1(s,t)^\top Y(s)+\mathbb{C}_1(s,t)^\top Z(s,t)\big]ds,X(t)\Big>dt\\
		\end{aligned}
	\end{equation*}
	\begin{equation}\label{varphiY-1}
		\begin{aligned}
			&-\mathbb{E}\int_{t_0}^T\Big<\hat{\mathbb{A}}_2(t)^\top\int_t^T\int_t^s\psi_1(s,r)^\top\mathbb{E}^{\mathcal{F}_r}\big[D_rY^3(s)\big]drds,X(t)\Big>dt\\
			&-\mathbb{E}\int_{t_0}^T\Big<\hat{\mathbb{C}}_2(t)^\top\int_t^T\int_t^s\mathbb{E}^{\mathcal{F}_t}\big[D_t\big(\psi_1(s,r)^\top Z^3(s,r)\big)\big]drds,X(t)\Big>dt\\
			=&\ \mathbb{E}\int_{t_0}^T\big<\psi(t),X(t)\big>dt.
		\end{aligned}
	\end{equation}
	
	In the meanwhile,
	\begin{equation}\begin{aligned}\label{HtimesX}
			&\mathbb{E}\big[\big\langle\mathbb{H}, X(T)\big\rangle\big]=\mathbb{E}\Big[\Big<\mathbb{H}^\top,\varphi(T)+\int_{t_0}^T\mathbb{A}_1(T,t)X(t)dt+\int_{t_0}^T\mathbb{C}_1(T,t)X(t)dW(t)\\
			&\quad +\int_{t_0}^T\mathbb{A}_2(T,t)X(t)dt+\int_{t_0}^T\mathbb{C}_2(T,t)X(t)dW(t)\Big>\Big]\\
			=&\ \mathbb{E}\Big[\big<\mathbb{H}^\top,\varphi(T)\big>+\int_{t_0}^T\big<X(t),\mathbb{A}_1(T,t)^\top\mathbb{H}^\top+\mathbb{C}_1(T,t)^\top\zeta(t)\big>dt\\
			&\quad+\Big<\mathbb{H}^\top,\int_{t_0}^T\mathbb{A}_2(T,t)X(t)dt\Big>+\Big<\mathbb{H}^\top,\int_{t_0}^T\mathbb{C}_2(T,t)X(t)dW(t)\Big>\Big].\\ 
	\end{aligned}\end{equation}
	Similar to the treatment of (\ref{Malliavin on A2}) and (\ref{Malliavin on C2}), respectively, the following results are obtained:
	\small
	\begin{equation}\label{Malliavin on HA2}
		\begin{aligned}
			&\mathbb{E}\Big[\Big<\mathbb{H}^\top,\int_{t_0}^T\mathbb{A}_2(T,t)X(t)dt\Big>\Big]\\
			=&\ \mathbb{E}\Big[\Big<h_{\kappa}(T)^\top,\int_{t_0}^T\Big[\int_t^T\psi_1(T,s)dW(s)\big[b_x(t)\hat{x}(t)+b_y(t)\hat{y}(t)+b_z(t)\hat{z}(t)+b_{\kappa}(t)\hat{\kappa}(t)\big]\Big]dt \Big>\Big]\\
			=&\ \mathbb{E}\Big[\Big<h_{\kappa}(T)^\top,\int_{t_0}^T\Big[\psi_1(T,t)\int_{t_0}^t\big[b_x(s)\hat{x}(s)+b_y(s)\hat{y}(s)+b_z(s)\hat{z}(s)+b_{\kappa}(s)\hat{\kappa}(s)\big]ds\Big]dW(t)\Big>\Big]\\
			=&\ \mathbb{E}\int_{t_0}^T\Big<\mathbb{E}^{\mathcal{F}_t}\big[D_th_{\kappa}(T)^\top\big],\psi_1(T,t)\int_{t_0}^t\big[b_x(s)\hat{x}(s)+b_y(s)\hat{y}(s)+b_z(s)\hat{z}(s)+b_{\kappa}(s)\hat{\kappa}(s)\big]ds\Big>dt\\
			=&\ \mathbb{E}\int_{t_0}^T\Big<\int_t^T\psi_1(T,s)^\top\mathbb{E}^{\mathcal{F}_s}\big[D_sh_{\kappa}(T)^\top\big]ds,b_x(t)\hat{x}(t)+b_y(t)\hat{y}(t)+b_z(t)\hat{z}(t)+b_{\kappa}(t)\hat{\kappa}(t)\Big>dt\\
			=&\ \mathbb{E}\int_{t_0}^T\Big<\hat{\mathbb{A}}_2(t)^\top\int_t^T\psi_1(T,s)^\top\mathbb{E}^{\mathcal{F}_s}\big[D_s\mathbb{H}^\top\big]ds,X(t)\Big>dt,
		\end{aligned}
	\end{equation}
	\normalsize
	and
	\small
	\begin{equation*}
		\begin{aligned}
			&\mathbb{E}\Big[\Big<\mathbb{H}^\top,\int_{t_0}^T\mathbb{C}_2(T,t)X(t)dt\Big>\Big]\\
			=&\ \mathbb{E}\Big[\Big<h_{\kappa}(T)^\top,\int_{t_0}^T\Big[\int_t^T\psi_1(T,s)dW(s)\big[\sigma_x(t)\hat{x}(t)+\sigma_y(t)\hat{y}(t)+\sigma_z(t)\hat{z}(t)+\sigma_{\kappa}(t)\hat{\kappa}(t)\big]\Big]dW(t) \Big>\Big]\\
			=&\ \mathbb{E}\Big[\Big<h_{\kappa}(T)^\top,\int_{t_0}^T\Big[\psi_1(T,t)\int_{t_0}^t\big[\sigma_x(s)\hat{x}(s)+\sigma_y(s)\hat{y}(s)+\sigma_z(s)\hat{z}(s)+\sigma_{\kappa}(s)\hat{\kappa}(s)\big]dW(s)\Big]dW(t)\Big>\Big]\\
			=&\ \mathbb{E}\int_{t_0}^T\Big<\mathbb{E}^{\mathcal{F}_t}\big[D_th_{\kappa}(T)^\top\big],\psi_1(T,t)\int_{t_0}^t\big[\sigma_x(s)\hat{x}(s)+\sigma_y(s)\hat{y}(s)+\sigma_z(s)\hat{z}(s)+\sigma_{\kappa}(s)\hat{\kappa}(s)\big]dW(s)\Big>dt\\
			=&\ \int_{t_0}^T\mathbb{E}\Big<\psi_1(T,t)^\top\zeta^3(t),\int_{t_0}^t\big[\sigma_x(s)\hat{x}(s)+\sigma_y(s)\hat{y}(s)+\sigma_z(s)\hat{z}(s)+\sigma_{\kappa}(s)\hat{\kappa}(s)\big]dW(s)\Big>dt\\
		\end{aligned}
	\end{equation*}
	\begin{equation}\label{Malliavin on HC2}
		\begin{aligned}
			=&\ \mathbb{E}\int_{t_0}^T\int_{t_0}^t\Big<\mathbb{E}^{\mathcal{F}_s}\big[D_s\big(\psi_1(T,t)^\top\zeta^3(t)\big)\big],\sigma_x(s)\hat{x}(s)+\sigma_y(s)\hat{y}(s)+\sigma_z(s)\hat{z}(s)+\sigma_{\kappa}(s)\hat{\kappa}(s)\Big>dsdt\\
			=&\ \mathbb{E}\int_{t_0}^T\Big<\int_t^T\mathbb{E}^{\mathcal{F}_t}\big[D_t\big(\psi_1(T,s)^\top\zeta^3(s)\big)\big]ds,\sigma_x(t)\hat{x}(t)+\sigma_y(t)\hat{y}(t)+\sigma_z(t)\hat{z}(t)+\sigma_{\kappa}(t)\hat{\kappa}(t)\Big>dt\\
			=&\ \mathbb{E}\int_{t_0}^T\Big<\hat{\mathbb{C}}_2(t)^\top\int_t^T\mathbb{E}^{\mathcal{F}_t}\big[D_t\big(\psi_1(T,s)^\top\zeta^3(s)\big)\big]ds,X(t)\Big>dt.
		\end{aligned}
	\end{equation}
	\normalsize
	Taking (\ref{Malliavin on HA2}) and (\ref{Malliavin on HC2}) into (\ref{HtimesX}) with (\ref{varphiY}), results in
	\begin{equation}\label{dual principle}
		\mathbb{E}\big[\big\langle\mathbb{H}, X(T)\big\rangle\big]+\mathbb{E}\Big[\int_{t_0}^T\mathbb{L}(t)X(t)dt\Big]
		=\mathbb{E}\big[\big<\mathbb{H}^\top,\varphi(T)\big>\big]+\mathbb{E}\Big[\int_{t_0}^T\big<\varphi(t),Y(t)\big>dt\Big].
	\end{equation}
	
	Applying the coefficient decomposition again to (\ref{dual principle}), we gain  
	\begin{equation}\begin{aligned}\label{deal Terminal term}
			&\mathbb{E}\int_{t_0}^T\big<Y(t),\varphi(t)\big>dt+\mathbb{E}\big[\mathbb{H}\varphi(T)\big]=\mathbb{E}\int_{t_0}^T\int_{t_0}^t\big[\big<Y(t),\mathbb{B}_1(t,s)\big>+\big<Z(t,s),\mathbb{D}_1(t,s)\big>\big]dsdt\\
			&\quad +\mathbb{E}\int_{t_0}^T\Big<Y(t),\int_{t_0}^t\mathbb{B}_2(t,s)ds+ \int_{t_0}^t\mathbb{D}_2(t,s)dW(s)\Big>dt+\mathbb{E}\Big[\mathbb{H}\int_{t_0}^T\mathbb{B}_1(T,s)ds\\
			&\quad +\int_{t_0}^T\big<\zeta(s),\mathbb{D}_1(T,s)\big>ds\Big]
			 +\mathbb{E}\Big[\mathbb{H}\int_{t_0}^T\mathbb{B}_2(T,s)ds+\int_{t_0}^T\big<\zeta(s),\mathbb{D}_2(T,s)\big>ds\Big].
	\end{aligned}\end{equation}
	\normalsize
	Since
	\begin{equation*}\begin{aligned}
			&\mathbb{E}\int_{t_0}^T\int_{t_0}^t\big<Y(t),\mathbb{B}_2(t,s)\big>dsdt
			=\mathbb{E}\int_{t_0}^T\int_{t_0}^t\big<Y^3(t),\mathcal{E}_2(t,s)\Xi_b(s)\big>dsdt\\
			=&\ \mathbb{E}\int_{t_0}^T\Big<Y^3(t),\int_{t_0}^t\Big[\psi_1(t,s)\int_{t_0}^s\Xi_b(r)dr\Big]dW(s)\Big>dt\\
			=&\ \mathbb{E}\int_{t_0}^T\int_{t_0}^t\Big<\mathbb{E}^{\mathcal{F}_s}\big[D_sY^3(t)\big],\psi_1(t,s)\int_{t_0}^s\Xi_b(r)dr\Big>dsdt\\
			=&\ \mathbb{E}\int_{t_0}^T\int_{t_0}^t\Big<\int_s^t\psi_1(t,r)^\top\mathbb{E}^{\mathcal{F}_r}\big[D_rY^3(t)\big]dr,\Xi_b(s)\Big>dsdt\\
			=&\ \mathbb{E}\int_{t_0}^T\Big<\int_s^T\int_s^t\psi_1(t,r)^\top\mathbb{E}^{\mathcal{F}_r}\big[D_rY^3(t)\big]drdt,\Xi_b(s)\Big>ds,
	\end{aligned}\end{equation*}
	and
	\begin{equation*}\begin{aligned}
			&\mathbb{E}\int_{t_0}^T\int_{t_0}^t\big<Y(t),\mathbb{D}_2(t,s)\big>dW(s)dt
			=\mathbb{E}\int_{t_0}^T\int_{t_0}^t\big<Y^3(t),\mathcal{E}_2(t,s)\Xi_{\sigma}(s)\big>dW(s)dt\\
			=&\ \mathbb{E}\int_{t_0}^T\Big<Y^3(t),\int_{t_0}^t\Big[\psi_1(t,s)\int_{t_0}^s\Xi_{\sigma}(r)dW(r)\Big]dW(s)\Big>dt\\
			=&\ \mathbb{E}\int_{t_0}^T\int_{t_0}^t\Big<\mathbb{E}^{\mathcal{F}_s}\big[D_sY^3(t)\big],\psi_1(t,s)\int_{t_0}^s\Xi_{\sigma}(r)dW(r)\Big>dsdt\\ 
			=&\ \mathbb{E}\int_{t_0}^T\int_{t_0}^t\int_{t_0}^s\Big<\mathbb{E}^{\mathcal{F}_r}\big[D_r\big(\psi_1(t,s)^\top Z^3(t,s)\big)\big],\Xi_{\sigma}(r)\Big>drdsdt\\
			=&\ \mathbb{E}\int_{t_0}^T\int_{t_0}^t\Big<\int_s^t\mathbb{E}^{\mathcal{F}_s}\big[D_s\big(\psi_1(t,r)^\top Z^3(t,r)\big)\big]dr,\Xi_{\sigma}(s)\Big>dsdt\\
			=&\ \mathbb{E}\int_{t_0}^T\Big<\int_s^T\int_s^t\mathbb{E}^{\mathcal{F}_s}\big[D_s\big(\psi_1(t,r)^\top Z^3(t,r)\big)\big]drdt,\Xi_{\sigma}(s)\Big>ds,
	\end{aligned}\end{equation*}
	by Proposition \ref{pro2.5} and generalized duality principle, one has
	\begin{equation}\begin{aligned}\label{HB2}
			&\mathbb{E}\Big[\Big\langle\mathbb{H}^\top,\int_{t_0}^T\mathbb{B}_2(T,s)ds\Big\rangle\Big]
			=\mathbb{E}\Big[\Big<h_{\kappa}(T)^\top,\int_{t_0}^T\Big(\psi_1(T,s)\int_{t_0}^s\Xi_b(r)dr\Big)dW(s)\Big> \Big]\\
			=&\ \mathbb{E}\Big[\int_{t_0}^T\Big<\zeta^3(s),\psi_1(T,s)\int_{t_0}^s\Xi_b(r)dr\Big>ds\Big]
			=\mathbb{E}\int_{t_0}^T\Big<\int_s^T\psi_1(T,r)^\top\zeta^3(r)dr,\Xi_b(s)\Big>ds,
	\end{aligned}\end{equation}
	and
	\begin{equation}\begin{aligned}\label{HD2}
			&\mathbb{E}\Big[\mathbb{H}^\top,\int_{t_0}^T\mathbb{D}_2(T,s)dW(s)\Big]
			=\mathbb{E}\Big[\Big<h_{\kappa}(T)^\top,\int_{t_0}^T\Big(\psi_1(T,s)\int_{t_0}^s\Xi_{\sigma}(r)dW(r)\Big)dW(s)\Big> \Big]\\
			=&\ \mathbb{E}\Big[\int_{t_0}^T\Big<\zeta^3(s),\psi_1(T,s)\int_{t_0}^s\Xi_{\sigma}(r)dW(r)\Big>ds\Big]\\
			=&\ \mathbb{E}\int_{t_0}^T\int_{t_0}^s\Big<\mathbb{E}^{\mathcal{F}_r}\big[D_r\big(\psi_1(T,s)^\top\zeta^3(s)\big) \big],\Xi_{\sigma}(r) \Big>drds\\
			=&\ \mathbb{E}\int_{t_0}^T\Big<\int_s^T\mathbb{E}^{\mathcal{F}_s}\big[D_s\big(\psi_1(T,r)^\top\zeta^3(r)\big) \big]dr,\Xi_{\sigma}(s)\Big>ds.
	\end{aligned}\end{equation}
	Finally, substituting (\ref{dual principle})-(\ref{HD2}) into \eqref{variational inequality-1}, some calculations yields (\ref{variational inequality-2}) with \eqref{coefficient1}.
\end{proof}

In order to write the variational inequality (\ref{variational inequality-2}) in more concise form, and give the main result in this subsection. To this end, let us define
\begin{equation}\label{defp}
	\begin{aligned}
		p(t):&=\eta^1(t)+\eta^2(t)\mathbb{I}_{[t_0,T-\delta)}(t)+\mathcal{E}_1(T,t)^\top\eta^3(t)+\mathbb{E}^{\mathcal{F}_t}\Big[\int_t^T\psi_1(T,r)^\top D_r\eta^4(r)dr\Big]\\
		&\quad+\mathbb{E}^{\mathcal{F}_t}\Big[\int_t^T Y^1(s)ds+\int_{t+\delta}^T Y^2(s)ds\mathbb{I}_{[t_0,T-\delta)}(t)+\int_t^T \mathcal{E}_1(s,t)^\top Y^3(s)ds\\
		&\quad+\int_t^T\int_t^s\psi_1(s,r)^\top D_rY^4(s)drds\Big],\\
	\end{aligned}
\end{equation}
\begin{equation}\label{defq}
	\begin{aligned}
		q(t):&=\zeta^1(t)+\zeta^2(t)\mathbb{I}_{[t_0,T-\delta)}(t)+\mathcal{E}_1(T,t)^\top\zeta^3(t)+\int_t^T\mathbb{E}^{\mathcal{F}_t}\big[D_t\big(\psi_1(T,r)^\top\zeta^4(r)\big)\big]dr\\
		&\quad+\int_t^T Z^1(s,t)ds+\int_{t+\delta}^T Z^2(s,t)ds\mathbb{I}_{[t_0,T-\delta)}(t)+\int_t^T \mathcal{E}_1(s,t)^\top Z^3(s,t)ds\\
		&\quad+\int_t^T\int_t^s\mathbb{E}^{\mathcal{F}_t}\big[D_t\big(\psi_1(s,r)^\top Z^4(s,r)\big)\big]drds.
	\end{aligned}
\end{equation}

Now, we introduce the following Hamiltonian function $\mathcal{H}:\ [0,T]\times\mathbb{R}^n\times\mathbb{R}^n\times\mathbb{R}^n\times\mathbb{R}^n\times U\times U\times U\times U\times\mathbb{R}^n\times\mathbb{R}^n\to\mathbb{R}$ by
\begin{equation}\begin{aligned}\label{hamilton}
		\mathcal{H}(t,x,y,z,\kappa,p,q,u,\mu,\nu,\lambda)&:=l(t,x,y,z,\kappa,u,\mu,\nu,\lambda)+\langle b(t,x,y,z,\kappa,u,\mu,\nu,\lambda), p\rangle\\
		&\quad +\langle \sigma(t,x,y,z,\kappa,u,\mu,\nu,\lambda),q\rangle, 
\end{aligned}\end{equation}
and denote
\begin{equation*}
	\mathcal{H}^\ast(t):=\mathcal{H}(t,x,y,z,\kappa,p,q,u^\ast,\mu^\ast,\nu^\ast,\lambda^\ast).
\end{equation*}

\begin{mythm}\label{SPM}(Stochastic maximum principle) 
	Let \textbf{(H3.1)} be satisfied. Suppose that  $(x^\ast(\cdot),u^\ast(\cdot))$ is an optimal pair to the \textbf{Problem (P)}, $(\eta(\cdot),\zeta(\cdot),Y(\cdot),Z(\cdot,\cdot))$ is a solution to (\ref{adjoint equation}), $(p(\cdot),q(\cdot))$ are defined by (\ref{defp})-(\ref{defq}). Then, the maximum principle holds:
	\small 
	\begin{equation}\begin{aligned}\label{maximum principle}
			&\bigg<\mathcal{H}^\ast_u(t)+\mathbb{E}^{\mathcal{F}_t}\Big[\mathcal{H}^\ast_{\mu}(t+\delta)\mathbb{I}_{[t_0,T-\delta)}(t)+\int_t^T\phi_2(s,t)^\top \mathcal{H}^\ast_{\nu}(s)ds\\
			&\qquad+\int_t^T\psi_2(s,t)^\top D_t\mathcal{H}^\ast_{\lambda}(s)ds \Big],u-u^{\ast}(t)\bigg>\le 0,\quad\forall u\in U,\ a.e.\ t\in[t_0,T],\ \mathbb{P}\mbox{-}a.s.&
	\end{aligned}\end{equation}
\end{mythm}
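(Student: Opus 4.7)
The plan is to start from the variational inequality (\ref{variational inequality-2}) established in the preceding lemma, identify the brackets multiplying $\Xi_b(s)$ and $\Xi_\sigma(s)$ with the adjoint processes $p(s)$ and $q(s)$ defined in (\ref{defp})--(\ref{defq}), and then distribute over $\Xi_l, \Xi_b, \Xi_\sigma$ (which depend linearly on $v(s)$, $v(s-\delta)$, $\int\phi_2 v\,d\tau$, $\int\psi_2 v\,dW$) to extract a pointwise maximum condition via a time shift, Fubini's theorem, and the generalized duality formula (\ref{duality formula}).

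First I would note that $\Xi_l(s), \Xi_b(s), \Xi_\sigma(s)$ are $\mathcal{F}_s$-measurable, so inside the outer expectation the coefficients against them may be replaced by their $\mathcal{F}_s$-conditional expectations. Using the BSDE in (\ref{adjoint equation}) together with the relation $D_r\eta(r) = \zeta(r)$ from Proposition \ref{pro2.5} and the commutation $\mathbb{E}^{\mathcal{F}_r}[D_r\,\cdot\,] = D_r\mathbb{E}^{\mathcal{F}_r}[\,\cdot\,]$ for Malliavin differentiable random variables, the bracket multiplying $\Xi_b(s)$ in (\ref{variational inequality-2}) collapses into $p(s)$, and the bracket multiplying $\Xi_\sigma(s)$ collapses into $q(s)$. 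Inserting the definition of the Hamiltonian (\ref{hamilton}) then rewrites the inequality as
\begin{equation*}
\mathbb{E}\int_{t_0}^T\Big\{\langle \mathcal{H}^\ast_u(s), v(s)\rangle + \langle \mathcal{H}^\ast_\mu(s), v(s-\delta)\rangle + \Big\langle \mathcal{H}^\ast_\nu(s), \int_{t_0}^s\phi_2(s,\tau)v(\tau)d\tau\Big\rangle + \Big\langle \mathcal{H}^\ast_\lambda(s), \int_{t_0}^s\psi_2(s,\tau)v(\tau)dW(\tau)\Big\rangle\Big\}ds \le 0.
\end{equation*}

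Next I would reduce each of the four terms to an integral against $v(t)$: the $v(s)$ term is already in position; the $v(s-\delta)$ term becomes $\mathbb{E}\int_{t_0}^T\langle\mathcal{H}^\ast_\mu(t+\delta)\mathbb{I}_{[t_0,T-\delta)}(t), v(t)\rangle dt$ after the translation $t = s - \delta$ together with $v \equiv 0$ on $[t_0-\delta, t_0]$ (admissible perturbations must preserve the initial control trajectory); the $\phi_2$ term is reshaped by a deterministic Fubini into $\int_t^T\phi_2(s,t)^\top \mathcal{H}^\ast_\nu(s)\,ds$ paired with $v(t)$; and the $\psi_2$ term is handled by applying (\ref{duality formula}) componentwise with $F = \mathcal{H}^\ast_\lambda(s)$ and integrand $\psi_2(s,\tau)v(\tau)\mathbb{I}_{[t_0,s]}(\tau)$, followed by a Fubini interchange in $(s,\tau)$, yielding $\int_t^T\psi_2(s,t)^\top D_t\mathcal{H}^\ast_\lambda(s)\,ds$ as the coefficient of $v(t)$. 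Because each resulting coefficient is then paired with the $\mathcal{F}_t$-adapted $v(t)$, inserting $\mathbb{E}^{\mathcal{F}_t}[\,\cdot\,]$ freely recovers precisely the integrand in (\ref{maximum principle}). A standard convex-variation argument, with $v(t) = (u - u^\ast(t))\mathbb{I}_A(t)\mathbb{I}_B$ for arbitrary measurable $A \subset [t_0,T]$ and $B \in \mathcal{F}_t$ and $u\in U$, then delivers the pointwise maximum condition.

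I expect the main obstacle to lie in the last of the four reductions, namely the treatment of the extended noisy control term $\int_{t_0}^s \psi_2(s,\tau)v(\tau)dW(\tau)$, which requires invoking the generalized Malliavin duality on the $\mathcal{F}_T$-measurable $\mathcal{H}^\ast_\lambda(s)$ and justifying the commutation of $D_t$ with both the $\mathcal{F}_t$-conditional expectation and the deterministic $s$-integration. The existence and square-integrability of $D_t\mathcal{H}^\ast_\lambda(s)$ should follow from the Malliavin regularity of $(p,q)$ inherited from the adjoint system (\ref{adjoint equation}) and the chain rule under \textbf{(H3.1)}; once these are available, the remaining manipulations are routine book-keeping.
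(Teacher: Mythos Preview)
Your proposal is correct and follows essentially the same route as the paper: starting from (\ref{variational inequality-2}), identifying the two brackets with $p(s)$ and $q(s)$ via the adjoint BSDE and the definitions (\ref{defp})--(\ref{defq}), passing to the Hamiltonian form, and then collapsing the four control-delay terms onto $v(t)$ by a time shift, Fubini, and the generalized duality formula (\ref{duality formula}). The only cosmetic difference is in the final localization: the paper first restricts $v$ to a short interval $[t,t+\varepsilon]$ and sends $\varepsilon\to 0$ to obtain the expected pointwise inequality (\ref{variineq3}), and then separately takes $w(t)=u\mathbb{I}_A+u^\ast(t)\mathbb{I}_{A^c}$ with $A\in\mathcal{F}_t$ to strip the expectation, whereas you propose a combined choice $v(t)=(u-u^\ast(t))\mathbb{I}_A(t)\mathbb{I}_B$; either variant is standard and yields (\ref{maximum principle}).
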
	
\normalsize 
\begin{proof}	
	By (\ref{variational inequality-2}) and Hamiltonian function (\ref{hamilton}), we obtain
	\begin{equation}\begin{aligned}\label{variationalineq2}
			&\mathbb{E}\int_{t_0}^T\big<\mathcal{H}^\ast_u(t),v(t)\big>dt+\mathbb{E}\int_{t_0}^T\big<\mathcal{H}^\ast_{\mu}(t),v(t-\delta)\big>dt\\
			&+\mathbb{E}\int_{t_0}^T\Big<\mathcal{H}^\ast_{\nu}(t),\int_{t_0}^t\phi_2(t,s)v(s)ds\Big>dt+\mathbb{E}\int_{t_0}^T\Big<\mathcal{H}^\ast_{\lambda}(t),\int_{t_0}^t\psi_2(t,s)v(s)dW(s)\Big>dt\\
			=&\ \mathbb{E}\int_{t_0}^T\big<\mathcal{H}^\ast_u(t),v(t)\big>dt+\mathbb{E}\int_{t_0}^T\big<\mathcal{H}^\ast_{\mu}(t),v(t-\delta)\big>dt+\mathbb{E}\int_{t_0}^T\Big<\int_t^T\phi_2(s,t)^\top \mathcal{H}^\ast_{\nu}(s)ds,v(t) \Big>dt\\
			&+\mathbb{E}\int_{t_0}^T\Big<\int_t^T\psi_2(s,t)^\top \mathbb{E}^{\mathcal{F}_t}\big[D_t\mathcal{H}^\ast_{\lambda}(s)\big]ds,v(t) \Big>dt\le 0.
	\end{aligned}\end{equation}
	Since $u^{\ast}(s)+v(s)$ is an admissible control, we set
	\begin{equation*}
		u^{\ast}(s)+v(s)= 
		\begin{cases}
			u^{\ast}(s), & s\notin[t,t+\varepsilon],\\
			u(s), & s\in[t,t+\varepsilon],
		\end{cases}
	\end{equation*}
	where $t\in[t_0,T]$ and $u(\cdot)$ is an admissible control. Recall (\ref{variationalineq2}), one has
	\begin{equation*}\begin{aligned}
			\frac{1}{\varepsilon}\mathbb{E}\int_t^{t+\varepsilon}\bigg<&\mathcal{H}^\ast_u(s)+\mathbb{E}^{\mathcal{F}_s}\Big[\mathcal{H}^\ast_{\mu}(s+\delta)\mathbb{I}_{[t_0,T-\delta)}(s)+\int_s^T\phi_2(r,s)^\top \mathcal{H}^\ast_{\nu}(r)dr\\
			&+\int_s^T\psi_2(r,s)^\top D_s\mathcal{H}^\ast_{\lambda}(r)dr \Big],u(s)-u^{\ast}(s)\bigg>ds\le 0.
	\end{aligned}\end{equation*}
	Let $\varepsilon\to 0$, which results in
	\begin{equation}\begin{aligned}\label{variineq3}
			\mathbb{E}\bigg<&\mathcal{H}^\ast_u(t)+\mathbb{E}^{\mathcal{F}_t}\Big[\mathcal{H}^\ast_{\mu}(t+\delta)\mathbb{I}_{[t_0,T-\delta)}(t)+\int_t^T\phi_2(s,t)^\top \mathcal{H}^\ast_{\nu}(s)ds\\
			&+\int_t^T\psi_2(s,t)^\top D_t\mathcal{H}^\ast_{\lambda}(s)ds \Big],u(t)-u^{\ast}(t)\bigg>\le 0,\quad a.e.\ t\in[t_0,T].
	\end{aligned}\end{equation}
	For $\forall A\in\mathcal{F}_t$, let $u\in U$ be any deterministic element. Set $w(t)=uI_A+u^{\ast}(t)I_{A^c}$, then $w(\cdot)\in \mathcal{U}_{ad}$. Taking $w(\cdot)$ into (\ref{variineq3}) yields
	\begin{equation*}\begin{aligned}
			\mathbb{E}\bigg[\bigg<&\mathcal{H}^\ast_u(t)+\mathbb{E}^{\mathcal{F}_t}\Big[\mathcal{H}^\ast_{\mu}(t+\delta)\mathbb{I}_{[t_0,T-\delta)}(t)+\int_t^T\phi_2(s,t)^\top \mathcal{H}^\ast_{\nu}(s)ds\\
			&+\int_t^T\psi_2(s,t)^\top D_t\mathcal{H}^\ast_{\lambda}(s)ds \Big],u-u^{\ast}(t)\bigg>\mathbb{I}_A\bigg]\le 0,\quad a.e.\ t\in[t_0,T].
	\end{aligned}\end{equation*}
	Since $A$ is the any element of $\mathcal{F}_t$, which implies
	\begin{equation*}\begin{aligned}\label{variational inequality-3}
			&\mathbb{E}\bigg[\bigg<\mathcal{H}^\ast_u(t)+\mathbb{E}^{\mathcal{F}_t}\Big[\mathcal{H}^\ast_{\mu}(t+\delta)\mathbb{I}_{[t_0,T-\delta)}(t)+\int_t^T\phi_2(s,t)^\top \mathcal{H}^\ast_{\nu}(s)ds\\
			&\qquad +\int_t^T\psi_2(s,t)^\top D_t\mathcal{H}^\ast_{\lambda}(s)ds \Big],u-u^{\ast}(t)\bigg>\bigg|\mathcal{F}_t\bigg]\\
			=&\ \bigg<\mathcal{H}^\ast_u(t)+\mathbb{E}^{\mathcal{F}_t}\Big[\mathcal{H}^\ast_{\mu}(t+\delta)\mathbb{I}_{[t_0,T-\delta)}(t)+\int_t^T\phi_2(s,t)^\top \mathcal{H}^\ast_{\nu}(s)ds\\
			&\qquad +\int_t^T\psi_2(s,t)^\top D_t\mathcal{H}^\ast_{\lambda}(s)ds \Big],u-u^{\ast}(t)\bigg>\le 0,\quad \forall u\in U,\ a.e.\ t\in[t_0,T],\ \mathbb{P}\mbox{-}a.s.
	\end{aligned}\end{equation*}
	We complete this proof.
\end{proof}

\begin{Remark}
	The method is effective. By transforming the variational equation SDDE (\ref{variational equation}) into an SVIE, and introducing an adjoint equation (\ref{adjoint equation}) consisting of a BSDE and a BSVIE, we establish the stochastic maximum principle in Theorem \ref{SPM}. In contrast to the literature \cite{CW10, CZ26, DMOR16, ZF21} and the references therein, the extended mixed delay of the state enters the terminal cost functional rather than the terminal state without delay.
\end{Remark}

\subsection{Verification Theorem}

The stochastic maximum principle has been derived above. Furthermore, under suitable concavity assumptions, the associated maximum condition on the Hamiltonian $\mathcal{H}$ and terminal cost $h$ yield sufficient conditions for the optimality of {\bf Problem (P)}.

Firstly, some assumptions are imposed on $\mathcal{H}$ and $g$ as follows.

(\textbf{H4.1})
The Hamiltonian function $\mathcal{H}(t,x,y,z,\kappa,p,q,u,\mu,\nu,\lambda)$ is concave in $(x,y,z,\kappa,u,\mu,\nu,\lambda)$ and the terminal cost $h(x,y,z,\kappa)$ is concave in $(x,y,z,\kappa)$, for given $p(\cdot),q(\cdot)$.

\begin{mythm}\label{sufficient conition}
	(Verification Theorem) Assume that $\bar{u}(\cdot)\in\mathcal{U}_{ad}$, $\bar{x}(\cdot)$ denotes the corresponding trajectory, $(p(\cdot),q(\cdot))$ defined by (\ref{defp})-(\ref{defq}), where $(\eta(\cdot),\zeta(\cdot),Y(\cdot),Z(\cdot,\cdot))$ is a solution to (\ref{adjoint equation}). Suppose that (\ref{maximum principle}) is satisfied under the assumptions \textbf{(H3.1)} and \textbf{(H4.1)}, then $\bar{u}(\cdot)$ is an optimal control of \textbf{Problem (P)}.   
\end{mythm}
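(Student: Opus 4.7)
The plan is to show $J(\bar u(\cdot))\ge J(u(\cdot))$ for every admissible $u(\cdot)\in\mathcal{U}_{ad}$ with corresponding state trajectory $x(\cdot)$. Introduce the differences $\Delta x:=\bar x-x$, $\Delta u:=\bar u-u$ and analogously $\Delta y,\Delta z,\Delta\kappa,\Delta\mu,\Delta\nu,\Delta\lambda$, and set $\Delta X:=(\Delta x^\top,\Delta y^\top,\Delta z^\top,\Delta\kappa^\top)^\top$. First I would split
\begin{equation*}
J(\bar u)-J(u)=\mathbb{E}\int_{t_0}^T\!\big[l(\bar{\cdot})(t)-l(\cdot)(t)\big]dt+\mathbb{E}\big[h(\bar x(T),\bar y(T),\bar z(T),\bar\kappa(T))-h(x(T),y(T),z(T),\kappa(T))\big],
\end{equation*}
and apply the two concavity bounds of \textbf{(H4.1)}: concavity of $h$ yields $\mathbb{E}[h(\bar{\cdot}(T))-h(\cdot(T))]\ge\mathbb{E}[\mathbb{H}\,\Delta X(T)]$, while concavity of $\mathcal{H}$ in $(x,y,z,\kappa,u,\mu,\nu,\lambda)$ (with $(p,q)$ frozen at $(p(t),q(t))$) together with the identity $l=\mathcal{H}-\langle b,p\rangle-\langle\sigma,q\rangle$ gives
\begin{equation*}
l(\bar{\cdot})-l(\cdot)\ge\sum_{\varrho=x,y,z,\kappa,u,\mu,\nu,\lambda}\mathcal{H}_\varrho(\bar{\cdot})\,\Delta\varrho-\langle p,b(\bar{\cdot})-b(\cdot)\rangle-\langle q,\sigma(\bar{\cdot})-\sigma(\cdot)\rangle.
\end{equation*}

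The crucial step is to establish the generalized duality identity
\begin{equation*}
\mathbb{E}\int_{t_0}^T\!\sum_{\varrho=x,y,z,\kappa}\mathcal{H}_\varrho(\bar{\cdot})\,\Delta\varrho\,dt+\mathbb{E}\big[\mathbb{H}\,\Delta X(T)\big]=\mathbb{E}\int_{t_0}^T\!\big[\langle p(t),b(\bar{\cdot})-b(\cdot)\rangle+\langle q(t),\sigma(\bar{\cdot})-\sigma(\cdot)\rangle\big]dt,
\end{equation*}
which plays the role of ``It\^o's formula applied to $\langle p,\bar x-x\rangle$'' in the classical non-delay verification argument and which the adjoint equation \eqref{adjoint equation} together with the representations \eqref{defp}--\eqref{defq} has been engineered to enforce. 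Its derivation retraces the computation of Lemma 4.2: use Fubini and stochastic Fubini (as in \eqref{z(t)tansform} and \eqref{kappa(t)transform}) to rewrite $\Delta z,\Delta\kappa$ as $\mathcal{E}_1,\mathcal{E}_2$-weighted integrals of $b(\bar{\cdot})-b(\cdot)$ and $\sigma(\bar{\cdot})-\sigma(\cdot)$, expand the left-hand side using the BSDE \eqref{adjoint equation}(a), the BSVIE \eqref{adjoint equation}(b) and the M-solution property \eqref{adjoint equation}(c), and process the Malliavin-derivative terms originating from the extended noisy memory via the generalized duality formula \eqref{duality formula} and the Clark-Ocone relation \eqref{Clarke-Ocone theorem}. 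I expect this to be the main technical obstacle, since it parallels the entire block of manipulations that begins at \eqref{varphiY} in the proof of Lemma 4.2, now carried out with the nonlinear differences $b(\bar{\cdot})-b(\cdot),\sigma(\bar{\cdot})-\sigma(\cdot)$ in place of the linear variations $\Xi_b,\Xi_\sigma$.

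Substituting this identity into the two concavity bounds cancels all ``state-direction'' and terminal contributions and leaves
\begin{equation*}
J(\bar u)-J(u)\ge\mathbb{E}\int_{t_0}^T\sum_{\varrho=u,\mu,\nu,\lambda}\mathcal{H}_\varrho(\bar{\cdot})(t)\,\Delta\varrho(t)\,dt.
\end{equation*}
Using $\Delta\mu(t)=\Delta u(t-\delta)$, $\Delta\nu(t)=\int_{t_0}^t\phi_2(t,s)\Delta u(s)ds$ and $\Delta\lambda(t)=\int_{t_0}^t\psi_2(t,s)\Delta u(s)dW(s)$, I would then apply a time change (point delay), Fubini's theorem (distributed delay) and the generalized duality formula \eqref{duality formula} (extended noisy memory), exactly as in the last step of the proof of Theorem \ref{SPM}, to recast the right-hand side as
\begin{equation*}
\mathbb{E}\int_{t_0}^T\Big\langle\mathcal{H}_u^\ast(t)+\mathbb{E}^{\mathcal{F}_t}\Big[\mathcal{H}_\mu^\ast(t+\delta)\mathbb{I}_{[t_0,T-\delta)}(t)+\int_t^T\phi_2(s,t)^\top\mathcal{H}_\nu^\ast(s)ds+\int_t^T\psi_2(s,t)^\top D_t\mathcal{H}_\lambda^\ast(s)ds\Big],\bar u(t)-u(t)\Big\rangle dt.
\end{equation*}
Since the maximum principle \eqref{maximum principle} is assumed to hold with $u^\ast=\bar u$, the bracketed quantity paired with $u(t)-\bar u(t)$ is non-positive for every $u\in U$, a.e.\ $t$, $\mathbb{P}$-a.s., so the display above is non-negative. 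Hence $J(\bar u)\ge J(u)$, which proves that $\bar u(\cdot)$ is optimal for \textbf{Problem (P)}.
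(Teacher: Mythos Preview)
Your proposal is correct and follows essentially the same approach as the paper's proof: split $J(\bar u)-J(u)$ via the Hamiltonian, apply the concavity assumptions \textbf{(H4.1)}, invoke the generalized duality identity built from the adjoint equation \eqref{adjoint equation} (which the paper derives through \eqref{Terminal term BSDE}--\eqref{realtionship}, exactly paralleling the computations \eqref{varphiY}--\eqref{dual principle} as you anticipate), then collapse the control-delay terms by time shift, Fubini, and the generalized duality formula, and conclude using \eqref{maximum principle}. The only cosmetic difference is that the paper bounds $J(u)-J(\bar u)\le 0$ rather than $J(\bar u)-J(u)\ge 0$, and writes out the intermediate identity \eqref{realtionship} explicitly rather than stating it as a single duality relation.
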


\begin{proof}
	For any admissible control $u(\cdot)\in\mathcal{U}_{ad}$, $x^u(\cdot)$ represents the corresponding trajectory. Let
	\begin{equation*}\begin{aligned}
			\Theta^u(t)&\equiv\big(x^u(t),y^u(t),z^u(t),\kappa^u(t),u(t),\mu(t),\nu(t),\lambda(t)\big),\\
			\bar{\Theta}(t)&\equiv\big(\bar{x}(t),\bar{y}(t),\bar{z}(t),\bar{\kappa}(t),\bar{u}(t),\bar{\mu}(t),\bar{\nu}(t),\bar{\lambda}(t)\big).
	\end{aligned}\end{equation*}
	With the concavity of $\mathcal{H}$ and $g$, we deduce
	\small
	\begin{equation*}\begin{aligned}
			&J\big(u(\cdot)\big)-J\big(\bar{u}(\cdot)\big)\\
			=&\ \mathbb{E}\bigg\{\int_{t_0}^T\Big[\mathcal{H}\big(t,\Theta^u(t),p(t),q(t)\big)-\mathcal{H}\big(t,\bar{\Theta}(t),p(t),q(t)\big)-\big<p(t),b\big(t,\Theta^u(t)\big)-b\big(t,\bar{\Theta}(t)\big)\big>-\big<q(t),\\
			&\qquad \sigma\big(t,\Theta^u(t)\big)-\sigma\big(t,\bar{\Theta}(t)\big)\big>    \Big]dt+h\big(x(T),y(T),z(T),\kappa(T) \big)-h\big(\bar{x}(T),\bar{y}(T),\bar{z}(T),\bar{\kappa}(T)\big)\bigg\}\\
	\end{aligned}\end{equation*}	
	\begin{equation}\begin{aligned}\label{suffi cost func}	
			\le &\ \mathbb{E}\bigg\{\int_{t_0}^T\Big[\big<\mathcal{H}_u\big(t,\bar{\Theta}(t),p(t),q(t)\big),u(t)-\bar{u}(t)\big>+\big<\mathcal{H}_{\mu}\big(t,\bar{\Theta}(t),p(t),q(t)\big),\mu(t)-\bar{\mu}(t)\big>\\
			&\quad+\big<\mathcal{H}_{\nu}\big(t,\bar{\Theta}(t),p(t),q(t)\big),\nu(t)-\bar{\nu}(t)\big>+\big<\mathcal{H}_{\lambda}\big(t,\bar{\Theta}(t),p(t),q(t)\big),\lambda(t)-\bar{\lambda}(t)\big>\\
			&\quad+\big<\mathcal{H}_x\big(t,\bar{\Theta}(t),p(t),q(t)\big),x(t)-\bar{x}(t)\big>+\big<\mathcal{H}_y\big(t,\bar{\Theta}(t),p(t),q(t)\big),y(t)-\bar{y}(t)\big>\\
			&\quad+\big<\mathcal{H}_z\big(t,\bar{\Theta}(t),p(t),q(t)\big),z(t)-\bar{z}(t)\big>+\big<\mathcal{H}_{\kappa}\big(t,\bar{\Theta}(t),p(t),q(t)\big),\kappa(t)-\bar{\kappa}(t)\big>\\
			&\quad-\big<p(t),b\big(t,\Theta^u(t)\big)-b\big(t,\bar{\Theta}(t)\big)\big>-\big<q(t),\sigma\big(t,\Theta^u(t)\big)-\sigma\big(t,\bar{\Theta}(t)\big)\big>\Big]dt+\mathbb{H}\mathring{X}(T)\bigg\},
	\end{aligned}\end{equation}
	\normalsize
	where 
	\begin{equation*}
		\mathring{X}(T)^\top:=\begin{bmatrix}
			x(t)^\top-\bar{x}(t)^\top&
			y(t)^\top-\bar{y}(t)^\top&
			z(t)^\top-\bar{z}(t)^\top&
			\kappa(t)^\top-\bar{\kappa}(t)^\top
		\end{bmatrix}.
	\end{equation*}
	From (\ref{state equation}) and the derivation process of (\ref{FSVIE}), the following results are gained:
	\small
	\begin{equation}\label{suffstateeq}
		\mathring{X}(t)=\int_{t_0}^T\bar{\mathbb{A}}(t,s)\big[b\big(s,\Theta^u(s)\big)-b\big(s,\bar{\Theta}(s)\big)\big]ds+\int_{t_0}^T\bar{\mathbb{A}}(t,s)\big[\sigma\big(s,\Theta^u(s)\big)-\sigma\big(s,\bar{\Theta}(s)\big)\big]dW(s),
	\end{equation}
	\normalsize
	where 
	\begin{equation*}
		\bar{\mathbb{A}}(t,s):=\begin{bmatrix}
			I\\
			\mathbb{I}_{(\delta,\infty)}(t-s)\\
			\mathcal{E}_1(t,s)\\
			\mathcal{E}_2(t,s)
		\end{bmatrix}
		=\begin{bmatrix}
			I\\
			\mathbb{I}_{(\delta,\infty)}(t-s)\\
			\mathcal{E}_1(t,s)\\
			0
		\end{bmatrix}+
		\begin{bmatrix}
			0\\	0\\	0\\
			\mathcal{E}_2(t,s)
		\end{bmatrix}
		:=\bar{\mathbb{A}}_1(t,s)+\bar{\mathbb{A}}_2(t,s).
	\end{equation*}
	
	Similar to the treating of (\ref{deal Terminal term}), by combining with (\ref{adjoint equation}\ a), we derive
	\small
	\begin{equation}\begin{aligned}\label{Terminal term BSDE}
			&\mathbb{E}\big[\mathbb{H}\mathring{X}(T)\big]\\
			=&\ \mathbb{E}\int_{t_0}^T\bigg[\Big<\eta^1(t)+\eta^2(t)\mathbb{I}_{[t_0,T-\delta)}(t)+\mathcal{E}_1(T,t)^\top\eta^3(t)+\int_t^T\psi_1(T,s)^\top \mathbb{E}^{\mathcal{F}_s}\big[D_sh_{\kappa}(T)^\top\big] ds,b\big(t,\Theta^u(t)\big)\\
			&\quad-b\big(t,\bar{\Theta}(t)\big)\Big>+\Big<\zeta^1(t)+\zeta^2(t)\mathbb{I}_{[t_0,T-\delta)}(t)+\mathcal{E}_1(T,t)^\top\zeta^3(t)+\int_t^T\mathbb{E}^{\mathcal{F}_t}\big[D_t\big(\psi_1(T,s)^\top\zeta^4(s)\big)\big]ds,\\
			&\qquad\sigma\big(t,\Theta^u(t)\big)-\sigma\big(t,\bar{\Theta}(t)\big)\Big>\bigg]dt+\mathbb{E}\int_{t_0}^T\big<\mathring{X}(t),\psi(t)\big>dt-\mathbb{E}\int_{t_0}^T\mathbb{L}(t)\mathring{X}(t)dt.
	\end{aligned}\end{equation}
	\normalsize
	Recall back (\ref{adjoint equation}\ b), by a series of calculations, we obtain
	\small
	\begin{equation}\begin{aligned}\label{Terminal term BSVIE}
			&\mathbb{E}\int_{t_0}^T\big<\mathring{X}(t),\psi(t)\big>dt\\
			=\ &\mathbb{E}\int_{t_0}^T\bigg[\Big<\int_t^T Y^1(s)ds+\int_{t+\delta}^TY^2(s)ds\mathbb{I}_{[t_0,T-\delta)}(t)+\int_t^T\mathcal{E}_1(s,t)^\top Y^3(s)ds+\int_t^T\int_t^s\psi_1(s,r)^\top\\
			&\quad\times\mathbb{E}^{\mathcal{F}_r}\big[D_rY^4(s)\big]drds,b\big(t,\Theta^u(t)\big)-b\big(t,\bar{\Theta}(t)\big)\Big>+\Big<\int_t^T Z^1(s,t)ds+\int_{t+\delta}^T Z^2(s,t)ds\mathbb{I}_{[t_0,T-\delta)}(t)\\
			&\quad +\int_t^T \mathcal{E}_1(s,t)^\top Z^3(s,t)ds+\int_t^T\int_t^s\mathbb{E}^{\mathcal{F}_t}\big[D_t\big(\psi_1(s,r)^\top Z^4(s,r)\big)\big]drds,\sigma\big(t,\Theta^u(t)\big)-\sigma\big(t,\bar{\Theta}(t)\big)\Big>\bigg]dt\\
			&-\mathbb{E}\int_{t_0}^T\Big<\mathring{X}(t),\int_t^T\bigg\{\mathbb{A}_1(s,t)^\top Y(s)+\mathbb{C}_1(s,t)^\top Z(s,t)+\int_t^s\Big\{\hat{\mathbb{A}}_2(t)^\top \psi_1(s,r)^\top \mathbb{E}^{\mathcal{F}_r}\big[D_rY(s)\big]\\
			&\quad\quad+\hat{\mathbb{C}}_2(t)^\top\mathbb{E}^{\mathcal{F}_t}\Big[D_t\Big(\psi_1(s,r)^\top Z(s,r)\Big)\Big]\Big\}dr\bigg\}ds\Big>dt-\mathbb{E}\int_{t_0}^T\Big<\mathring{X}(t),\mathbb{L}(t)^\top \Big>dt.
	\end{aligned}\end{equation} 
	\normalsize 
	Taking (\ref{Terminal term BSDE}) and (\ref{Terminal term BSVIE}) into (\ref{suffi cost func}) leads to 
	\small
	\begin{equation}\begin{aligned}\label{suffcofunc3}
			&J\big(u(\cdot)\big)-J\big(\bar{u}(\cdot)\big)\\
			\le &\ \mathbb{E}\bigg\{\int_{t_0}^T\Big[\big<\mathcal{H}_u\big(t,\bar{\Theta}(t),p(t),q(t)\big),u(t)-\bar{u}(t)\big>+\big<\mathcal{H}_{\mu}\big(t,\bar{\Theta}(t),p(t),q(t)\big),\mu(t)-\bar{\mu}(t)\big>\\
			&\quad+\big<\mathcal{H}_{\nu}\big(t,\bar{\Theta}(t),p(t),q(t)\big),\nu(t)-\bar{\nu}(t)\big>+\big<\mathcal{H}_{\lambda}\big(t,\bar{\Theta}(t),p(t),q(t)\big),\lambda(t)-\bar{\lambda}(t)\big>\\
			&\quad+\big<\mathcal{H}_x\big(t,\bar{\Theta}(t),p(t),q(t)\big),x(t)-\bar{x}(t)\big>+\big<\mathcal{H}_y\big(t,\bar{\Theta}(t),p(t),q(t)\big),y(t)-\bar{y}(t)\big>\\
			&\quad+\big<\mathcal{H}_z\big(t,\bar{\Theta}(t),p(t),q(t)\big),z(t)-\bar{z}(t)\big>+\big<\mathcal{H}_{\kappa}\big(t,\bar{\Theta}(t),p(t),q(t)\big),\kappa(t)-\bar{\kappa}(t)\big>\Big]dt\bigg\}\\
			&-\mathbb{E}\int_{t_0}^T\Big<\mathring{X}(t),\int_t^T\bigg\{\mathbb{A}_1(s,t)^\top Y(s)+\mathbb{C}_1(s,t)^\top Z(s,t)+\int_t^s\Big\{\hat{\mathbb{A}}_2(t)^\top \psi_1(s,r)^\top \mathbb{E}^{\mathcal{F}_r}\big[D_rY(s)\big]\\
			&\quad\quad+\hat{\mathbb{C}}_2(t)^\top\mathbb{E}^{\mathcal{F}_t}\big[D_t\big(\psi_1(s,r)^\top Z(s,r)\big)\big]\Big\}dr\bigg\}ds\Big>dt-\mathbb{E}\int_{t_0}^T\big<\mathring{X}(t),\mathbb{L}(t)^\top\big>dt.	
	\end{aligned}\end{equation}
	\normalsize 
	Observe (\ref{suffstateeq}) and combine with (\ref{varphiY}) as well as (\ref{dual principle}), which results in 
	\begin{equation}\label{realtionship}
		\mathbb{E}\int_{t_0}^T\big<\mathring{X}(t),\psi(t)\big>dt-\mathbb{E}\int_{t_0}^T\big<\mathring{X}(t),\mathbb{L}(t)^\top\big>dt=0.
	\end{equation}
	Taking (\ref{adjointeqvector}) and (\ref{realtionship}) into (\ref{suffcofunc3}), one has
	\small
	\begin{equation*}\begin{aligned}
			&\mathbb{E}\int_{t_0}^T\Big<\mathring{X}(t),\int_t^T\bigg\{\mathbb{A}_1(s,t)^\top Y(s)+\mathbb{C}_1(s,t)^\top Z(s,t)+\int_t^s\Big\{\hat{\mathbb{A}}_2(t)^\top \psi_1(s,r)^\top \mathbb{E}^{\mathcal{F}_r}\big[D_rY(s)\big]\\
			&\quad\quad+\hat{\mathbb{C}}_2(t)^\top\mathbb{E}^{\mathcal{F}_t}\big[D_t\big(\psi_1(s,r)^\top Z(s,r)\big)\big]\Big\}dr\bigg\}ds\Big>dt+\mathbb{E}\int_{t_0}^T\big<\mathring{X}(t),\psi(t)\big>dt\\
			=&\ \mathbb{E}\int_{t_0}^T\Big[ \big<b_x(t)p(t)+\sigma_x(t)q(t)+l_x(t),x(t)-\bar{x}(t)\big>+\big<b_y(t)p(t)+\sigma_y(t)q(t)+l_y(t),y(t)-\bar{y}(t)\big>\\
			&\quad+\big<b_z(t)p(t)+\sigma_z(t)q(t)+l_z(t),z(t)-\bar{z}(t)\big>+\big<b_{\kappa}(t)p(t)+\sigma_{\kappa}(t)q(t)+l_{\kappa}(t),\kappa(t)-\bar{\kappa}(t)\big>\Big]dt.
	\end{aligned}\end{equation*}
	\normalsize 
	Then, by the maximum condition (\ref{maximum principle}), we obtain
	\begin{equation*}
		J\big(u(\cdot)\big)-J\big(\bar{u}(\cdot)\big)\le 0.
	\end{equation*}
	This proof is completed.
\end{proof}

\begin{Remark}
	Since the appearance of $u(t-\delta),\ \int_{t_0}^t\phi_2(t,s)u(s)ds,\ \int_{t_0}^t\psi_2(t,s)u(s)dW(s)$ in \textbf{Problem (P)}, the maximum principle (\ref{maximum principle}) contains four parts: $\mathbb{E}^{\mathcal{F}_t}\big[\int_t^T\psi_2(s,t)^\top D_t\mathcal{H}^\ast_{\lambda}(s)ds\big]$ characterizes the maximum condition with $\int_{t_0}^t\psi_2(t,s)u(s)dW(s)$, $\mathbb{E}^{\mathcal{F}_t}\big[\int_t^T\phi_2(s,t)^\top\mathcal{H}^\ast_{\nu}(s)ds\big]$ characterizes the maximum condition with $\int_{t_0}^t\phi_2(t,s)u(s)ds$, $\mathbb{E}^{\mathcal{F}_t}\big[\mathcal{H}^\ast_{\mu}(t+\delta)\mathbb{I}_{[t_0,T )}(t)\big]$ characterizes the maximum condition with $u(t-\delta)$, while $\mathcal{H}^\ast_{u}(t)$ characterizes the one without delay which naturally reduces to the SDEs case. Furthermore, when $\phi_2(\cdot,\cdot)\equiv\psi_2(\cdot,\cdot)\equiv0$, the maximum principle condition (\ref{maximum principle}) reduces (13) in \cite{CW10}. In addition, \cite{ZF21} deal with the system containing the term $\int_{-\delta}^0e^{\lambda s}u(s+\delta)ds$, but the presented maximum condition (H1) is similar to the first of (\ref{variationalineq2}) rather than the expression of (\ref{maximum principle}).
	
\end{Remark}

\section{Transform BSVIE into ABSDE}

To facilitate the subsequent analysis, recall back (\ref{defp})-(\ref{defq}) as follows:
\begin{equation}\left\{\begin{aligned}\label{defpq2}
		p(t):&=\eta^1(t)+\eta^2(t)\mathbb{I}_{[t_0,T-\delta)}(t)+\mathcal{E}_1(T,t)^\top\eta^3(t)+\mathbb{E}^{\mathcal{F}_t}\Big[\int_t^T\psi_1(T,r)^\top D_r\eta^4(r)dr\Big]\\
		&\quad+\mathbb{E}^{\mathcal{F}_t}\Big[\int_t^T Y^1(s)ds+\int_{t+\delta}^T Y^2(s)ds\mathbb{I}_{[t_0,T-\delta)}(t)\\
		&\quad+\int_t^T \mathcal{E}_1(s,t)^\top Y^3(s)ds+\int_t^T\int_t^s\psi_1(s,r)^\top D_rY^4(s)drds\Big ],\\
		q(t):&=\zeta^1(t)+\zeta^2(t)\mathbb{I}_{[t_0,T-\delta)}(t)+\mathcal{E}_1(T,t)^\top\zeta^3(t)+\int_t^T\mathbb{E}^{\mathcal{F}_t}\big[D_t\big(\psi_1(T,r)^\top\zeta^4(r)\big)\big]dr\\
		&\quad+\int_t^T Z^1(s,t)ds+\int_{t+\delta}^T Z^2(s,t)ds\mathbb{I}_{[t_0,T-\delta)}(t)\\
		&\quad+\int_t^T \mathcal{E}_1(s,t)^\top Z^3(s,t)ds+\int_t^T\int_t^s\mathbb{E}^{\mathcal{F}_t}\big[D_t\big(\psi_1(s,r)^\top Z^4(s,r)\big)\big]drds.\\
	\end{aligned}\right.\end{equation}

We now construct a link between the adjoint equation (\ref{adjoint equation}) and a class of ABSDEs. 

\begin{mythm}\label{the5.1}
	Suppose that \textbf{(H3.1)} be satisfied. Let $(x^{\ast}(\cdot),u^{\ast}(\cdot))$ be an optimal pair, $(\eta(\cdot),\zeta(\cdot),\\Y(\cdot),Z(\cdot,\cdot))$ is the solution of (\ref{adjoint equation}. Assume further that $\mathcal{E}_1(\cdot,\cdot)\equiv C$ for some constant $C$ and $\psi_1(\cdot,\cdot)$ is a deterministic function. Then, the pair $(p(\cdot),q(\cdot))$ defined by (\ref{defpq2}) satisfies the following system of ABSDEs:    
	\begin{equation}\left\{\begin{aligned}\label{ABSDE}
			p(t)=&\ h_x(T)^\top+\mathcal{E}_1(T,t)^\top h_z(T)^\top+\int_t^T\psi_1(T,s)^\top\mathbb{E}^{\mathcal{F}_s}\big[D_sh_{\kappa}(T)^\top\big]ds\\
			&+\int_t^T\Big\{l_x(s)+b_x(s)^\top p(s)+\sigma_x(s)^\top q(s)+\mathcal{E}_1(s,t)^\top\big[l_z(s)+b_z(s)^\top p(s)\\
			&+\sigma_z(s)^\top q(s) \big]+\int_t^s\psi_1(s,r)^\top D_r\big[l_{\kappa}(s)+b_{\kappa}(s)^\top p(s)+\sigma_{\kappa}(s)^\top q(s)\big]dr  \Big\}ds\\
			&-\int_t^Tq(s)dW(s),\quad  t\in[T-\delta,T],\\         
			p(t)=&\ p(T-\delta)+\mathbb{E}^{\mathcal{F}_{T-\delta}}\big[h_y(T)\big]+\int_t^T\Big\{l_x(s)+b_x(s)^\top p(s)+\sigma_x(s)^\top q(s)\\
			&+\mathcal{E}_1(s,t)^\top\big[l_z(s)+b_z(s)^\top p(s)+\sigma_z(s)^\top q(s) \big]+\int_t^s\psi_1(s,r)^\top D_r\big[l_{\kappa}(s)\\
			&+b_{\kappa}(s)^\top p(s)+\sigma_{\kappa}(s)^\top q(s)\big]dr+\mathbb{E}^{\mathcal{F}_s}\big[l_y(s+\delta)+b_y(s+\delta)^\top p(s+\delta)\\
			&+\sigma_y(s+\delta)^\top q(s+\delta) \big]  \Big\}ds-\int_t^Tq(s)dW(s),\quad t\in[t_0,T-\delta).
		\end{aligned}\right.\end{equation}
\end{mythm}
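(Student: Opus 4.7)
The plan is to verify directly that the pair $(p(\cdot),q(\cdot))$ defined in (\ref{defpq2}) satisfies the integral identities (\ref{ABSDE}). Because the definitions of $p$ and $q$ involve the indicator $\mathbb{I}_{[t_0,T-\delta)}$, I would split the argument at $t=T-\delta$ and prove the first line of (\ref{ABSDE}) on $[T-\delta,T]$ (where the $\eta^2$ and $Y^2$ contributions drop out), and then bootstrap to $[t_0,T-\delta)$ using the established identity as the ``initial data'' at $T-\delta$.

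First I would use the structural hypotheses to reduce the $\eta,\zeta$-content of $p(t)$: from the BSDE (\ref{adjoint equation})(a) together with Proposition \ref{pro2.5}, $\eta^i(t)=\mathbb{E}^{\mathcal{F}_t}[\mathbb{H}_i^\top]$ and $\zeta^i(t)=\mathbb{E}^{\mathcal{F}_t}[D_t\mathbb{H}_i^\top]$; since $\psi_1(\cdot,\cdot)$ is deterministic, Malliavin derivatives commute with $\psi_1$; and the constancy of $\mathcal{E}_1$ removes any $D_t\mathcal{E}_1$ correction. Plugging these identities into (\ref{defpq2}) collapses the $\eta,\zeta$-parts of $p(t)$ exactly onto the terminal data $h_x(T)^\top+\mathcal{E}_1(T,t)^\top h_z(T)^\top+\int_t^T\psi_1(T,s)^\top\mathbb{E}^{\mathcal{F}_s}[D_sh_\kappa(T)^\top]ds$ that appears in the first line of (\ref{ABSDE}).

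Next I would substitute the BSVIE expression (\ref{adjoint equation})(b) for $Y(s)$ into each of the four $Y^i$-integrals making up the remaining part of $p(t)$. Applying classical and stochastic Fubini to swap the outer $\int_t^T\!ds$ with the inner drivers coming from the BSVIE, and then the generalized duality formula (\ref{duality formula}) together with the M-solution identity (\ref{adjoint equation})(c) to rewrite every integrand of the form $D_rY^i(s)$ via $Z^i(s,r)$, I expect the diffusion coefficient emerging from the Clark--Ocone martingale representation of the resulting $\mathcal{F}_t$-conditional expectation to be precisely $q(t)$ of (\ref{defpq2}), while the drift recombines into the integrand of (\ref{ABSDE}): the ``diagonal'' pieces produce $l_x(s)+b_x(s)^\top p(s)+\sigma_x(s)^\top q(s)$, and the kernels $\mathcal{E}_1(s,t)^\top$ and $\int_t^s\psi_1(s,r)^\top D_r(\cdot)dr$ generate the analogous $z$- and $\kappa$-weighted driver terms. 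On $[t_0,T-\delta)$ the extra pieces $\eta^2(t)$ and $\int_{t+\delta}^T Y^2(s)ds$ activate; via the same substitution-and-Fubini calculus, the $\delta$-shift in the latter converts them into the anticipated driver $\mathbb{E}^{\mathcal{F}_s}[l_y(s+\delta)+b_y(s+\delta)^\top p(s+\delta)+\sigma_y(s+\delta)^\top q(s+\delta)]$, and matching at $t=T-\delta$ yields the terminal value $p(T-\delta)+\mathbb{E}^{\mathcal{F}_{T-\delta}}[h_y(T)]$.

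The main obstacle is the double Malliavin integral $\int_t^T\int_t^s\psi_1(s,r)^\top D_rY^4(s)\,dr\,ds$ appearing in $p(t)$. Isolating its martingale component, so that it contributes to $q(t)$ and not to the drift, requires commuting $D_r$ with $\mathbb{E}^{\mathcal{F}_t}$ in a setting where $Y^4$ itself is only given implicitly through the BSVIE; simultaneously, the boundary contributions produced by the lower limits $t$ in both integrals have to be absorbed into the correct $\int_t^s\psi_1(s,r)^\top D_r[\cdots]dr$ term of (\ref{ABSDE}) via the generalized duality formula. It is precisely to keep this decomposition tractable that the hypotheses ``$\mathcal{E}_1\equiv C$ constant'' and ``$\psi_1$ deterministic'' are imposed, since they are what allow $D$ to pass through the kernels and forbid any spurious cross-terms.
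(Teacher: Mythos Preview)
Your outline invokes the right toolkit (stochastic Fubini, Clark--Ocone, the $M$-solution identity (\ref{adjoint equation})(c), Proposition~\ref{pro2.5}) and correctly locates the main obstacle in the double Malliavin integral, so in spirit it matches the paper. The organization, however, is different. You propose to substitute the full BSVIE (\ref{adjoint equation})(b) for each $Y^i(s)$ into the integrals defining $p(t)$ and then Fubini-rearrange; this produces nested integrals of $Y$ and $Z$ that only close once you recognize that the inner combinations reassemble into $p(s),q(s)$. The paper instead isolates that recognition \emph{first}: taking $\mathbb{E}^{\mathcal{F}_t}$ of (\ref{adjoint equation})(b) component-wise and reading off the matrix structure of $\mathbb{A}_1,\mathbb{C}_1,\hat{\mathbb{A}}_2,\hat{\mathbb{C}}_2$ yields the pointwise identities
\[
\mathbb{E}^{\mathcal{F}_t}\big[Y^1(t)+Y^2(t+\delta)\mathbb{I}_{[t_0,T-\delta)}(t)+Y^3(t)\big]
=\sum_{\varrho=x,z}\big[l_\varrho(t)+b_\varrho(t)^\top p(t)+\sigma_\varrho(t)^\top q(t)\big]+\mathbb{E}^{\mathcal{F}_t}[\cdots]_{\varrho=y},
\]
\[
Y^4(t)=l_\kappa(t)+b_\kappa(t)^\top p(t)+\sigma_\kappa(t)^\top q(t),
\]
and only then inserts these into $\int_t^T\mathbb{E}^{\mathcal{F}_s}[\cdot]\,ds$, using the $M$-solution representation to split off the $\int_t^T q(s)\,dW(s)$ part. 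This makes the emergence of the $\mathcal{H}_\varrho$-shaped driver immediate and avoids the recursion your substitution route would have to unwind. One small imprecision in your sketch: the $\eta$-part of $p(t)$ does not collapse directly onto $h_x(T)^\top+\cdots$, since $\eta^1(t)=\mathbb{E}^{\mathcal{F}_t}[h_x(T)^\top]$; the martingale remainder $\int_t^T\zeta^1(s)\,dW(s)$ is precisely what feeds the $\zeta$-portion of $q$ into the stochastic integral.
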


\begin{proof}
	Observing equation (\ref{ABSDE}), it can be written in a unified form as follows:
	\begin{equation}\begin{aligned}
			p(t)=&\ p(T-\delta)+\mathbb{E}^{\mathcal{F}_{T-\delta}}\big[h_y(T)\big]+\mathcal{E}_1(T,t)^\top h_z(T)^\top+\int_t^T\psi_1(T,s)^\top\mathbb{E}^{\mathcal{F}_s}\big[D_sh_{\kappa}(T)^\top\big]ds\\
			&+\int_t^T\Big\{l_x(s)+b_x(s)^\top p(s)+\sigma_x(s)^\top q(s)+\mathcal{E}_1(s,t)^\top\big[l_z(s)+b_z(s)^\top p(s)+\sigma_z(s)^\top\\
			&\times q(s) \big]+\int_t^s\psi_1(s,r)^\top D_r\big[l_{\kappa}(s)+b_{\kappa}(s)^\top p(s)+\sigma_{\kappa}(s)^\top q(s)\big]dr+\mathbb{E}^{\mathcal{F}_s}\big[l_y(s+\delta)\\
			&+b_y(s+\delta)^\top p(s+\delta)+\sigma_y(s+\delta)^\top q(s+\delta) \big]\Big\}ds-\int_t^Tq(s)dW(s).
	\end{aligned}\end{equation}
	Without loss of generality, let $\mathcal{E}_1(\cdot,\cdot)\equiv 1$ for simplicity. Applying conditional expectations to both sides of (\ref{adjoint equation}), noting (\ref{adjointeqvector}), we derive the following results:
	\begin{equation*}\begin{aligned}
			&\mathbb{E}^{\mathcal{F}_t}\big[Y^1(t)+Y^2(t+\delta)\mathbb{I}_{[t_0,T-\delta)}(t)+Y^3(t) \big]\\
			=&\ b_x(t)^\top p(t)+\sigma_x(t)^\top q(t)+l_x(t)+b_z(t)^\top p(t)+\sigma_z(t)^\top q(t)+l_z(t)\\
			&+\mathbb{E}^{\mathcal{F}_t}\big[b_y(t+\delta)^\top p(t+\delta)+\sigma_y(t+\delta)^\top q(t+\delta)+l_y(t+\delta)\big]\mathbb{I}_{[t_0,T-\delta)}(t),
	\end{aligned}\end{equation*}
	and 
	\begin{equation*}
		Y^4(t)=b_{\kappa}(t)^\top p(t)+\sigma_{\kappa}(t)^\top q(t)+l_{\kappa}(t),\quad t_0\le t \le T.
	\end{equation*}
	Since
	\begin{equation*}\begin{aligned}
			&\mathbb{E}^{\mathcal{F}_s}\Big[ \int_t^T\int_t^{s+\delta}Z^2(s+\delta,r)dW(r)\mathbb{I}_{[t_0,T-\delta)}(s)ds \Big]\\
			=&\ \mathbb{E}^{\mathcal{F}_s}\Big[ \int_t^T\int_t^sZ^2(s+\delta,r)dW(r)\mathbb{I}_{[t_0,T-\delta)}(s)ds+\int_t^T\int_s^{s+\delta}Z^2(s+\delta,r)dW(r)\mathbb{I}_{[t_0,T-\delta)}(s)ds\Big]\\
			=&\int_t^T\mathbb{E}^{\mathcal{F}_s}\Big[\int_t^sZ^2(s+\delta,r)dW(r)\mathbb{I}_{[t_0,T-\delta)}(s)\Big]ds\\
			=&\int_t^{T-\delta}\int_t^sZ^2(s+\delta,r)dW(r)ds\mathbb{I}_{[t_0,T-\delta)}(t).
	\end{aligned}\end{equation*}
	Then, by stochastic Fubini's theorem, one has
	\begin{equation*}\begin{aligned}
			&\int_t^T\mathbb{E}^{\mathcal{F}_s}\big[Y^1(s)+Y^2(s+\delta)\mathbb{I}_{[t_0,T-\delta)}(s)+Y^3(s)\big]ds\\
			=&\int_t^T\mathbb{E}^{\mathcal{F}_s}\Big[\mathbb{E}^{\mathcal{F}_t}\big[Y^1(s)\big]+\int_t^sZ^1(s,r)dW(r)+\mathbb{E}^{\mathcal{F}_t}\big[Y^2(s+\delta)\mathbb{I}_{[t_0,T-\delta)}(s)\big]\\
			&\qquad\qquad+\int_t^{s+\delta}Z^2(s,r)\mathbb{I}_{[0,T-\delta)}(s)dW(r)+\mathbb{E}^{\mathcal{F}_t}\big[Y^3(s)\big]+\int_t^sZ^3(s,r)dW(r)\Big]ds\\
			=&\int_t^T\mathbb{E}^{\mathcal{F}_t}\Big[Y^1(s)+Y^2(s+\delta)\mathbb{I}_{[t_0,T-\delta)}(s)+Y^3(s) \Big]ds\\
			&+\int_t^T\Big[\int_s^TZ^1(r,s)dr+\int_s^{T-\delta}Z^2(r+\delta,s)dr\mathbb{I}_{[t_0,T-\delta)}(s)\mathbb{I}_{[t_0,T-\delta)}(t)+\int_s^TZ^3(r,s)dr\Big]dW(s).
	\end{aligned}\end{equation*}
	Next, by Clark-Ocone's formula, the following result holds:
	\begin{equation*}\begin{aligned}
			&\int_t^T\mathbb{E}^{\mathcal{F}_s}\bigg[\int_t^s\mathbb{E}^{\mathcal{F}_r}\big[D_rY^4(s) \big]dr\bigg]ds\\
			=&\int_t^T\mathbb{E}^{\mathcal{F}_s}\bigg[\int_t^s\Big(\mathbb{E}^{\mathcal{F}_t}\big[D_rY^4(s) \big]+\int_t^r\mathbb{E}^{\mathcal{F}_{\theta}}\big[D_{\theta}Z^4(s,r)\big]dW(\theta)\Big)dr\bigg]ds\\
			=&\int_t^T\int_t^s\mathbb{E}^{\mathcal{F}_t}\big[D_rY^4(s)\big]drds+\int_t^T\int_t^s\int_r^s\mathbb{E}^{\mathcal{F}_r}\big[D_rZ^4(s,\theta)\big]d\theta dW(r)ds\\
			=&\int_t^T\int_t^s\mathbb{E}^{\mathcal{F}_t}\big[D_rY^4(s)\big]drds+\int_t^T\int_s^T\int_s^r\mathbb{E}^{\mathcal{F}_s}\big[D_sZ^4(r,\theta)\big]d\theta drdW(s),
	\end{aligned}\end{equation*}
	Recalling (\ref{adjoint equation}) with Proposition \ref{pro2.5}, we gain 
	\begin{equation*}
		\mathbb{E}^{\mathcal{F}_s}\big[D_sh_{\kappa}(T)^\top\big]=\zeta^4(s)=\mathbb{E}^{\mathcal{F}_t}\big[\zeta^4(s)\big]+\int_t^s\mathbb{E}^{\mathcal{F}_r}\big[D_r\zeta^4(s)\big]dW(r).
	\end{equation*}
	Then 
	\small
	\begin{equation*}\begin{aligned}
			&\eta^1(t)+\eta^2(t)\mathbb{I}_{[t_0,T-\delta)}(t)+\eta^3(t)+\int_t^T\psi_1(T,r)^\top \mathbb{E}^{\mathcal{F}_t}\big[D_s\eta^4(s)\big]ds+\int_t^T \mathbb{E}^{\mathcal{F}_t}\Big[Y^1(s)\\
			&+Y^2(s)\mathbb{I}_{[t_0,T-\delta)}(t)\mathbb{I}_{[t_0,T-\delta)}(s)+Y^3(s)+\int_t^s\psi_1(s,r)^\top D_rY^4(s)dr\Big]ds+\int_t^T\bigg[\zeta^1(s)\\
			&+\zeta^2(s)\mathbb{I}_{[t_0,T-\delta)}(s) +\zeta^3(s)+\int_s^T\psi_1(T,r)^\top\mathbb{E}^{\mathcal{F}_s}\big[D_s\zeta^4(r)\big]dr+\int_s^T \Big(Z^1(r,s)\\
			&+Z^2(r+\delta,s)\mathbb{I}_{[t_0,T-\delta)}(r)\mathbb{I}_{[t_0,T-\delta)}(t)+Z^3(r,s)+\int_s^r\psi_1(r,\theta)^\top\mathbb{E}^{\mathcal{F}_s}\big[D_s Z^4(r,\theta)\big]d\theta\Big) dr\bigg]dW(s)\\
			=&\ h(T)^\top+\mathbb{E}^{\mathcal{F}_{T-\delta}}\big[h_y(T)^\top\big]\mathbb{I}_{[t_,T-\delta)}(t)+h_z(T)^\top+\int_t^T\psi_1(T,s)^\top\mathbb{E}^{\mathcal{F}_s}\big[D_sh_{\kappa}(T)^\top\big]ds+\int_t^T\Big\{l_x(s)\\
			&+b_x(s)^\top p(s)+\sigma_x(s)^\top q(s)+l_z(s)+b_z(s)^\top p(s)+\sigma_z(s)^\top q(s)+\int_t^s\psi_1(s,r)^\top D_r\big[l_{\kappa}(s) \\
			&+b_{\kappa}(s)^\top p(s)+\sigma_{\kappa}(s)^\top q(s)\big]dr+\mathbb{E}^{\mathcal{F}_s}\big[l_y(s+\delta)
			+b_y(s+\delta)^\top p(s+\delta)
			+\sigma_y(s+\delta)^\top q(s+\delta) \big]\Big\}ds.		
	\end{aligned}\end{equation*}
	\normalsize
	Thus, the proof is completed.
\end{proof}

\begin{Remark}
	We explain here why we assume that $\mathcal{E}_1(\cdot,\cdot)\equiv C$ and $\psi_1(\cdot,\cdot)$ is a deterministic function in Theorem \ref{the5.1}. In fact, when we try to convert the adjoint equation (\ref{adjoint equation}) from a BSVIE with Malliavin derivatives, into a ABSDE (\ref{ABSDE}), we apply the martingale representation theorem to $Y^3(\cdot)$ to obtain $\int_t^T\mathbb{E}^{\mathcal{F}_s}[\mathcal{E}_1(s,t)^\top Y^3(s)]ds=\int_t^T\mathbb{E}^{\mathcal{F}_t}[\mathcal{E}_1(s,t)^\top Y^3(s)]ds+\int_t^T\int_s^T\mathcal{E}_1(r,t)^\top Z^3(r,s)drdW(s)$, but from the definition (\ref{defq}) of $q(\cdot)$, we needs the term like $\int_t^T\int_s^T\mathcal{E}_1(r,s)^\top Z^3(r,s)drdW(s)$ appears. So we have to assume that $\mathcal{E}_1(\cdot,\cdot)\equiv C$.
In the meanwhile, by Clark-Ocone's formula, $\int_t^T\mathbb{E}^{\mathcal{F}_s}[\int_t^s\psi_1(s,r)^\top D_rY^3(s)dr]ds=\int_t^T\int_t^s\psi_1(s,r)^\top \mathbb{E}^{\mathcal{F}_t}[Y^3(s)]\\drds+\int_t^T\int_t^s\int_t^s\psi_1(s,r)^\top D_rZ(s,\theta)dW(\theta)drds$, but from the definition (\ref{defp}) of $p(\cdot)$, we need the term $\mathbb{E}^{\mathcal{F}_t}[\int_t^T\int_t^s\psi_1(s,r)^\top D_rY^3(s)drds]$ arises, so $\psi_1(\cdot,\cdot)$ is supposed to be a deterministic function.
We wish to relax these assumptions in the future.
\end{Remark}

\begin{Remark}
	(\ref{ABSDE}) establishes a connection between the adjoint equation (\ref{adjoint equation}), which consists of a BSDE with a BSVIE, and an ABSDE, which allows for a more concise representation of (\ref{adjoint equation}). This is different from \cite{CZ26, ZF21}, which needs to introduce multiple ABSDEs as adjoint equations to give the maximum principle or verification theorem.
\end{Remark}

\section{Applications}

\subsection{Nonzero-sum stochastic differential game with extended mixed delays}

As discussed in Section 1, the strategic interactions among heterogeneous investors in the stock market can be modeled as a nonzero-sum game. This motivates us the investigation of nonzero-sum stochastic differential games with extended mixed delays in the control variable and state variable, which is of both practical importance and theoretical significance. In this section, we restrict our attention to the two-player case, the generalization to $n\ge 3$ players follows analogously.

The controlled stochastic system is 
\begin{equation}\hspace{-4mm}\left\{\begin{aligned}\label{state equation NZG}
		dx(t)&=\hat{b}\big(t,x(t),y(t),z(t),\kappa(t),u_1(t),\mu_1(t),\nu_1(t),\\
		&\qquad\quad\lambda_1(t),u_2(t),\mu_2(t),\nu_2(t), \lambda_2(t)\big)dt\\
		&\quad+\hat{\sigma}\big(t,x(t),y(t),z(t),\kappa(t),u_1(t),\mu_1(t),\nu_1(t),\\
		&\qquad\quad\lambda_1(t),u_2(t),\mu_2(t),\nu_2(t), \lambda_2(t)\big)dW(t),\quad t\in[t_0,T],\\
		x(t)&=\hat{\xi}(t),\ u_1(t)=\hat{\varsigma}_1(t),\ u_2(t)=\hat{\varsigma}_2(t),\quad t\in[t_0-\delta,t_0],
	\end{aligned}\right.\end{equation}
with the cost functional
\begin{equation}\begin{aligned}\label{cost NZG}
		J_i\big(u_1(\cdot),u_2(\cdot)\big)=\mathbb{E}\bigg\{\int_{t_0}^{T}l_i\big(&t,x(t),y(t),z(t),\kappa(t),u_1(t),\mu_1(t),\nu_1(t),\lambda_1(t),u_2(t),\mu_2(t),\\
		&\nu_2(t), \lambda_2(t)\big)dt+h_i\big(x(T),y(T),z(T),\kappa(T)\big)\bigg\},\ i=1,2,
\end{aligned}\end{equation}
where $u_1(\cdot)$ is the control process of Player 1 with $u_2(\cdot)$ signifying the control process of Player 2. $\mu_i(\cdot),\nu_i(\cdot),\lambda_i(\cdot)$ are defined similar to $\mu(\cdot),\nu(\cdot),\lambda(\cdot)$ in Section 1 respectively, and just need to replace $u(\cdot), \phi_2(\cdot,\cdot), \psi_2(\cdot,\cdot)$ by $u_i(\cdot), \phi_{2i}(\cdot,\cdot), \psi_{2i}(\cdot,\cdot)$, respectively, $i=1,2$.  Assume that $U_1,U_2\in\mathbb{R}^m$ are two nonempty convex set. Let
\begin{equation}\begin{aligned}\label{admissible control set nonzero}
		\mathcal{U}_i[t_0,T]:=\Big\{u_i(\cdot):[t_0-\delta,T]\to\mathbb{R}^m \big|& u_i(\cdot)~ \text{is a}~
		U_i\text{-valued, square-integrable},\ \mathbb{F}\text{-adapted}\\
		&\text{process and}~u_i(t)=\hat{\varsigma}_i(t),~t\in[t_0-\delta,t_0]\Big\}  	
\end{aligned}\end{equation}
be the admissible control set of Player $i,\ i=1,2$.
Then the nonzero-sum stochastic differential game with extended mixed delays is studied in this section as follows.

\textbf{Problem (NZG).}\ To find a pair of control $(u_1^\ast(\cdot),u_2^\ast(\cdot))\in\mathcal{U}_1[t_0,T]\times\mathcal{U}_2[t_0,T]$ such that (\ref{state equation NZG}) is satisfied and 
\begin{equation}\left\{\begin{aligned}\label{nash equilibrium}     
		J_1\big(u_1(\cdot),u_2^\ast(\cdot) \big)&\le J_1\big(u_1^\ast(\cdot),u_2^\ast(\cdot) \big),\quad \forall u_1(\cdot)\in\mathcal{U}_1[t_0,T],\\ 
		J_2\big(u_1^\ast(\cdot),u_2(\cdot) \big)&\le J_2\big(u_1^\ast(\cdot),u_2^\ast(\cdot) \big),\quad \forall u_2(\cdot)\in\mathcal{U}_2[t_0,T]. 
	\end{aligned}\right.\end{equation}
Any $(u_1^\ast(\cdot),u_2^\ast(\cdot))$ is called a \emph{Nash equilibrium point} of \textbf{Problem (NZG)}, if (\ref{nash equilibrium}) is satisfied.

The following assumptions are imposed on (\ref{state equation NZG}) and (\ref{cost NZG}).

(\textbf{H6.1})
(i) $\hat{b},\hat{\sigma},l_i,h_i$ are continuous differentiable in $(x,y,z,\kappa,u_1,\mu_1,\nu_1,\lambda_1,u_2,\mu_2,\nu_2,\lambda_2)$. They with their derivatives $f_\varrho$ are continuous and uniformly bounded in $(x,y,z,\kappa,u_1,\mu_1,\nu_1,\lambda_1,\\u_2,\mu_2,\nu_2,\lambda_2)$, where $\varrho=x,y,z,\kappa,u_1,\mu_1,\nu_1,\lambda_1,u_2,\mu_2,\nu_2,\lambda_2$, $f=\hat{b},\hat{\sigma},l_i,h_i,\ i=1,2$.

(ii) $|\hat{b}(t,0,0,0,0,u_1,\mu_1,\nu_1,\lambda_1,u_2,\mu_2,\nu_2,\lambda_2)|+|\hat{\sigma}(t,0,0,0,0,u_1,\mu_1,\nu_1,\lambda_1,u_2,\mu_2,\nu_2,\lambda_2)|\\+|l_i(t,0,0,0,0,u_1,\mu_1,\nu_1,\lambda_1,u_2,\mu_2,\nu_2,\lambda_2)|
+|h_i(0,0,0,0)|+|l_{i\varrho}(t,0,0,0,0,u_1,\mu_1,\nu_1,\lambda_1,u_2,\mu_2,\\\nu_2,\lambda_2)|+|h_{i\varrho}(0,0,0,0)|\le C$ for some $C$, where $\varrho=x,y,z,\kappa,u_1,\mu_1,\nu_1,\lambda_1,u_2,\mu_2,\nu_2,\lambda_2,\ i=1,2$.

(iii) The initial trajectory of the state $\hat{\xi}(\cdot)$ is a deterministic continuous function, and the initial trajectory of the control $\hat{\varsigma}_i(\cdot)$ is a deterministic square integrable function. The autoregressive kernel $\phi_{1}(\cdot,\cdot)$ and the moving average kernel $\psi_{1}(\cdot,\cdot)$ belong to $L^{\infty}\big([t_0,T];L_{\mathbb{F}}^{\infty}([t_0,T];\mathbb{R}^{n\times n})\big),$ and $\phi_{2i}(\cdot,\cdot),\psi_{2i}(\cdot,\cdot)\in L^{\infty}\big([t_0,T];L_{\mathbb{F}}^{\infty}\big(\Omega,C( [t_0,T];\mathbb{R}^{n\times n})\big)\big),\ i=1,2$.

(iv) There exists a constant $C$ such that
$\big|f(t,x,y,z,\kappa,u_1,\mu_1,\nu_1,\lambda_1,u_2,\mu_2,\nu_2,\lambda_2)\big|\le C\big(|x|\\+|y|+|z|+|\kappa|+|u_1|+|\mu_1|+|\nu_1|+|\lambda_1|+|u_2|+|\mu_2|+|\nu_2|+|\lambda_2|\big)$ for any $x,y,z,\kappa, u_1,\mu_1,\nu_1,\lambda_1,u_2,\mu_2,\\\nu_2,\lambda_2$, where $f=\hat{b},\hat{\sigma},l_i,h_i,\ i=1,2$.

(v) $\hat{b},\hat{\sigma},l_i$ are $\mathbb{F}$-progressively measurable for any $x,y,z,\kappa,u_1,\mu_1,\nu_1,\lambda_1,u_2,\mu_2,\nu_2,\lambda_2$, and $h_i$ is $\mathcal{F}_T$-measurable for any $x,y,z,\kappa,\ i=1,2$.

Under (\textbf{H6.1}), the SDDE (\ref{state equation NZG}) admits unique solution by Proposition \ref{themSSDE}. In addition, the cost functional (\ref{cost NZG}) is well-defined and \textbf{Problem (NZG)} is meaningful.

Assume that $(u_1^\ast(\cdot),u_2^\ast(\cdot))$ is a Nash equilibrium point, let $\hat{x}^\ast(\cdot)$ represents the corresponding state trajectory. Combining with the convexity of control domain, define the variational control 
\begin{equation*}
	u_i^\rho(\cdot):=u_i^\ast(\cdot)+\rho v_i(\cdot),\ 0\le\rho\le1,
\end{equation*}
where $v_i(\cdot)\in\mathcal{U}_i[t_0,T]$ such that $v_i(\cdot)+u_i^\ast(\cdot)\in\mathcal{U}_i[t_0,T],\ i=1,2$, and the corresponding state trajectory denoted as $\hat{x}^\rho(\cdot)$. Similar to the treatment of stochastic maximum principle in section 3. The variational equation is introduced as the following 
\begin{equation}\left\{\begin{aligned}\label{variational equation NZG}
		d\hat{x}_i(t)&=\Big[\hat{b}_x(t)\hat{x}_i(t)+\hat{b}_y(t)\hat{y}_i(t)+\hat{b}_z(t)\hat{z}_i(t)+\hat{b}_{\kappa}(t)\hat{\kappa}_i(t)\\
		&\qquad+\hat{b}_{u_i}(t)v_i(t)+\hat{b}_{\mu_i}(t)v_i(t-\delta)+\hat{b}_{\nu_i}(t)\int_{t_0}^t\phi_{2i}(t,s)v_i(s)ds\\
		&\qquad+\hat{b}_{\lambda_i}(t)\int_{t_0}^t\psi_{2i}(t,s)v_i(s)dW(s)\Big]dt\\
		&\quad+\Big[\hat{\sigma}_x(t)\hat{x}_i(t)+\hat{\sigma}_y(t)\hat{y}_i(t)+\hat{\sigma}_z(t)\hat{z}_i(t)+\hat{\sigma}_{\kappa}(t)\hat{\kappa}_i(t)\\
		&\qquad+\hat{\sigma}_{u_i}(t)v_i(t)+\hat{\sigma}_{\mu_i}(t)v_i(t-\delta)+\hat{\sigma}_{\nu_i}(t)\int_{t_0}^t\phi_{2i}(t,s)v_i(s)ds\\
		&\qquad+\hat{\sigma}_{\lambda_i}(t)\int_{t_0}^t\psi_{2i}(t,s)v_i(s)dW(s)\Big]dW(t),\quad t\in[t_0,T],\\
		\hat{x}(t)&=0,\ u_i(t)=\hat{\varsigma}_i(t),\quad t\in[t_0-\delta,t_0],\ i=1,2,
	\end{aligned}\right.\end{equation}
and the variational inequalities are
\begin{equation}\begin{aligned}\label{variational inequality NZG}
		\mathbb{E}\bigg\{&\int_{t_0}^T\Big[l_{ix}(t)\hat{x}_i(t)+l_{iy}(t)\hat{y}_i(t)+l_{iz}(t)\hat{z}_i(t)+l_{i\kappa}(t)\hat{\kappa}_i(t)+l_{iu}(t)v_i(t)+l_{i\mu}(t)v_i(t-\delta)\\
		&\qquad+l_{i\nu}(t)\int_{t_0}^t\phi_{2i}(t,s)v_i(s)ds+l_{i\lambda}(t)\int_{t_0}^t\psi_{2i}(t,s)v_i(s)dW(s) \Big]dt\\
		&\qquad+h_{ix}(T)\hat{x}_i(T)+h_{iy}(T)\hat{y}_i(T)+h_{iz}(T)\hat{z}_i(T)+h_{i\kappa}(T)\hat{\kappa}_i(T) \bigg\}\le0,\ i=1,2.
\end{aligned}\end{equation}
We affirm that the SDDE (\ref{variational equation NZG}) admits a unique solution under \textbf{(H6.1)} by Proposition \ref{themSSDE}.

Under \textbf{(H6.1)},  we can gain $\lim_{\rho\to 0}\mathbb{E}\big[\sup_{t\in[t_0,T]}|\check{X}_i(t)|^2\big]=0$, where $\check{X}_i(t):=\frac{\hat{x}^{\rho}(t)-\hat{x}^{\ast}(t)}{\rho}-\hat{x}_i(t),\ t\in[t_0,T],\ i=1,2$. This proof is similar to Lemma \ref{lem3.1}, thus, we omit it here.

Define $X_i(t)^\top:=\begin{bmatrix}
	\hat{x}_i(t)^\top & \hat{y}_i(t)^\top & \hat{z}_i(t)^\top & \hat{\kappa}_i(t)^\top
\end{bmatrix},\ i=1,2$, then we introduce the following adjoint equation 
\begin{equation}\left\{\begin{aligned}\label{adjoint equation NZG}
		(a)\ \eta_i(t)&=\mathbb{H}_i^\top-\int_t^T\zeta_i(s)dW(s),\quad t\in[t_0,T],\\
		(b)\ Y_i(t)&=\mathbb{L}_i(t)^\top+\bar{\mathbb{A}}_1(T,t)^\top\mathbb{H}_i ^\top+\bar{\mathbb{C}}_1(T,t)^\top \zeta_i(t)+\int_t^T\Big\{\check{\mathbb{A}}_2(t)^\top\psi_{1}(T,s)^\top\\
		&\quad\times\mathbb{E}^{\mathcal{F}_s}\big[D_s\mathbb{H}_i^\top\big]+\check{\mathbb{C}}_2(t)^\top \mathbb{E}^{\mathcal{F}_t}\big[D_t\big(\psi_{1}(T,s)^\top\zeta_i(s)\big)\big]\Big\}ds\\
		&\quad+\int_t^T\bigg\{\bar{\mathbb{A}}_1(s,t)^\top Y_i(s)+\bar{\mathbb{C}}_1(s,t)^\top Z_i(s,t)+\int_t^s\Big\{\check{\mathbb{A}}_2(t)^\top \psi_{1}(s,r)^\top\\
		&\quad\times\mathbb{E}^{\mathcal{F}_r}\big[D_rY_i(s)\big]+\check{\mathbb{C}}_2(t)^\top \mathbb{E}^{\mathcal{F}_t}\big[D_t\big(\psi_{1}(s,r)^\top Z_i(s,r)\big)\big]\Big\}dr\bigg\}ds\\
		&\quad-\int_t^TZ_i(t,s)dW(s),\quad t\in[t_0,T],\\
		(c)\ Y_i(t)&=\mathbb{E}^{\mathcal{F}_{t_0}}\big[Y(t)\big]+\int_{t_0}^tZ_i(t,s)dW(s),\quad t\in[t_0,T],\ i=1,2.
	\end{aligned}\right.\end{equation}
\normalsize 
where $\mathbb{H}_i,\mathbb{L}_i, \bar{\mathbb{A}}_1, \bar{\mathbb{C}}_1, \check{\mathbb{A}}_2, \check{\mathbb{C}}_2,\ i=1,2$ are defined similar to $\mathbb{H}, \mathbb{L}, \mathbb{A}_1, \mathbb{C}_1, \hat{\mathbb{A}}_2, \hat{\mathbb{C}}_2$ in (\ref{coefficient1}) and (\ref{cost coefficient3}) respectively, and just need to replace $b, \sigma, l, h$ by $\hat{b}, \hat{\sigma}, l_i, h_i,\ i=1,2$, respectively. In addition, we assume that the above equation (\ref{adjoint equation NZG}) admits unique solution $\big(\eta_i(\cdot),\zeta_i(\cdot),Y_i(\cdot),Z_i(\cdot,\cdot)\big),\ i=1,2$ as (\ref{adjoint equation}).  Furthermore, for $i=1,2$, let
\begin{equation}\left\{\begin{aligned}\label{defpqNZG}
		p_i(t):&=\eta_i^1(t)+\eta_i^2(t)\mathbb{I}_{[t_0,T-\delta)}(t)+\mathcal{E}_{1}(T,t)^\top\eta_i^3(t)+\mathbb{E}^{\mathcal{F}_t}\Big[\int_t^T\psi_{1}(T,r)^\top D_r\eta_i^4(r)dr\Big]\\
		&\quad+\mathbb{E}^{\mathcal{F}_t}\Big[\int_t^T Y_i^1(s)ds+\int_{t+\delta}^T Y_i^2(s)ds\mathbb{I}_{[t_0,T-\delta)}(t)+\int_t^T \mathcal{E}_{1}(s,t)^\top Y_i^3(s)ds\\
		&\quad+\int_t^T\int_t^s\psi_{1}(s,r)^\top D_rY_i^4(s)drds\Big],\\
		q(t):&=\zeta_i^1(t)+\zeta_i^2(t)\mathbb{I}_{[t_0,T-\delta)}(t)+\mathcal{E}_{1}(T,t)^\top\zeta_i^3(t)+\int_t^T\mathbb{E}^{\mathcal{F}_t}\big[D_t\big(\psi_{1}(T,r)^\top\zeta_i^4(r)\big)\big]dr\\
		&\quad+\int_t^T Z_i^1(s,t)ds+\int_{t+\delta}^T Z_i^2(s,t)ds\mathbb{I}_{[t_0,T-\delta)}(t)+\int_t^T \mathcal{E}_{1}(s,t)^\top Z_i^3(s,t)ds\\
		&\quad+\int_t^T\int_t^s\mathbb{E}^{\mathcal{F}_t}\big[D_t\big(\psi_{1}(s,r)^\top Z_i^4(s,r)\big)\big]drds.\\
	\end{aligned}\right.\end{equation}

Then, we define the Hamiltonian function as the following 
\begin{equation}\begin{aligned}
		&\mathcal{H}_i(t,x,y,z,\kappa,p_i,q_i,u_1,\mu_1,\nu_1,\lambda_1,u_2,\mu_2,\nu_2,\lambda_2)\\
		:=&\ l_i(t,x,y,z,\kappa,u_1,\mu_1,\nu_1,\lambda_1,u_2,\mu_2,\nu_2,\lambda_2)\\
		&+\big<\hat{b}(t,x,y,z,\kappa,u_1,\mu_1,\nu_1,\lambda_1,u_2,\mu_2,\nu_2,\lambda_2), p_i\big>\\
		&+\big<\hat{\sigma}(t,x,y,z,\kappa,u_1,\mu_1,\nu_1,\lambda_1,u_2,\mu_2,\nu_2,\lambda_2),q_i\big>. 	
\end{aligned}\end{equation}

\begin{mythm}\label{SPM-NZG}
	(Stochastic maximum principle) Let \textbf{(H6.1)} be satisfied. Suppose $(u_1^\ast(\cdot) ,u_2^\ast(\cdot))$ is a Nash equilibrium point to \textbf{Problem (NZG)} and $\hat{x}^\ast(\cdot)$ represents the corresponding optimal trajectory, $(\eta_i(\cdot),\zeta_i(\cdot),Y_i(\cdot),Z_i(\cdot,\cdot))$ is a solution to (\ref{adjoint equation NZG}), $(p_i(\cdot),q_i(\cdot))$ are defined by (\ref{defpqNZG}). Then, 
	\small 
	\begin{equation}\begin{aligned}\label{maximum principle NZG}
			&\bigg<\mathcal{H}^\ast_{u_i}(t)+\mathbb{E}^{\mathcal{F}_t}\Big[\mathcal{H}^\ast_{\mu_i}(t+\delta)\mathbb{I}_{[t_0,T-\delta)}(t)+\int_t^T\phi_{2i}(s,t)^\top \mathcal{H}^\ast_{\nu_i}(s)ds\\
			&\qquad +\int_t^T\psi_{2i}(s,t)^\top D_t\mathcal{H}^\ast_{\lambda_i}(s)ds \Big],u_i-u_i^{\ast}(t)\bigg>\le 0,\quad \forall u_i\in U_i,\ a.e.\ t\in[t_0,T],\ \mathbb{P}\mbox{-}a.s.,\ i=1,2.
	\end{aligned}\end{equation}
\end{mythm}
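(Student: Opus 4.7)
The plan is to reduce \textbf{Problem (NZG)} to two coupled instances of \textbf{Problem (P)} by exploiting the definition of a Nash equilibrium point: if $(u_1^\ast(\cdot),u_2^\ast(\cdot))$ satisfies (\ref{nash equilibrium}), then for each fixed index $i\in\{1,2\}$, the control $u_i^\ast(\cdot)$ maximizes $J_i(u_1(\cdot),u_2(\cdot))$ over $\mathcal{U}_i[t_0,T]$ while the other player's control is frozen at $u_{3-i}^\ast(\cdot)$. Consequently I can apply the single-player machinery developed in Sections 3--4 separately to Player~1 and Player~2, with the opponent's optimal control absorbed into the coefficients.

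More concretely, for each $i=1,2$, I would first perform the convex perturbation $u_i^\rho(\cdot)=u_i^\ast(\cdot)+\rho v_i(\cdot)$ with $u_{3-i}(\cdot)\equiv u_{3-i}^\ast(\cdot)$, and derive the associated variational SDDE (\ref{variational equation NZG}). The analogue of Lemma \ref{lem3.1} (i.e. $L^2$-convergence of the rescaled difference $\check{X}_i(\cdot)$ to $\hat{x}_i(\cdot)$) is obtained by exactly the same B-D-G and Gronwall argument under \textbf{(H6.1)}; in fact only the cross-terms involving $v_i$ are nonzero because perturbing a single player leaves $u_{3-i}^\ast(\cdot)$ fixed. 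By Fubini/stochastic Fubini I then convert (\ref{variational equation NZG}) into an SVIE in the vector $X_i(\cdot)=(\hat{x}_i,\hat{y}_i,\hat{z}_i,\hat{\kappa}_i)^\top$, identical in form to (\ref{FSVIE}) with the coefficient matrices $\bar{\mathbb{A}}_1,\bar{\mathbb{A}}_2,\bar{\mathbb{C}}_1,\bar{\mathbb{C}}_2,\check{\mathbb{A}}_2,\check{\mathbb{C}}_2$ built from the derivatives of $\hat{b},\hat{\sigma}$ evaluated along the Nash trajectory.

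The next step is to pair the SVIE for $X_i(\cdot)$ with the player-$i$ adjoint system (\ref{adjoint equation NZG}) via the generalized duality principle and the coefficient-decomposition technique. The key identities (\ref{Malliavin on A2})--(\ref{Malliavin on HD2}) carry over verbatim after the substitutions $(b,\sigma,l,h,\mathbb{H},\mathbb{L})\leftrightarrow(\hat b,\hat\sigma,l_i,h_i,\mathbb{H}_i,\mathbb{L}_i)$, because the extended noisy memory structure $\psi_1(t,s)$ of the state is shared by both players and does not depend on the index $i$. This yields the player-$i$ analogue of the variational inequality (\ref{variational inequality-2}), which, after introducing the aggregated dual processes $(p_i(\cdot),q_i(\cdot))$ of (\ref{defpqNZG}) and the Hamiltonian $\mathcal{H}_i$, takes the compact form
\begin{equation*}
\mathbb{E}\!\int_{t_0}^{T}\!\Bigl\langle \mathcal{H}^\ast_{iu_i}(t)v_i(t)+\mathcal{H}^\ast_{i\mu_i}(t)v_i(t-\delta)+\mathcal{H}^\ast_{i\nu_i}(t)\!\int_{t_0}^{t}\!\phi_{2i}(t,s)v_i(s)ds+\mathcal{H}^\ast_{i\lambda_i}(t)\!\int_{t_0}^{t}\!\psi_{2i}(t,s)v_i(s)dW(s)\Bigr\rangle dt\le 0.
\end{equation*}

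Finally, applying Fubini's theorem, the generalized duality formula (\ref{duality formula}), the tower property of conditional expectation on the $\mu_i$-term, and the localization argument around an arbitrary $A\in\mathcal{F}_t$ used at the end of the proof of Theorem \ref{SPM}, I recover (\ref{maximum principle NZG}) for each $i$. The main obstacle is essentially notational rather than analytical: one must carefully track the dependence of $\hat b,\hat\sigma$ on both $(u_1,\mu_1,\nu_1,\lambda_1)$ and $(u_2,\mu_2,\nu_2,\lambda_2)$ and verify that the decoupling of the variational system into two independent SVIEs (one per player) is legitimate, which follows from the convexity of $U_1\times U_2$ and the fact that only one component is perturbed at a time. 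Because the existence/uniqueness of an M-solution to the BSVIE part of (\ref{adjoint equation NZG}) is already postulated (mirroring the assumption preceding Theorem \ref{SPM}), no new well-posedness issue arises, and the proof reduces to two symmetric applications of the argument already developed for \textbf{Problem (P)}.
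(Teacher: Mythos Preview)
Your proposal is correct and follows exactly the approach the paper indicates: the paper states that the result of Theorem~\ref{SPM-NZG} ``can be obtained by using the proof method of Theorem~\ref{SPM}'' and omits the details, so your reduction of \textbf{Problem (NZG)} to two single-player instances of \textbf{Problem (P)} by freezing $u_{3-i}^\ast(\cdot)$ and repeating the Section~3--4 argument for each $i$ is precisely what is intended.
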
	
\normalsize 

(\textbf{H6.2})
The Hamiltonian function $\mathcal{H}_i$ is concave in $(x,y,z,\kappa,u_1,\mu_1,\nu_1,\lambda_1,u_2,\mu_2,\nu_2,\lambda_2)$ and the terminal cost $h_i(x,y,z,\kappa)$ is concave in $(x,y,z,\kappa)$, for given $p_i(\cdot),q_i(\cdot),\ i=1,2$.

\begin{mythm}\label{sufficient condition NZG}
	Assume that $(\bar{u}_1(\cdot),\bar{u}_2(\cdot))\in\mathcal{U}_1[t_0,T]\times\mathcal{U}_2[t_0,T]$, $\bar{x}(\cdot)$ denotes the corresponding trajectory, $(p_i(\cdot),q_i(\cdot))$ defined by (\ref{defpqNZG}), where $(\eta_i(\cdot),\zeta_i(\cdot),Y_i(\cdot),Z_i(\cdot,\cdot)),\ i=1,2$ is a solution to (\ref{adjoint equation NZG}). Suppose that (\ref{maximum principle NZG}) is satisfied under the assumptions \textbf{(H6.1)} and \textbf{(H6.2)}, then $(\bar{u}_1(\cdot),\bar{u}_2(\cdot))$ is a Nash equilibrium point of \textbf{Problem (NZG)}.   
\end{mythm}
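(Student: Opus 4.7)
The plan is to reduce the Nash equilibrium verification to two single-player verification problems, one for each player, and then mimic almost verbatim the argument used in the proof of Theorem \ref{sufficient conition}. For $i=1$, I would fix Player 2's control at $\bar{u}_2(\cdot)$ and view the resulting problem as an optimal control problem of type \textbf{Problem (P)} for Player 1 over $\mathcal{U}_1[t_0,T]$, with state dynamics (\ref{state equation NZG}) and cost $J_1(\cdot,\bar{u}_2(\cdot))$. Symmetrically for $i=2$. With $\bar{u}_{-i}$ frozen, the coefficients $\hat{b},\hat{\sigma},l_i,h_i$ become functions only of $(x,y,z,\kappa,u_i,\mu_i,\nu_i,\lambda_i)$ (the dependence on $(u_{-i},\mu_{-i},\nu_{-i},\lambda_{-i})$ is deterministic in the sense of being fixed along $\bar{u}_{-i}$), so all the structural ingredients needed in Section~4 are available.

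For each $i$, first I would write
\begin{equation*}
J_i\big(u_i(\cdot),\bar{u}_{-i}(\cdot)\big)-J_i\big(\bar{u}_1(\cdot),\bar{u}_2(\cdot)\big)
\end{equation*}
as an expectation involving the difference of Hamiltonians $\mathcal{H}_i$ along the two trajectories minus the compensating drift and diffusion terms paired with $(p_i(\cdot),q_i(\cdot))$, exactly as in (\ref{suffi cost func}). Second, using the concavity assumption (\textbf{H6.2}), the difference is bounded above by a linear expression in $x-\bar{x},y-\bar{y},z-\bar{z},\kappa-\bar{\kappa},u_i-\bar{u}_i,\mu_i-\bar{\mu}_i,\nu_i-\bar{\nu}_i,\lambda_i-\bar{\lambda}_i$ (the partial derivatives with respect to $u_{-i},\mu_{-i},\nu_{-i},\lambda_{-i}$ are multiplied by zero since $\bar{u}_{-i}$ is held fixed). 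Third, I would write the state-difference vector $\mathring{X}_i(T)$ as an SVIE analogous to (\ref{suffstateeq}), using the decomposition $\bar{\mathbb{A}}=\bar{\mathbb{A}}_1+\bar{\mathbb{A}}_2$ that isolates the moving-average kernel contribution. Fourth, the duality computations (\ref{Malliavin on A2})--(\ref{Malliavin on C2}) and (\ref{Malliavin on HA2})--(\ref{Malliavin on HC2}), applied to the adjoint $(\eta_i,\zeta_i,Y_i,Z_i)$ solving (\ref{adjoint equation NZG}), convert the terminal and state-linear terms into integrals over $[t_0,T]$ involving the definitions (\ref{defpqNZG}) of $p_i,q_i$; this reproduces the identity (\ref{realtionship}) in the two-player setting. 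Finally, the remaining integrand collapses to the left-hand side of (\ref{maximum principle NZG}) paired with $u_i(t)-\bar{u}_i(t)$, so the maximum condition gives $J_i(u_i(\cdot),\bar{u}_{-i}(\cdot))\le J_i(\bar{u}_1(\cdot),\bar{u}_2(\cdot))$.

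Running this argument twice, once for $i=1$ and once for $i=2$, yields both inequalities in (\ref{nash equilibrium}), which proves that $(\bar{u}_1(\cdot),\bar{u}_2(\cdot))$ is a Nash equilibrium.

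The main obstacle I anticipate is purely bookkeeping: verifying that the duality manipulations in the proof of Theorem \ref{sufficient conition} transfer term-by-term when the adjoint system (\ref{adjoint equation NZG}) uses the NZG coefficients $\bar{\mathbb{A}}_1,\bar{\mathbb{C}}_1,\check{\mathbb{A}}_2,\check{\mathbb{C}}_2,\mathbb{H}_i,\mathbb{L}_i$ in place of the Problem (P) ones, and that freezing $\bar{u}_{-i}$ indeed kills all cross-partials with respect to the opponent's control in the concavity inequality. No new estimate is required beyond the existence/uniqueness hypothesis already imposed on (\ref{adjoint equation NZG}) and the maximum condition (\ref{maximum principle NZG}); the essential work has been absorbed into the single-player verification theorem.
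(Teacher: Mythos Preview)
Your proposal is correct and matches the paper's own treatment: the paper simply states that Theorem \ref{sufficient condition NZG} is obtained by the proof method of Theorem \ref{sufficient conition} and omits the details, which is precisely your plan of freezing $\bar{u}_{-i}$ to reduce to two single-player verification problems and rerunning the concavity-plus-duality argument of Section 4.2 for each $i$. The bookkeeping concern you flag is the only content, and it is routine since the NZG adjoint coefficients $\bar{\mathbb{A}}_1,\bar{\mathbb{C}}_1,\check{\mathbb{A}}_2,\check{\mathbb{C}}_2,\mathbb{H}_i,\mathbb{L}_i$ are defined by direct substitution of $\hat{b},\hat{\sigma},l_i,h_i$ into the Problem (P) templates.
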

The result of Theorem \ref{SPM-NZG} (resp., Theorem \ref{sufficient condition NZG}) can be obtained by using the proof method of Theorem \ref{SPM} (resp., Theorem \ref{sufficient conition}), so we omit it here.

\subsection{A solvable LQ example}

In this section, we present an LQ example, to illustrate the stochastic maximum principle obtained in the previous section. For simplicity, we consider $n=1$.

The linear controlled stochastic system with extended mixed delay is considered as follows 
\begin{equation}\left\{\begin{aligned}\label{state equation LQ}
		dx(t)=&\big[f(t)u(t)+g(t)\mu(t)+h(t)\nu(t)+k(t)\lambda(t)\big]dt\\
		&+\big[\bar{a}(t)x(t)+\bar{b}(t)y(t)+\bar{c}(t)z(t)+\bar{d}(t)\kappa(t)+\bar{f}(t)u(t)\\
		&~\quad+\bar{g}(t)\mu(t)+\bar{h}(t)\nu(t)+\bar{k}(t)\lambda(t)\big]dW(t),\quad t\in[t_0,T],\\
		x(t)=&\ \xi(t), u(t)=\varsigma(t),\quad t\in[t_0-\delta,t_0],
	\end{aligned}\right.\end{equation}
with the following quadratic cost functional 
\begin{equation}\label{cost functional LQ}
	J\big(u(\cdot)\big)=\frac{1}{2}\mathbb{E}\bigg\{\int_{t_0}^T\Big[r_1(t)|u(t)|^2+r_2(t)|u(t-\delta)|^2\Big]dt+x(T)\bigg\},
\end{equation}
where $f(\cdot),g(\cdot),h(\cdot),k(\cdot),\bar{a}(\cdot),\bar{b}(\cdot),\bar{c}(\cdot),\bar{d}(\cdot),\bar{f}(\cdot),\bar{g}(\cdot),\bar{h}(\cdot),\bar{k}(\cdot)\in L_{\mathbb{F}}^{\infty}(\Omega,[t_0,T];\mathbb{R})$, $r_1(\cdot),r_2(\cdot)\in L^{\infty}(t_0,T;\mathbb{R}^+)$, $\phi_2(\cdot,\cdot),\psi_2(\cdot,\cdot)\in L^{\infty}\big([t_0,T];L^{\infty}\big(C([t_0,T];\mathbb{R})\big)\big)$.

Let $\mathcal{U}_{ad}$ be the admissible control set defined in (\ref{admissible control set}), with $m=1$ and $U=\mathbb{R}$ being a convex set. We seek an optimal control $u(\cdot)\in\mathcal{U}_{ad}$ that minimizes the cost functional (\ref{cost functional LQ}) subject to the state equation (\ref{state equation LQ}).

Now, the unified form of the adjoint equation (\ref{ABSDE}) now becomes
\begin{equation}\left\{\begin{aligned}
		p(t)=&\ p(T-\delta)+\int_t^T\Big\{\bar{a}(s)q(s)+\mathcal{E}_1(s,t)\bar{c}(s)q(s)+\int_s^t\psi_1(s,r)D_r\big[\bar{d}(s)q(s)\big]\\
		&+\mathbb{E}^{\mathcal{F}_s}\big[\bar{b}(s+\delta)q(s+\delta) \big]\Big\}ds-\int_t^Tq(s)dW(s),\quad t\in[t_0,T],\\
		p(T)=&\ 1,\ p(t)=0,\ t\in(T,T+\delta],\ q(t)=0,\quad t\in[T,T+\delta],  
	\end{aligned}\right.\end{equation}
which admits a unique $\mathcal{F}_t$-adapted solution $\big(p(\cdot),q(\cdot)\big)$. It is obviously for $t\in[0,T+\delta]$,
\begin{equation}
	p(t)\equiv1,\ q(t)\equiv0
\end{equation}
is the unique solution. In addition, the Hamilton function (\ref{hamilton}) is now derived as
\begin{equation}\begin{aligned}
		&\mathcal{H}(t,x,y,z,\kappa,p,q,u,\mu,\nu,\lambda)\\
		=&\ \big[\bar{a}(t)x+\bar{b}(t)y+\bar{c}(t)z+\bar{d}(t)\kappa\big]q+\frac{1}{2}\big[r_1(t)|u|^2+r_2(t)|\mu|^2\big]+\big[f(t)p+\bar{f}(t)q\big]u\\
		&+\big[g(t)p+\bar{g}(t)q\big]\mu+\big[h(t)p+\bar{h}(t)q\big]\nu+\big[k(t)p+\bar{k}(t)q\big]\lambda.
\end{aligned}\end{equation}
According to Theorem \ref{SPM}, one has 
\begin{equation}\begin{aligned}\label{LQSMPcondition}
		&\max_{u\in\mathcal{U}_{ad}}\Big<r_1(t)u^\ast(t)+f(t)+\mathbb{E}^{\mathcal{F}_t}\Big\{\big[r_2(t+\delta)u^\ast(t)+g(t+\delta)\big]\mathbb{I}_{[t_0,T-\delta)}(t)\\
		&\qquad\quad +\int_t^T\phi_2(s,t)h(s)ds+\int_t^T\psi_2(s,t)D_tk(s)ds\Big\},u-u^\ast(t)\Big>\\
		=&\ \Big<r_1(t)u^\ast(t)+f(t)+\mathbb{E}^{\mathcal{F}_t}\Big\{\big[r_2(t+\delta)u^\ast(t)+g(t+\delta)\big]\mathbb{I}_{[t_0,T-\delta)}(t)\\
		&\qquad\quad +\int_t^T\phi_2(s,t)h(s)ds+\int_t^T\psi_2(s,t)D_tk(s)ds\Big\},u^\ast(t)\Big>\\
		&\qquad\qquad\qquad\qquad\qquad\qquad\qquad\forall u\in U,\ a.e.t\in[t_0,T],\ \mathbb{P}\mbox{-}a.s., 	
\end{aligned}\end{equation}
where $(u^\ast(\cdot),x^\ast(\cdot))$ is the optimal pair. From (\ref{LQSMPcondition}), it is easy to gain that the optimal control is the form as
\begin{equation}\begin{aligned}\label{optimal control LQ}
		u^\ast(t)=-\frac{f(t)+\mathbb{E}^{\mathcal{F}_t}\Big[g(t+\delta)\mathbb{I}_{[t_0,T-\delta)}(t)+\int_t^T\phi_2(s,t)h(s)ds+\int_t^T\psi_2(s,t)D_tk(s)ds \Big]}{2\big[r_1(t)+r_2(t+\delta)\big]},&\\
		a.e.\ t\in[t_0,T].&
\end{aligned}\end{equation}
Especially, if we let $f(\cdot)\equiv g(\cdot)\equiv \phi(\cdot,\cdot)\equiv h(\cdot)\equiv k(\cdot)\equiv2$, $r_1(\cdot)\equiv r_1(\cdot)\equiv1$, then from (\ref{optimal control LQ}) we know that $u^\ast(t)=T-t+1,\ a.e.\ t\in[t_0,T]$. Noting $U=\mathbb{R}$ is convex, this optimal control can  be adopted.

\section{Concluding remarks}

In this paper, we have studied a class of stochastic optimal control problems with extended mixed delays, where include point delay, extended distributed delay and extended noisy memory, and the control domain is a convex set. It is worth mentioning that the extended noisy memory of control is proposed for the first time in this paper, and the extended mixed delay of state variables and control variables not only enters the drift term and diffusion term of the state equation, but also are component of the running cost and terminal term.

With the help of stochastic Fubini's theorem, the variational equation with extended mixed delay is successfully transformed into an SVIE without delay. Furthermore, a class of BSVIEs with Malliavin derivatives is introduced as the adjoint equation by using coefficient decomposition and generalized duality principle, and the maximum principle and verification theorem are established. By exploiting Clark-Ocone's formula, the adjoint equation is reformulated as a class of ABSDEs containing Malliavin derivatives. As applications, a class of nonzero-sum stochastic differential games with extended mixed delay and a class of LQ problem are studied. 

However, in this paper, we only assume that the solution of BSVIE (\ref{adjoint equation}) containing Malliavin derivatives exists. In the future, we will consider establishing the existence and uniqueness of adapted $M$-solutions to this kind of equations and prove the general maximum principle.

\end{document}